\DeclareMathOperator{\tr}{tr}
\DeclareMathOperator{\dvol}{dV}
\DeclareMathOperator{\Ric}{Ric}
\DeclareMathOperator{\Rm}{Rm}
\newcommand{\cg}{\widetilde{g}}
\newcommand{\cf}{\widetilde{f}}
\newcommand{\cu}{\widetilde{u}}
\newcommand{\cv}{\widetilde{v}}
\newcommand{\cD}{\widetilde{D}}
\newcommand{\cI}{\widetilde{I}}
\newcommand{\cR}{\widetilde{R}}
\newcommand{\cnabla}{\widetilde{\nabla}}
\newcommand{\cDelta}{\widetilde{\Delta}}
\newcommand{\cmE}{\widetilde{\mathcal{E}}}
\newcommand{\cmG}{\widetilde{\mathcal{G}}}
\DeclareMathOperator{\cRm}{\widetilde{\Rm}}
\newcommand{\mC}{\mathcal{C}}
\newcommand{\mD}{\mathcal{D}}
\newcommand{\mE}{\mathcal{E}}
\newcommand{\mG}{\mathcal{G}}
\newcommand{\mI}{\mathcal{I}}
\newcommand{\mM}{\mathcal{M}}
\newcommand{\mP}{\mathcal{P}}
\newcommand{\mR}{\mathcal{R}}
\newcommand{\mS}{\mathcal{S}}
\newcommand{\kD}{\mathfrak{D}}
\newcommand{\kc}{\mathfrak{c}}
\newcommand{\bN}{\mathbb{N}}
\newcommand{\bR}{\mathbb{R}}
\newcommand{\RHS}{\operatorname{RHS}}
\newcommand{\LHS}{\operatorname{LHS}}
\def\sideremark#1{\ifvmode\leavevmode\fi\vadjust{\vbox to0pt{\vss
 \hbox to 0pt{\hskip\hsize\hskip1em
 \vbox{\hsize3cm\tiny\raggedright\pretolerance10000
 \noindent #1\hfill}\hss}\vbox to8pt{\vfil}\vss}}}
\newcommand{\suchthat}{\mathrel{}:\mathrel{}}
\newcommand{\ssfu}{{\ast}}
\newcommand{\ssfv}{{\diamond}}
\newtheorem{theorem}{Theorem}[section]
\newtheorem{proposition}[theorem]{Proposition}
\newtheorem{lemma}[theorem]{Lemma}
\newtheorem{corollary}[theorem]{Corollary}
\theoremstyle{definition}
\numberwithin{equation}{section}
\begin{document}

\title{Juhl type formulas for curved Ovsienko--Redou operators}

\author[S. Chern]{Shane Chern}
\address[S. Chern]{Fakult\"at f\"ur Mathematik \\ Universit\"at Wien \\ Vienna \\ A-1090 \\ Austria}
\email{chenxiaohang92@gmail.com}

\author[Z. Yan]{Zetian Yan}
\address[Z. Yan]{Department of Mathematics \\ UC Santa Barbara \\ Santa Barbara \\ CA 93106 \\ USA}
\email{ztyan@ucsb.edu}
\keywords{Conformally invariant operators, Juhl type formulas, Combinatorial identities, Hypergeometric series} 
\subjclass[2020]{Primary 58J70; Secondary 53A40, 33C20}

\begin{abstract}
We prove Juhl type formulas for the curved Ovsienko--Redou operators and their linear analogues, which indicate the associated formal self-adjointness, thereby confirming two conjectures of Case, Lin, and Yuan. We also offer an extension of Juhl's original formula for the GJMS operators.
\end{abstract}

\maketitle

\section{Introduction}\label{introduction}

The \emph{GJMS operator} of order $2k$ is a conformally invariant differential operator with leading-order term $\Delta^k$ defined on any Riemannian manifold $(M^n,g)$ of dimension $n \geq 2k$, and this family generalizes the well-known second-order conformal Laplacian (also called Yamabe operator) and the fourth-order operator discovered by Paneitz~\cite{Paneitz1983}. The GJMS operators have been studied intensively during the past decades in connection with, for example, prescribed $Q$-curvature problems, higher-order Sobolev trace inequalities, scattering theory on conformally compact manifolds, and functional determinant quotient formulas for pairs of metrics in a conformal class.

Based on a theory of residue families, in a series of works~\cite{Juhl2009,Juhl2013}, Juhl derived remarkable formulas that express GJMS operators as a sum of compositions of lower-order GJMS operators up to a certain second-order term or as linear combinations of compositions of second-order differential operators, through an ingenious inversion relation for compositions given credit to Krattenthaler~\cite[Theorem 2.1]{Juhl2013}. Later, Fefferman and Graham~\cite{FeffermanGraham2013} provided an alternative proof of Juhl's formulas, starting directly from the original construction on the ambient space but also requiring Krattenthaler's insight. Juhl's formulas have significant applications in the aforementioned study of GJMS operators, such as the asymptotic expansion of the heat kernel~\cite{Juhl2016} and prescribed higher-order $Q$-curvature problems~\cite{MazumdarVetois2020}.



To study conformally covariant operators of rank three, Case, Lin, and Yuan~\cite{CaseLinYuan2022or} gave two generalizations of the GJMS operators;
here we focus on those that are formally self-adjoint.
The first is a family of conformally invariant bidifferential operators:
\begin{equation*}
	D_{2k} \colon \mE\left[ -\frac{n-2k}{3} \right]^{\otimes 2} \to \mE\left[-\frac{2n+2k}{3}\right]
\end{equation*}
of total order $2k$.
They are called the \emph{curved Ovsienko--Redou operators} because they generalize a family of bidifferential operators constructed by Ovsienko and Redou~\cite{OvsienkoRedou2003} on the sphere. Let
\begin{align*}
	a_{r,s,t} := \binom{k}{r,s,t} \frac{\Gamma\bigl(\frac{n+4k}{6}-r\bigr)\Gamma\bigl(\frac{n+4k}{6}-s\bigr)\Gamma\bigl(\frac{n+4k}{6}-t\bigr)}{\Gamma\bigl(\frac{n-2k}{6}\bigr)\Gamma\bigl(\frac{n+4k}{6}\bigr)^2}
\end{align*}
with $\binom{k}{r,s,t}=\frac{k!}{r!s!t!}$ the multinomial coefficient wherein $k=r+s+t$. The operators $D_{2k}$ are determined ambiently by
\begin{align*}
	\cD_{2k}(\cu \otimes \cv)  := \sum_{r+s+t=k} a_{r,s,t} \cDelta^r\left( (\cDelta^s\cu)(\cDelta^t\cv) \right)
\end{align*}
on $\cmE\bigl[-\frac{n-2k}{3}\bigr] \otimes \cmE\bigl[-\frac{n-2k}{3}\bigr]$;
in this paper, tensor products are over $\bR$. The second generalization is a family of conformally invariant differential operators:
\begin{equation*}
	D_{2k;\mI} \colon \mE\left[ -\frac{n-2k-2\ell}{2} \right] \to \mE\left[-\frac{n+2k+2\ell}{2} \right]
\end{equation*}
of order $2k$ associated with a scalar Weyl invariant $\mI$ of weight $-2\ell$. These are determined ambiently by a scalar Riemannian invariant $\cI$ of weight $-2\ell$ as
\begin{align*}
	\cD_{2k;\cI}(\cu) & := \sum_{r+s=k} b_{r,s}\cDelta^r\left( \cI \cDelta^s\cu \right)
\end{align*}
on $\cmE\bigl[-\frac{n-2k-2\ell}{2}\bigr]$ where
\begin{align*}
	b_{r,s} := \binom{k}{s}\frac{\Gamma(\ell+s)\Gamma(\ell+r)}{\Gamma(\ell)^2}.
\end{align*}
We refer the reader to \cref{sec:bg} for an explanation of our notation and a description of how the ambient formulas determine conformally invariant operators.

In a recent work, Case and the second named author~\cite{CaseYan2024} proved the formal self-adjointness of the two families of operators, thereby answering in the affirmative two conjectures of Case, Lin, and Yuan~\cite{CaseLinYuan2022or}. Taking $D_{2k}$ as an example, their main idea is to factorize $\cD_{2k}$ on the Poincar\'e space and realize the Dirichlet form of $D_{2k}$ as the coefficient of the logarithmic term in the Dirichlet form of $\cD_{2k}$. The \emph{Divergence Theorem} then yields the desired formal self-adjointness. This method avoids the lack of either an equivalent description of these operators as an obstruction to solving some second-order PDE, on which the Graham--Zworski argument~\cite{GrahamZworski2003} highly relies, or the complicated combinatorial arguments required by Juhl in~\cite{Juhl2013}. 

However, the explicit structures of these two families of operators are not revealed through the arguments in \cite{CaseYan2024}, especially in view of Juhl's formulas for the GJMS operators \cite{Juhl2009,Juhl2013}. To understand the underlying formal self-adjointness for the operators $D_{2k}$ and $D_{2k;\mI}$ in a more direct way and to read the two operators from a more general setting, the main purpose of this paper is to prove the following two Juhl type formulas:

\begin{theorem}\label{JUhl-formula-ovsienko-redou}
	Let $(M^n,\kc)$ be a conformal manifold.
	Let $k \in \bN$ and if $n$ is even, we assume additionally that $k \leq \frac{n}{2}$. Writing $L_k:=\frac{n}{6}+\frac{2k}{3}$, the operator $D_{2k}$ satisfies
	\begin{align}
		D_{2k}(u\otimes v) & = \sum_{\mathbf{A}'}\sum_{\mathbf{A}^{\ssfu}}\sum_{\mathbf{A}^{\ssfv}} \mM_{2(A'_{r'}+1)}\cdots \mM_{2(A'_1+1)}\\
		&\ \quad \big(\mM_{2(A_{r^{\ssfu}}^{\ssfu}+1)}\cdots \mM_{2(A_1^{\ssfu}+1)}(u)\mM_{2(A_{r^{\ssfv}}^{\ssfv}+1)}\cdots \mM_{2(A_1^{\ssfv}+1)}(v)\big)\notag\\
		&\quad\times \frac{k!}{L_k^2}\prod_{n=1}^k (L_k-n)\prod_{i=1}^{r^{\ssfu}} \frac{1}{(A_{i}^{\ssfu}!)^2} \prod_{i=1}^{r^{\ssfv}} \frac{1}{(A_{i}^{\ssfv}!)^2} \prod_{i=1}^{r'} \frac{1}{(A'_{i}!)^2}\notag\\
		&\quad\times \prod_{i=1}^{r^\ssfu} \scalebox{0.75}{$\dfrac{1}{\sum\limits_{j=1}^i (A_{j}^\ssfu+1)}$}\prod_{i=1}^{r^\ssfu} \scalebox{0.75}{$\dfrac{1}{L_k-\sum\limits_{j=1}^i (A_{j}^\ssfu+1)}$} \prod_{i=1}^{r^\ssfv} \scalebox{0.75}{$\dfrac{1}{\sum\limits_{j=1}^i (A_{j}^\ssfv+1)}$}\prod_{i=1}^{r^\ssfv} \scalebox{0.75}{$\dfrac{1}{L_k-\sum\limits_{j=1}^i (A_{j}^\ssfv+1)}$}\notag\\
		&\quad\times \prod_{i=1}^{r'} \scalebox{0.75}{$\dfrac{1}{\sum\limits_{j=1}^i (A'_{r'+1-j}+1)}$} \prod_{i=1}^{r'} \scalebox{0.75}{$\dfrac{1}{L_k-\sum\limits_{j=1}^i (A'_{r'+1-j}+1)}$},\notag
	\end{align}
	where the summation runs over all nonnegative sequences $\mathbf{A}^{\ssfu}=(A_1^{\ssfu},\ldots,A_{r^{\ssfu}}^{\ssfu})$, $\mathbf{A}^{\ssfv}=(A_1^{\ssfv},\ldots,A_{r^{\ssfv}}^{\ssfv})$ and $\mathbf{A}'=(A'_1,\ldots,A'_{r'})$ such that
	\begin{align*}
		r^{\ssfu} + \sum\limits_{j=1}^{r^{\ssfu}} A_j^{\ssfu} + r^{\ssfv} + \sum\limits_{j=1}^{r^{\ssfv}} A_j^{\ssfv} + r' + \sum_{j=1}^{r'} A'_j = k.
	\end{align*}
	In particular, $D_{2k}$ is formally self-adjoint.
\end{theorem}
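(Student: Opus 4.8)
\emph{Outline of the argument.} The formula will be established in two stages — an ambient unfolding of $\cD_{2k}$ into nested compositions of the second-order-leading building blocks $\mM_{2m}$, followed by a combinatorial identification of the coefficients — after which formal self-adjointness is read off from the symmetry of the resulting expression. By conformal invariance we may compute $D_{2k}$ using any metric in $\kc$ and any homogeneous ambient extensions of $u$ and $v$. Putting the ambient metric (equivalently the associated Poincar\'e metric) in normal form renders $\cDelta$ graded in the defining coordinate, and, following the mechanism of Juhl and of Fefferman--Graham~\cite{Juhl2009,Juhl2013,FeffermanGraham2013}, iterating $\cDelta$ on a homogeneous density while extracting the relevant jet produces a nested composition of operators whose restrictions to $M$ are the blocks $\mM_{2m}$, the index $m$ being the half-order of the block absorbed at that step. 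Applying this to the three objects differentiated in $\cD_{2k}(\cu\otimes\cv)=\sum_{r+s+t=k}a_{r,s,t}\,\cDelta^{r}\big((\cDelta^{s}\cu)(\cDelta^{t}\cv)\big)$ — namely $\cu$, $\cv$, and their product — produces three chains: a $\cu$-chain $\mM_{2(A^{\ssfu}_{r^{\ssfu}}+1)}\cdots\mM_{2(A^{\ssfu}_1+1)}(u)$, a $\cv$-chain of the same shape, and a trunk $\mM_{2(A'_{r'}+1)}\cdots\mM_{2(A'_1+1)}$ applied to the product of the two, with lengths and indices constrained exactly by $r^{\ssfu}+\sum A^{\ssfu}_j+r^{\ssfv}+\sum A^{\ssfv}_j+r'+\sum A'_j=k$. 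The multinomial $\binom{k}{r,s,t}$ and the ratios of Gamma functions in $a_{r,s,t}$ are carried through and reappear as an explicit product of Pochhammer symbols attached to the three chains; the hypothesis $k\le n/2$ for $n$ even is the usual range condition ensuring that the blocks $\mM_{2m}$ for $m\le k$ are all defined on the relevant density bundles.

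It then remains to identify the coefficients. After summing over $(r,s,t)$ and over the orders in which the derivatives may be peeled off, the total coefficient attached to a fixed triple $(\mathbf A',\mathbf A^{\ssfu},\mathbf A^{\ssfv})$ of chains must be shown to equal the product over partial sums in the statement. I would prove this by induction on $k$: peeling the innermost factor of the $\cu$-chain (equivalently, one $\cDelta$ acting on $\cu$) lowers $k$ and leaves a sum of the same shape, reducing the claim to a one-step identity which, after clearing the Gamma functions, is a balanced hypergeometric summation of Pfaff--Saalsch\"utz type — and it is precisely the balancing condition that forces the indices in the trunk factors to be read in the reversed order $A'_{r'+1-j}$. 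This combinatorial identification is the heart of the proof and the step I expect to be the main obstacle: Juhl's (and Krattenthaler's) original inversion governs a single composition chain, whereas here two chains merge into a common trunk at a node of arbitrary weight, and controlling the interaction of the $\cu$-branch, the $\cv$-branch, and the trunk through that node — rather than the differential geometry — is where the hypergeometric machinery must be deployed carefully. (The same scheme with a single chain recovers, and extends, Juhl's formula for the GJMS operators.)

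Finally, formal self-adjointness is a consequence of the formula. The density pairing identifies the target bundle of $D_{2k}$ with the dual of $\mE[-\tfrac{n-2k}{3}]$, so the assertion is that the trilinear form $(u,v,w)\mapsto\int_M w\cdot D_{2k}(u\otimes v)$ on $\mE[-\tfrac{n-2k}{3}]^{\otimes 3}$ is totally symmetric. Symmetry under $u\leftrightarrow v$ is manifest from exchanging $\mathbf A^{\ssfu}$ and $\mathbf A^{\ssfv}$. For the remaining symmetry, each $\mM_{2m}$ occurring is formally self-adjoint (see \cref{sec:bg}), so integrating by parts moves the trunk composition off the product and onto $w$, reversing its order; passing to the reversed trunk index sequence as a new summation variable turns the partial sums $\sum_{j=1}^i(A'_{r'+1-j}+1)$ appearing in the trunk factors into the ordinary partial sums of that variable, so the full coefficient becomes a function of the three index sequences with the \emph{same} functional form in each slot, hence totally symmetric. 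Thus the trilinear form is symmetric, which is the stated formal self-adjointness; in particular this recovers, by a different route, the self-adjointness theorem of \cite{CaseYan2024}.
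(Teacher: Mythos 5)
Your outline follows the same broad skeleton as the paper (ambient expansion of $\cD_{2k}$ into $\mR$/$\mM$-compositions, Pfaff--Saalsch\"utz machinery for the coefficients, self-adjointness read off from the symmetry of the resulting expression), and your final step is sound: after reversing the trunk using formal self-adjointness of the $\mM_{2m}$'s, the coefficient factors as a symmetric prefactor times the same function of each of the three index sequences, so the trilinear form is totally symmetric. The gap is in what you yourself call the heart of the proof. The induction on $k$ you propose --- peel one $\cDelta$ off the $\cu$-chain and claim the remainder is ``a sum of the same shape'' --- does not close within the family $\{D_{2k}\}$: removing one Laplacian shifts the homogeneity weight of the argument, the parameter $L_k$, and the Gamma-ratios $a_{r,s,t}$ in mutually incompatible ways, and the terms with $s=0$ are left over, so the inductive hypothesis (the theorem at $k-1$) is simply not applicable to what remains. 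This is precisely the obstruction the paper identifies with \eqref{eq:D-2k}: because the inner layer is twisted, no Fefferman--Graham-type recursion (nor a naive induction on $k$) exists for $D_{2k}$ itself. The paper's remedy, which your proposal lacks, is to embed $D_{2k}$ into the larger family $D_{U,V,L,K^{\ssfu},K^{\ssfv}}$ of \eqref{eq:D-general} evaluated on $\rho^{N^{\ssfu}}u\otimes\rho^{N^{\ssfv}}v$, with $V$, $K^{\ssfu}$, $K^{\ssfv}$, $N^{\ssfu}$, $N^{\ssfv}$ as free parameters, prove the closed formula there (\cref{th:D-2k-general}), and only then specialize via $D_{2k}=\frac{1}{k!}D_{k,0,L_k,0,0}$. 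Nor is the combinatorial content a single one-step balanced ${}_3F_2$: the paper first evaluates the inner and outer layers in closed form (\cref{le:S(A,B)}, \cref{coro:D-u}), and then the sum over the merge data at the node requires the two-parameter identity \eqref{eq:aux-sum-2}, applied \emph{twice} (once to absorb the $\ssfv$-branch into the trunk, once for the $\ssfu$-branch), and that identity is itself proved by induction on the length of the sequence with a Pfaff--Saalsch\"utz evaluation at each step. Until you formulate a generalized statement with enough free parameters for some induction or closed-form evaluation to close, and prove the corresponding multi-sum identity, the coefficient identification --- and hence the theorem --- is not established.

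A smaller correction: the hypothesis $k\le\frac{n}{2}$ for $n$ even is not a range condition for the blocks $\mM_{2m}$ to be defined; it is the condition (via \cref{dependence}) guaranteeing that $\cD_{2k}$, and hence $D_{2k}$, is independent of the ambiguity of the ambient metric, as the paper states immediately after the two main theorems.
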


\begin{theorem}\label{JUhl-formula-linear}
	Let $(M^n,\kc)$ be a conformal manifold and let $\cI \in \cmE[-2\ell]$ be an ambient scalar Riemannian invariant. Let $k \in \bN$ and if $n$ is even, we assume additionally that $k+\ell\le \frac{n}{2}+1-\delta_{0,\ell}$ with $\delta$ the Kronecker delta. Then the operator $D_{2k;\mI}$ satisfies
	\begin{align}
		D_{2k;\mI}(u)& = \sum_{R}\sum_{\mathbf{A},\mathbf{A}'}\mM_{2(A'_{r'}+1)}\cdots \mM_{2(A'_1+1)} \big(\cI^{(R)}\mM_{2(A_r+1)}\cdots \mM_{2(A_1+1)}(u)\big)\\
		&\quad\times (-1)^{R}\, 2^{R}\, \frac{k!}{R!}\, \prod_{n=0}^{k-1} (\ell+n)^2 \prod_{i=1}^r \frac{1}{(A_{i}!)^2} \prod_{i=1}^{r'} \frac{1}{(A'_{i}!)^2}\notag\\
		&\quad\times \prod_{i=1}^{r} \scalebox{0.75}{$\dfrac{1}{\sum\limits_{j=1}^i (A_{j}+1)}$}\prod_{i=1}^{r} \scalebox{0.75}{$\dfrac{1}{\ell+k-\sum\limits_{j=1}^i (A_{j}+1)}$}\notag\\
		&\quad\times \prod_{i=1}^{r'} \scalebox{0.75}{$\dfrac{1}{\sum\limits_{j=1}^i (A'_{r'+1-j}+1)}$}\prod_{i=1}^{r'} \scalebox{0.75}{$\dfrac{1}{\ell+k-\sum\limits_{j=1}^i (A'_{r'+1-j}+1)}$},\notag
	\end{align}
	where the summation runs over all nonnegative integers $R$ and all sequences $\mathbf{A}=(A_{1},\ldots,A_{r})$ and $\mathbf{A}'=(A'_1,\ldots,A'_{r'})$ of nonnegative integers such that
	\begin{align*}
		R + r + \sum\limits_{j=1}^r A_j + r' + \sum_{j=1}^{r'} A'_j  = k.
	\end{align*}
	In particular, $D_{2k;\mI}$ is formally self-adjoint.
\end{theorem}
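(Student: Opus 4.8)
The plan is to prove both operator identities ambiently — on the Fefferman--Graham ambient space, equivalently on the associated Poincar\'e space $(X^{n+1},g_+)$ as in \cite{CaseYan2024} — and then to read off formal self-adjointness as an immediate consequence of the symmetry that is manifest in the resulting closed forms. There are two ingredients. The first is a \emph{reduction step}: one factors the ambient operator $\cD_{2k}$ (resp.\ $\cD_{2k;\cI}$) on the Poincar\'e space into compositions of the building blocks $\mM_{2m}$ of \cref{sec:bg} with explicit rational prefactors, and realizes $D_{2k}$ (resp.\ $D_{2k;\mI}$) as the coefficient of the logarithmic term in that factored expansion, in the spirit of Graham--Zworski and of \cite{CaseYan2024}. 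The second is a \emph{combinatorial identity}: those prefactors must be shown to collapse to the ones asserted in the theorems. For fixed $k$ both sides are differential operators whose coefficients are universal polynomials in the curvature, rational in $n$ and in $\ell$, so it suffices to prove the identities for $n$ in the range where the linear forms $L_k-\sum_{j\le i}(A_j+1)$ (resp.\ $\ell+k-\sum_{j\le i}(A_j+1)$) never vanish; the boundary cases $k=\tfrac n2$ and $k+\ell=\tfrac n2+1-\delta_{0,\ell}$ then follow by analytic continuation in $n$, the apparent configuration-dependent poles on the right being cancelled by the vanishing of the global factor $\prod_{n=1}^k(L_k-n)$ (resp.\ $\prod_{n=0}^{k-1}(\ell+n)^2$).

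For the reduction step one expands the harmonic extension of a density as a formal power series in a defining function $r$; applying a power $\cDelta^s$ and solving the indicial equation at each order produces the order-$2m$ building blocks $\mM_{2m}$ together with prefactors that are reciprocals of the indicial linear forms — precisely the factors $1/\sum_{j\le i}(A_j+1)$ and $1/(L_k-\sum_{j\le i}(A_j+1))$ — while the Gamma-quotients inside $a_{r,s,t}$ (resp.\ $b_{r,s}$) contribute the squares of factorials $1/(A_i!)^2$ together with the global $\frac{k!}{L_k^2}\prod_{n=1}^k(L_k-n)$ (resp.\ $(-1)^R2^R\frac{k!}{R!}\prod_{n=0}^{k-1}(\ell+n)^2$), the global factor also absorbing the residue normalization from the logarithmic-term extraction. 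In \cref{JUhl-formula-ovsienko-redou} there are three such chains: one acting on each argument $u,v$, read left to right (whence the sequences $\mathbf{A}^{\ssfu},\mathbf{A}^{\ssfv}$ and the forward partial sums), and one acting on the output after the product is formed, read right to left (whence $\mathbf{A}'$ and the reversed indices $A'_{r'+1-j}$); the constraint $r^{\ssfu}+\sum A_j^{\ssfu}+r^{\ssfv}+\sum A_j^{\ssfv}+r'+\sum A'_j=k$ merely records that the total order is $2k$. In \cref{JUhl-formula-linear} there are only two chains, plus an insertion of the $R$-th ambient lift $\cI^{(R)}$ of the invariant produced by commuting $\cDelta$ past $\cI$ and symmetrizing over the $R$ insertion slots, which accounts for the extra sum over $R$ and the factor $(-1)^R2^Rk!/R!$.

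With the reduction carried out, each theorem reduces to the claim that, after one substitutes the Poincar\'e-space expansion of $\mM_{2m}$ in powers of $\cDelta$, the triple (resp.\ double) sum on the right collapses to $\sum_{r+s+t=k}a_{r,s,t}\,\cDelta^r\big((\cDelta^s\cdot)(\cDelta^t\cdot)\big)$ (resp.\ $\sum_{r+s=k}b_{r,s}\,\cDelta^r(\cI\cDelta^s\cdot)$). This is a tree-shaped generalization of the inversion relation Krattenthaler supplied for Juhl's original formula \cite[Theorem 2.1]{Juhl2013}: the triangular system sending a chain $\mM_{2(A_r+1)}\cdots\mM_{2(A_1+1)}$ to the corresponding $\cDelta$-data must be inverted with the stated entries, and the two or three resulting single-chain inversions must then be glued, via a terminating Pfaff--Saalsch\"utz or Chu--Vandermonde evaluation, into the single Gamma-quotient block encoded by $a_{r,s,t}$ (resp.\ $b_{r,s}$). \textbf{This combinatorial identity is the main obstacle}: tracking all the Gamma-quotients, and making the reversal of the output chain $\mathbf{A}'$ compatible with the forward-read argument chains, is exactly what pins down the precise shape of the coefficients, and it is here that the hypergeometric-summation machinery is indispensable. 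An alternative is an induction on $k$ using the recursion relating $\cD_{2k}$ to the lower operators $\cD_{2j}$, verifying that the proposed formula satisfies it; this trades the closed-form summation for a more intricate induction but relies on the same family of identities.

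Formal self-adjointness is then a corollary. Each $\mM_{2m}$ is formally self-adjoint (\cref{sec:bg}), so pairing the right side of \cref{JUhl-formula-ovsienko-redou} against a density $w$ of the complementary weight and integrating, the Divergence Theorem moves the outer chain $\mM_{2(A'_{r'}+1)}\cdots\mM_{2(A'_1+1)}$ onto $w$ and reverses it; since the combinatorial coefficient is invariant under the swap $\mathbf{A}^{\ssfu}\leftrightarrow\mathbf{A}^{\ssfv}$ and is written so that the $\mathbf{A}'$-factors, read in reverse, coincide with the $\mathbf{A}^{\ssfu}$- and $\mathbf{A}^{\ssfv}$-factors read forward, the resulting top-density is symmetric under every permutation of $u,v,w$; hence $D_{2k}$ is formally self-adjoint. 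The same computation applied to \cref{JUhl-formula-linear}, which after pairing is symmetric under $u\leftrightarrow w$, yields formal self-adjointness of $D_{2k;\mI}$. This reproves the two conjectures of \cite{CaseLinYuan2022or} with explicit constants, complementing the Dirichlet-form argument of \cite{CaseYan2024}.
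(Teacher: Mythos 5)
There is a genuine gap: your proposal defers exactly the step that constitutes the proof. You correctly identify that everything hinges on an explicit combinatorial evaluation of the coefficients, but you then flag it as ``the main obstacle'' and only gesture at a Krattenthaler-style inversion glued by Pfaff--Saalsch\"utz/Chu--Vandermonde, without producing the identity or a mechanism to prove it. The paper's argument lives entirely in that step: starting from the ambient formula \eqref{eq:D-2k-I} (not from a Poincar\'e-space log-term extraction), it splits the inner and outer $\mR$-chains, evaluates $\mathcal{S}_{\mathbf{A},\mathbf{B},M,L}(\rho^N)$ in closed form, and handles the inserted invariant via \cref{le:S(A,B)-f}, where the coefficients of $f^{(R)}$ are extracted by testing against $f=\frac{1}{R!}(\rho-1)^R$; this feeds into \cref{D-f-u}, and the sum over the intermediate $\mathbf{B}$-data and over the split $M+M'=U$ is then collapsed by the appendix identity \eqref{eq:aux-sum-2}, proved by induction with Pfaff--Saalsch\"utz. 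None of this structure appears in your outline, and in particular your account of where the $R$-sum and the factor $(-1)^R2^Rk!/R!$ come from (``commuting $\cDelta$ past $\cI$ and symmetrizing over insertion slots'') does not match any computation you have set up, nor the actual bookkeeping (the $2^R$ and sign come from the $-\rho/2$ normalization in $\widetilde{\mM}(\rho)$ together with the $\rho$-Taylor expansion of $\cI$ along the outer chain).

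Two further points make the proposed framework itself doubtful. First, the reduction step you describe --- harmonic extension on the Poincar\'e space, indicial equations, and reading $D_{2k;\mI}$ off as the coefficient of a logarithmic term --- is not available here at the operator level: the absence of a characterization of these operators as obstructions to solving a second-order problem is precisely why \cite{CaseYan2024} worked with Dirichlet forms (whose log coefficient they could identify) rather than with the operators, and why the present paper computes directly from \eqref{eq:D-2k-I}; you would need to establish such a characterization before your reduction step even starts. Second, your fallback ``induction on $k$ via the recursion relating $\cD_{2k}$ to lower operators'' is the Fefferman--Graham recursion strategy, and the paper explains that the inserted invariant $\cI$ (and, for $D_{2k}$, the twisted inner layer) destroys exactly those recursions; an essential ingredient of the actual proof is the introduction of the extra free parameters $U,V,L,K$ in $D_{U,V,L,K;f}$ (\cref{generalized-Juhl-linear-formula}) so that an induction becomes possible at all, and the self-adjoint specialization $K=0$, $N=0$, $V=U$ is only taken at the end. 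Your self-adjointness paragraph is fine once the formula is in hand, but as it stands the proposal is a plan whose central identity and whose reduction to that identity are both unestablished.
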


In the above theorems, the additional condition for the even $n$ case is imposed to ensure that the operators $D_{2k}$ and $D_{2k;\mI}$, respectively, are independent of the ambiguity of the ambient metric. This will be discussed in Section~\ref{sec:bg}.

\subsection{Outline of the idea}

\subsubsection{Fefferman and Graham's argument}

In~\cite{FeffermanGraham2013}, Fefferman and Graham gave a direct proof of Juhl's formula for the GJMS operators, starting from the original construction on the ambient space. In this subsection, we sketch their main idea and show how to improve their arguments to work with the differential operators $D_{2k}$ and $D_{2k;\mI}$.

Given a Riemannian manifold $(M^n,g)$, let $(\cmG,\cg)$ be its straight and normal ambient space defined in \eqref{eqn:straight-and-normal}. We set 
\begin{equation*}
	w(\rho):=\left(\frac{\det g_{\rho}}{\det g}\right)^{\frac{1}{4}},
\end{equation*}
and denote $\cDelta_{w}:=w\circ \cDelta_{g_{\rho}} \circ w^{-1}$. Since the construction of GJMS operators $P_{2k}$ is independent of the extension of $u$ on $M^n\times (-\epsilon, \epsilon)$, Fefferman and Graham reformulated them via $\cDelta_{w}$ instead of $\cDelta_{g_{\rho}}$. Through a direct computation~\cite[eq.~(2.4)]{FeffermanGraham2013}, one has, for $u=u(x,\rho)$,
\begin{equation}\label{Laplacian-expansion}
	\cDelta_w(\tau^{\gamma}u)=\tau^{\gamma-2}\left[-2\rho \partial^2_{\rho}+(2\gamma+n-2)\partial_{\rho}+\widetilde{\mM}(\rho)\right]u.
\end{equation}
It is notable that $\widetilde{\mM}(\rho)$ is a second-order, formally self-adjoint operator on $(M^n,g)$ for each $\rho\in \bR$ and we may regard it as the generating function
\begin{align*}
	\widetilde{\mathcal{M}}(\rho) := \sum_{N\ge 0} \frac{1}{\big(N!\big)^2}\left(-\frac{\rho}{2}\right)^N \mM_{2(N+1)}
\end{align*}
for a family of second-order, formally self-adjoint operators $\{\mM_{2(N+1)}\}_{N\in \bN}$ on $(M^n,g)$. Now setting
\begin{align*}
	\mathcal{R}_j := -2\rho\partial_\rho^2 + 2j \partial_\rho + \widetilde{\mathcal{M}}(\rho),
\end{align*}
we may formulate the GJMS operators $P_{2k}$ as
\begin{align}
	P_{2k} (u) := \mR_{1-k}\mR_{3-k}\cdots \mR_{k-3}\mR_{k-1} (u)\Big\vert_{\rho=0}.
\end{align}
With recourse to a nice combinatorial argument, Fefferman and Graham proved the following result~\cite[eq.~(2.5)]{FeffermanGraham2013}:

\begin{theorem}
	\label{Juhl-formula}
	For $k\geq 1$ and additionally $k\leq \frac{n}{2}$ if $n$ is even, 
	\begin{align}\label{eq:FG-formula}
		P_{2k}(u) &= \sum_{\mathbf{A}}  \mM_{2(A_r+1)}\cdots \mM_{2(A_1+1)} (u)\\
		&\quad \times \big((k-1)!\big)^2 \prod_{i=1}^{r}\frac{1}{\big(A_i!\big)^2} \prod_{i=1}^{r-1} \scalebox{0.75}{$\dfrac{1}{\sum\limits_{j=1}^{i} (A_{j}+1)}$} \prod_{i=1}^{r-1} \scalebox{0.75}{$\dfrac{1}{\sum\limits_{j=1}^i (A_{r+1-j}+1)}$},\notag
	\end{align}
	where the summation runs over all sequences $\mathbf{A}=(A_{1},\ldots,A_{r})$ of nonnegative integers such that
	\begin{align*}
		\sum_{i=1}^r (A_{i}+1) = k.
	\end{align*}
	In particular, $P_{2k}$ is formally self-adjoint.
\end{theorem}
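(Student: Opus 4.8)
The plan is to run Fefferman and Graham's argument directly on the reformulation $P_{2k}(u)=\mR_{1-k}\mR_{3-k}\cdots\mR_{k-1}(u)\big\vert_{\rho=0}$: expand the $k$‑fold composition, extract the coefficient of $\rho^{0}$, and reduce the resulting scalar to a closed combinatorial sum. First I would split each recursion operator as $\mR_j=\mT_j+\widetilde{\mM}(\rho)$, where $\mT_j:=-2\rho\partial_\rho^2+2j\partial_\rho$. On a monomial $\rho^{m}f$ with $f=f(x)$ independent of $\rho$ one finds $\mT_j(\rho^{m}f)=2m(j+1-m)\,\rho^{m-1}f$ and $\widetilde{\mM}(\rho)(\rho^{m}f)=\sum_{N\ge0}\tfrac{(-1/2)^{N}}{(N!)^{2}}\,\rho^{m+N}\mM_{2(N+1)}f$; thus $\mT_j$ lowers the $\rho$‑degree by one, while the $N$‑th term of $\widetilde{\mM}(\rho)$ raises it by $N$ and appends $\mM_{2(N+1)}$ on the left.

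Next I would apply $\mR_{k-1},\mR_{k-3},\dots,\mR_{1-k}$ to $u$ (of $\rho$‑degree $0$) and expand each $\mR$ into its two pieces, obtaining a sum over ``words'' of length $k$ recording, at each step, whether the $\mT$‑piece or the $\widetilde{\mM}$‑piece was used and, in the latter case, which monomial $\rho^{N}\mM_{2(N+1)}$ was taken. Since $\mT_j$ kills $\rho$‑degree $0$, the first step is forced to use $\widetilde{\mM}$. A word using the $\widetilde{\mM}$‑piece $r$ times, with indices $A_1,\dots,A_r$ in order of application, contributes a scalar times $\mM_{2(A_r+1)}\cdots\mM_{2(A_1+1)}(u)$; the $\widetilde{\mM}$‑steps raise the total $\rho$‑degree by $\sum A_i$ while the $k-r$ $\mT$‑steps lower it by $k-r$, so restriction to $\rho=0$ forces $\sum_{i=1}^{r}(A_i+1)=k$, exactly the index set of the theorem. (For $n$ even, $k\le n/2$ is what makes the straight and normal ambient metric, hence $P_{2k}$, well defined; cf.\ \cref{sec:bg}.) Fixing $\mathbf A=(A_1,\dots,A_r)$ and writing $B_i:=(A_1+1)+\dots+(A_i+1)$ (so $B_0=0$, $B_r=k$), a short bookkeeping shows that the $\mT$‑letter occurring in position $p$ after the $i$‑th $\widetilde{\mM}$‑letter acts at $\rho$‑degree $m=B_i-p+1$, hence carries the factor $2m(j_p+1-m)=2(B_i-p+1)(B_r-B_i-p+1)$, with $j_p$ the index of the recursion operator applied at step $p$. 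Summing over the ways to interleave the $r$ labelled $\widetilde{\mM}$‑letters with the $k-r$ $\mT$‑letters (interleavings that apply $\mT$ at degree $0$ contribute $0$ automatically), the coefficient of $\mM_{2(A_r+1)}\cdots\mM_{2(A_1+1)}(u)$ equals $\prod_i(A_i!)^{-2}$ times $(-1)^{k-r}$ times a sum of products of these linear factors.

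The \textbf{crux} is then the combinatorial identity
\[
(-1)^{k-r}\sum_{\text{interleavings}}\ \prod_{\mT\text{-letters}} m\,(j_p+1-m)\;=\;\big((k-1)!\big)^{2}\prod_{i=1}^{r-1}\frac{1}{B_i}\prod_{i=1}^{r-1}\frac{1}{B_r-B_{r-i}},
\]
the two products on the right being the forward and backward partial‑sum products in the theorem. I would establish this via the composition‑inversion identity of Krattenthaler (the device used by Juhl and by Fefferman–Graham), or, alternatively, by induction on $r$: peeling off the last $\widetilde{\mM}$‑letter together with its trailing block of $\mT$‑letters collapses the ``outer'' factors to a one‑dimensional telescoping sum producing the relevant reciprocal partial‑sum factor. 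The base case $r=1$ (operator part $\mM_{2k}(u)$, a single contributing word: one $\widetilde{\mM}$‑letter followed by $k-1$ $\mT$‑letters) is a direct check in which the product of $\mT$‑factors is $(-1)^{k-1}\big((k-1)!\big)^{2}$, so the total coefficient is $1$.

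Finally, formal self‑adjointness is immediate: each $\mM_{2(N+1)}$ is formally self‑adjoint on $(M^n,g)$, so $P_{2k}^{\ast}$ is the same sum with every composition $\mM_{2(A_r+1)}\cdots\mM_{2(A_1+1)}$ transposed, and the coefficient is invariant under $\mathbf A\mapsto(A_r,\dots,A_1)$ (which merely swaps the two partial‑sum products), whence $P_{2k}^{\ast}=P_{2k}$. I expect the displayed identity to be the only genuine obstacle — Steps as above are bookkeeping, and the vanishing $\mT$‑factors do real work by annihilating many interleavings, which is precisely why a nontrivial hypergeometric/inversion input is needed. The same identity is the engine behind \cref{JUhl-formula-ovsienko-redou} and \cref{JUhl-formula-linear}: for $D_{2k}$ the single strand of $\mM$'s is replaced by three coupled strands — one each for $u$, for $v$, and for the outer ambient Laplacians $\cDelta^r$ — producing the triple sum over $\mathbf A^{\ssfu},\mathbf A^{\ssfv},\mathbf A'$, while for $D_{2k;\mI}$ one extra ``$\cI$‑insertion'' step contributes the Taylor coefficient $\cI^{(R)}$ and the prefactor $(-1)^{R}2^{R}k!/R!$; in both cases the same evaluation closes the argument.
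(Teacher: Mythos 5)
Your reduction is correct and is in substance the same as the paper's: splitting each $\mR_j$ into $2\mD_j$ plus the terms of $\widetilde{\mM}(\rho)$ and tracking $\rho$-degrees is exactly the paper's expansion into the words $\mS_{\mathbf{A},\mathbf{B}}$ of \eqref{eq:SAB-def}; your degree count forcing $\sum_i(A_i+1)=k$, the automatic vanishing of words that apply a $\mD$ at degree $0$, and the factor $2m(j_p+1-m)$ with $m=B_i-p+1$ and $j_p=k+1-2p$ all reproduce \cref{le:S(A,B)} and \eqref{eq:S(A,B)}, and the self-adjointness conclusion from the reversal symmetry of the coefficient is fine. So everything up to and including the identification of the ``crux'' identity matches the paper.

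The genuine gap is that the crux identity is never proved, and both routes you offer for it are inadequate as stated. Route one, ``via Krattenthaler's composition-inversion identity,'' is a pointer to the literature rather than an argument: showing how that inversion applies to your interleaving sum is precisely the substantive content of Juhl's and Fefferman--Graham's proofs (Fefferman--Graham must first derive their recursions before inverting), and your write-up performs none of that. Route two, induction on $r$ by peeling off the last $\widetilde{\mM}$-letter and its trailing $\mT$-block, does not close on the identity you wrote: the remaining word is a truncated composition $\mR_{k-1}\,,\mR_{k-3},\dots$ of length smaller than $k$ whose indices are \emph{not} those of a lower-order GJMS composition, so the inner sum is not an instance of your identity with a smaller $k$; the inductive statement must be strengthened to a family carrying a free parameter, which is exactly the role of the indeterminate $X$ in the paper's identity \eqref{eq:aux-sum-1} (equivalently, of the two-parameter operators $D_{M,L}$). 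Moreover, the single sum left after peeling is not a ``telescoping sum'': already for $r=2$ it is a terminating Saalsch\"utzian ${}_3F_2(1)$, and the paper evaluates it with the Pfaff--Saalsch\"utz formula \eqref{eq:P-S}. The paper's actual proof of \cref{Juhl-formula} is the specialization $N=0$, $L=M=k$ of \cref{coro:D-u}, whose inner $\mathbf{B}$-sum is closed by \eqref{eq:aux-sum-1}; supplying that generalized identity (or an equivalent hypergeometric evaluation) is the step your proposal is missing.
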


Here the main problem is to compute the coefficient for each composition of $\mM$-operators on the right-hand side of \eqref{eq:FG-formula}. To do so, we fix the $\mM_{2(A_r+1)}\cdots \mM_{2(A_1+1)}$ to be worked with and start with a truncated composition $\mR_{k+1-2i}\cdots \mR_{k-3}\mR_{k-1}$ with $1\leq i\leq k$. To produce the desired $\mM$-composition, we shall look at terms containing a $\mM$-truncation $\mM_{2(A_j+1)}\cdots \mM_{2(A_1+1)}$ with $1\le j\le r$ in the expansion of our selected truncated composition of $\mR$-operators. These terms are multiples of the $\mM$-truncation times a power of $\rho$. Note that amongst the $\mR$-operators in the truncated composition, it is clear that the zeroth term $\widetilde{\mM}(\rho)$ has contributed exactly $j$ times and the differentiation on $\rho$ has contributed $i-j$ times, thereby implying that the aforementioned power of $\rho$ has an exponent $\sum_{m=1}^j (A_{m}+1)-i$. Finally, to compute the coefficients associated with the $\mM$-truncations in question, Fefferman and Graham cleverly showed that these coefficients satisfy a family of recursive relations~\cite[eq.~(3.5)]{FeffermanGraham2013} by a remarkable combinatorial argument, and hence arrived at \eqref{eq:FG-formula} by solving these recursions.


When it comes to the curved Ovsienko--Redou operators $D_{2k}$, it can be shown that
\begin{align}\label{eq:D-2k}
	&D_{2k}(u \otimes v)\\
	&\qquad= \sum_{r+s+t=k} a_{r,s,t} \mR_{-L_k+1}\cdots \mR_{-L_k+2r-3}\mR_{-L_k+2r-1}\notag\\
	&\qquad\quad \big( \mR_{L_k+1-2s}\cdots \mR_{L_k-3}\mR_{L_k-1} (u)\mR_{L_k+1-2t}\cdots \mR_{L_k-3}\mR_{L_k-1} (v)\big)\Big\vert_{\rho=0},\notag
\end{align}
where we set $L_k:=\frac{n}{6}+\frac{2k}{3}$. However, this time we cannot proceed with the argument of Fefferman and Graham since an analogous family of recursions becomes out of reach, mainly due to the twisted inner layer of compositions. Likewise, for the operator $D_{2k;\mI}$, we have, as formulated in~\cite[eq.~(6.2)]{CaseLinYuan2022or},
\begin{align}\label{eq:D-2k-I}
	D_{2k;\mI}(u)&=\sum_{r+s=k} b_{r,s} \mR_{k-\ell-2s+1-2r} \cdots \mR_{k-\ell-2s-3}\mR_{k-\ell-2s-1}\\
	&\quad\big( \cI \mR_{k+\ell-2s+1}\cdots \mR_{k+\ell-3}\mR_{k+\ell-1} (u)\big)\Big\vert_{\rho=0}. \notag
\end{align}
The inserted ambient scalar Riemannian invariant $\cI$ also kills the expected recursive relations.

\subsubsection{Casting the ``Diffindo'' charm}\label{sec:diffindo-intro}

To overcome the issue caused by the lack of necessary recursions, we need to cast the charm of \emph{Diffindo}\footnote{This spell means making seams split open and severing an object into two pieces.} for the two operators $D_{2k}$ and $D_{2k;\mI}$. That is, we shall separate the analyses of the inner and outer layers of operator compositions.

We begin with the inner layer. Let $u\in \mC^{\infty}(M^n)$. For $M\in\bN$ and $L\in \bR$, we consider
\begin{align*}
	\cD_{M,L}(u) := \mR_{L+1-2M} \cdots \mR_{L-3} \mR_{L-1} (u).
\end{align*}
It is notable that $\cD_{M,L}$ reduces to the GJMS operator $P_{2k}$ by choosing $L=M=k$ and taking $\rho=0$. By the definition of the $\mR$-operators, we see that for each $N\in \bN$, the expansion of $\cD_{M,L}(\rho^N u)$ is a linear combination of a nonnegative power of $\rho$ times a composition of $\mM$-operators acted on $u$. The takeaway from our analysis is that by using an evaluation of a hypergeometric series (instead of looking for recursions), the coefficients in this linear combination can be explicitly expressed, as shown in Corollary \ref{coro:D-u}.

Next, we continue with the outer layer. Note that the essential contributions of the inner layer are of the form $\rho^N$ for $D_{2k}$ and $\rho^N \cI$ for $D_{2k;\mI}$ where $N\in \bN$. The analysis for the former can be copied from that for the inner layer, while the study of the latter, as given in Corollary \ref{D-f-u}, is much more complicated, relying on a trick for Lemma \ref{le:S(A,B)-f}.

To finalize our arguments, we shall look not only at the operators $D_{2k}$ and $D_{2k;\mI}$, but also at their generalizations with a few more free parameters added, as given in \eqref{eq:D-general} and \eqref{generalized-linear-operator}, respectively. The main advantage of these free parameters is that the application of induction becomes possible. By further utilizing the combinatorial identities shown in Appendix \ref{sec:appendix}, we finally arrive at the explicit expressions of the two families of generalized operators presented in Theorems \ref{th:D-2k-general} and \ref{generalized-Juhl-linear-formula}. In particular, as pointed out in Corollaries \ref{formally-self-adjointness-2} and \ref{formal-self-adjointness}, the nature of formal self-adjointness of the operators $D_{2k}$ and $D_{2k;\mI}$ is \emph{exclusive} among the two generic families, making our Juhl type formulas more meaningful.

\subsubsection{Notation and basic properties}

To facilitate our analysis, we split the $\mR$-operators by defining for $f=f(\rho)$:
\begin{align*}
	\mD_j(f) &:= \big({-\rho}\partial_\rho^2 + j \partial_\rho\big)(f),\\
	\mP_k(f) &:= \rho^k f,
\end{align*}
where $j\in \bR$ and $k\in \bN$.

Let $u\in \mC^{\infty}(M^n)$. For the inner layer, we have introduced the operators $\cD_{M,L}$ for $M\in\bN$ and $L\in \bR$:
\begin{align}
	\cD_{M,L}(u) &:= \mR_{L+1-2M} \cdots \mR_{L-3} \mR_{L-1} (u).
\end{align}
Meanwhile, we define
\begin{align}\label{eq:D-M}
	D_{M,L}(u):= \cD_{M,L}(u)\Big\vert_{\rho=0}.
\end{align}
If we expand $\cD_{M,L}$ in terms of the operators $\mD$ and $\mP$, then all its terms are of the form
\begin{align}\label{eq:R-reduced}
	\mR^*_{L+1-2M} \cdots \mR^*_{L-3} \mR^*_{L-1} (u),
\end{align}
where $\mR^*_{L+1-2j}$ takes either $2\mD_{L+1-2j}$ or $\frac{1}{(k!)^2}(-\frac{1}{2})^k \mM_{2(k+1)}\mP_k$.
Hence, we may record the terms in the expansion of $\cD_{M,L}$ as
\begin{align}\label{eq:SAB-def}
	\mathcal{S}_{\mathbf{A},\mathbf{B}}(u) = \mathcal{S}_{\mathbf{A},\mathbf{B},M,L}(u) := \underbrace{\mD\cdots \mD}_{\text{$B_r$ times}} \mP_{A_\ell} \cdots \underbrace{\mD\cdots \mD}_{\text{$B_1$ times}} \mP_{A_1} \underbrace{\mD\cdots \mD}_{\text{$B_0$ times}}(u),
\end{align}
where $\mathbf{A}=(A_1,\ldots,A_r)$ and $\mathbf{B}=(B_0,B_1,\ldots,B_r)$ are sequences of \emph{nonnegative} integers with the length of $\mathbf{B}$ one more than that of $\mathbf{A}$ such that
\begin{align}\label{eq:B-cond}
	r + \sum_{j=0}^r B_j = M.
\end{align}
It is also notable that the omitted index of the $\mD$-operators should be determined by its position in $\mS_{\mathbf{A},\mathbf{B}}(u)$. Meanwhile, in the expansion of $\cD_{M,L}$, we need to attach to the term $\mS_{\mathbf{A},\mathbf{B}}(u)$ a coefficient:
\begin{align}\label{eq:S-prefactor}
	&2^{B_r} \cdot \frac{1}{(A_{r}!)^2}(-\tfrac{1}{2})^{A_{r}} \mM_{2(A_{r}+1)} \cdots 2^{B_1} \cdot \frac{1}{(A_{1}!)^2}(-\tfrac{1}{2})^{A_{1}} \mM_{2(A_{1}+1)}\cdot 2^{B_0} \\
	&\qquad\quad = (-1)^{\sum\limits_{j=1}^r A_j}\cdot 2^{-\sum\limits_{j=1}^r A_j + \sum\limits_{j=0}^r B_j}\cdot \prod_{i=1}^r \frac{1}{(A_{i}!)^2} \cdot \mM_{2(A_r+1)}\cdots \mM_{2(A_1+1)}.\notag
\end{align}

To study the curved Ovsienko--Redou operators $D_{2k}$, we look at a generic family of operators:
\begin{align}\label{eq:cD-triple}
	\cD_{[M',L'],[M^\ssfu,L^\ssfu],[M^\ssfv,L^\ssfv]}(u\otimes v) := \cD_{M',L'}\big(\cD_{M^\ssfu,L^\ssfu}(u)\cD_{M^\ssfv,L^\ssfv}(v)\big),
\end{align}
where $u,v\in \mC^{\infty}(M^n)$. Furthermore, we write
\begin{align}\label{eq:D-triple}
	D_{[M',L'],[M^\ssfu,L^\ssfu],[M^\ssfv,L^\ssfv]}(u\otimes v) := \cD_{M',L'}\big(\cD_{M^\ssfu,L^\ssfu}(u)\cD_{M^\ssfv,L^\ssfv}(v)\big)\Big\vert_{\rho=0}.
\end{align}
It is clear that the curved Ovsienko--Redou operator $D_{2k}$ is a specialization of
\begin{align}\label{eq:D-general}
	&D_{U,V,L,K^\ssfu,K^\ssfv}(u\otimes v)\\
	&\qquad:= \sum_{\substack{M^{\ssfu},M^{\ssfv},M'\ge 0\\ M^{\ssfu}+M^{\ssfv}+M'=U}} \frac{\Gamma(U+K^\ssfu+1)\Gamma(U+K^\ssfv+1)}{\Gamma(M^\ssfu+K^\ssfu+1)\Gamma(M^\ssfv+K^\ssfv+1)\Gamma(M'+1)}\notag\\
	&\qquad\ \quad\times \frac{\Gamma(L-M^\ssfu)\Gamma(L-M^\ssfv)\Gamma(L+V-M')}{\Gamma(L-U)\Gamma(L)^2}\notag\\
	&\qquad\ \quad\times D_{[M',-L-V+2M'],[M^{\ssfu},L-K^{\ssfu}],[M^{\ssfv},L-K^{\ssfv}]}(u\otimes v),\notag
\end{align}
where $U$ and $V$ are fixed \emph{nonnegative} integers and $L$, $K^\ssfu$ and $K^\ssfv$ are indeterminates.

For the operators $D_{2k;\mI}$, we look at another generic family of operators:
\begin{align}
	\cD_{[M',L'],[M,L];f}(u) &:= \cD_{M',L'}\big(f\,\cD_{M,L}(u)\big),
\end{align}
where $u\in \mC^{\infty}(M^n)$ and $f\in \mC^{\infty}(M^n\times (-\epsilon, \epsilon))$. Also, we write
\begin{align}\label{eq:L-def}
	D_{[M',L'],[M,L];f}(u) :=  \cD_{[M',L'],[M,L];f}(u)\Big\vert_{\rho=0}.
\end{align}
Now $D_{2k;\mI}$ can be generalized as
\begin{align}\label{generalized-linear-operator}
	D_{U,V,L,K;f}(u) &
	:= \sum_{\substack{M,M'\ge 0\\M+M'=U}} \frac{\Gamma(U+K+1)}{\Gamma(M+K+1)\Gamma(M'+1)}\\
	&\ \quad\times \frac{\Gamma(L+M')\Gamma(L+V-M')}{\Gamma(L)^2}\notag\\
	&\ \quad\times D_{[M',-L-V+2M'],[M,L-K+U];f}(u),\notag
\end{align}
where $U$ and $V$ are fixed \emph{nonnegative} integers and $L$ and $K$ are indeterminates.

\section{Background}
\label{sec:bg}

\subsection{Ambient spaces}
\label{subsec:ambient}

We begin by recalling the relevant aspects of the ambient space, following Fefferman and Graham~\cite{FeffermanGraham2012}.

Let $(M^n,\kc)$ be a conformal manifold of signature $(p,q)$.
Denote
\begin{equation*}
	\mG := \left\{ (x,g_x) \suchthat x \in M, g \in \kc \right\} \subset S^2T^\ast M 
\end{equation*}
and let $\pi \colon \mG \to M$ be the natural projection.
We regard $\mG$ as a principal $\bR_+$-bundle with dilation $\delta_\lambda \colon \mG \to \mG$ for $\lambda \in \bR_+$:
\begin{equation*}
	\delta_\lambda (x, g_x) := (x , \lambda^2 g_x) .
\end{equation*}
Denote by $T := \left. \frac{\partial}{\partial\lambda}\right|_{\lambda=1} \delta_\lambda$ the infinitesimal generator of $\delta_\lambda$.
The canonical metric is the degenerate metric ${\boldsymbol{g}}$ on $\mG$ defined by
\begin{equation*}
	{\boldsymbol{g}}(X,Y) := g_x(\pi_\ast X, \pi_\ast Y)
\end{equation*}
for $X,Y\in T_{(x,g_x)}\mG$.
Note that $\delta_\lambda^\ast \boldsymbol{g} = \lambda^2\boldsymbol{g}$.

A choice of representative $g \in \kc$ determines an identification $\bR_+ \times M \cong \mG$ via $(\tau,x)\cong (x,\tau^2g_x)$.
In these coordinates, $T_{(\tau,x)} = \tau\partial_{\tau}$ and $\boldsymbol{g}_{(\tau,x)} = \tau^2\pi^\ast g$.

Extend the projection and dilation to $\mG \times \bR$ in the natural way:
\begin{align*}
	\pi( x, g_x, \rho) & := x , \\
	\delta_\lambda( x, g_x, \rho) & := ( x , \lambda^2 g_x , \rho ) ,
\end{align*}
where $\rho$ denotes the coordinate on $\bR$.
We abuse notation and also denote by $T$ the infinitesimal generator of $\delta_\lambda \colon \mG \times \bR \to \mG \times \bR$.
Let $\iota \colon \mG \to \mG \times \bR$ denote the inclusion $\iota(x,g_x) := (x,g_x,0)$.
A \emph{pre-ambient space} for $(M^n,\kc)$ is a pair $(\cmG,\cg)$ consisting of a dilation-invariant subspace $\cmG \subseteq \mG \times \bR$ containing $\iota(\mG)$ and a pseudo-Riemannian metric $\cg$ of signature $(p+1,q+1)$ satisfying $\delta_\lambda^\ast \cg = \lambda^2 \cg$ and $\iota^\ast \cg = \boldsymbol{g}$.

An \emph{ambient space} for $(M^n,\kc)$ is a pre-ambient space $(\cmG,\cg)$ for $(M^n,\kc)$ which is formally Ricci flat. That is,
\begin{align*}
	\Ric(\cg) \in \begin{cases}
		O(\rho^\infty), & \text{if $n$ is odd},\\
		O^+(\rho^{n/2-1}), & \text{if $n$ is even}.
	\end{cases}
\end{align*}
Here $O^+(\rho^m)$ is the set of sections $S$ of $S^2T^\ast\cmG$ such that
\begin{enumerate}
	\item $\rho^{-m}S$ extends continuously to $\iota(\mG)$;
	\item for each $z = (x,g_x) \in \mG$, there is an $s \in S^2 T_{x}^\ast M$ such that $\tr_{g_x}s=0$ and $(\iota^\ast(\rho^{-m}S)(z) = (\pi^\ast s)(z)$.
\end{enumerate}

Fefferman and Graham~\cite[Theorem~2.9(A)]{FeffermanGraham2012} showed that:
{\itshape Letting $(M^n,\kc)$ be a conformal manifold and picking a representative $g \in \kc$, there is an $\epsilon>0$ and a one-parameter family $g_\rho$ of metrics on $M$ with $\rho \in (-\epsilon,\epsilon)$ such that $g_0=g$ and
	\begin{equation}
		\label{eqn:straight-and-normal}
		\begin{split}
			\cmG & := \mG \times (-\epsilon,\epsilon) \\
			\cg & := 2\rho \, d\tau^2 + 2\tau \, d\tau \, d\rho + \tau^2 g_\rho 
		\end{split}
	\end{equation}
	define an ambient space $(\cmG,\cg)$ for $(M^n,\kc)$.}
We say that an ambient metric in the above form is \emph{straight and normal}.

Let $(\cmG,\cg)$ be the ambient space for $(M^n,\kc)$.
Denote by
\begin{equation*}
	\cmE[w] := \left\{ \cf \in C^\infty(\cmG) \suchthat \delta_\lambda^\ast \cf = \lambda^w\cf \right\} 
\end{equation*}
the space of homogeneous functions on $\cmG$ of weight $w \in \bR$.
Note that $\cf \in \cmE[w]$ if and only if $T\cf = w\cf$.
The space of \emph{conformal densities} of weight $w$ is
\begin{equation*}
	\mE[w] := \left\{ \iota^\ast \cf \in C^\infty(\mG) \suchthat \cf \in \cmE[w] \right\} .
\end{equation*}

Fix $n \in \bN$.
An ambient \emph{scalar Riemannian invariant} $\cI$ is an assignment to each ambient space $(\cmG^{n+2},\cg)$ of a linear combination $\cI_{\cg}$ of complete contractions of
\begin{equation}
	\label{eqn:tensors}
	\cnabla^{N_1}\cRm \otimes \dotsm \otimes \cnabla^{N_\ell}\cRm 
\end{equation}
with $\ell \geq 2$, where $\cnabla$ and $\cRm$ are the Levi-Civita connection and Riemann curvature tensor, respectively, of $\cg$. We regard $\cRm$ as a section of $\otimes^4T^\ast\cmG$, and we use $\cg^{-1}$ to take contractions.
Any complete contraction of~\eqref{eqn:tensors} is homogeneous of weight
\begin{equation*}
	w = -2\ell - \sum_{i=1}^\ell N_i .
\end{equation*}
We assume $\ell \geq 2$ because any complete contraction of $\cnabla^N\cRm$ is proportional to $\cDelta^{N/2}\cR$ modulo ambient scalar Riemannian invariants, and $\cDelta^{N/2}\cR=0$ when it is independent of the ambiguity of $\cg$.
If $\cI$ is independent of the ambiguity of $\cg$, then $\mI := \iota^\ast \cI_{\cg} \in \mE[w]$ is independent of the choice of ambient space.
A \emph{scalar Weyl invariant} is a scalar invariant $\mI \in \mE[w]$ constructed in this way.
Fefferman and Graham gave a condition on the weight $w$ which implies this independence:

\begin{lemma}[Cf.~\cite{FeffermanGraham2012}]
	\label{fefferman-graham-lemma}
	Let $(\cmG^{n+2},\cg)$ be a straight and normal ambient space and let $\cI \in \cmE[w]$ be an ambient scalar Riemannian invariant.
	If $w \geq -n - 2$, then $\iota^\ast\cI_{\cg}$ is independent of the ambiguity of $\cg$.
\end{lemma}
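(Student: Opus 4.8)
The plan is to reduce at once to even $n$ and then combine the Fefferman--Graham normal-form analysis of the ambiguity of a straight and normal ambient metric with a ``weight versus $\rho$-jet-order'' estimate for scalar invariants. By linearity we may assume $\cI$ is a single complete contraction of $\cnabla^{N_1}\cRm\otimes\dotsm\otimes\cnabla^{N_\ell}\cRm$, so $w=-2\ell-\sum_i N_i$. If $n$ is odd, the family $g_\rho$ of \eqref{eqn:straight-and-normal} is determined to infinite order along $\iota(\mG)$ by the normalization and $\Ric(\cg)\in O(\rho^\infty)$, and any two straight and normal ambient metrics for a fixed $g\in\kc$ differ by a diffeomorphism fixing $\iota(\mG)$ to infinite order; a scalar Riemannian invariant is diffeomorphism invariant, so $\iota^\ast\cI_{\cg}$ is already unambiguous, with no condition on $w$ needed. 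So assume $n$ is even from now on.

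For even $n$, I would first recall the precise shape of the ambiguity from \cite{FeffermanGraham2012}. Writing $g^{(j)}:=\tfrac{1}{j!}\,\partial_\rho^j g_\rho\big|_{\rho=0}$, the condition $\Ric(\cg)\in O^{+}(\rho^{n/2-1})$ forces $g^{(0)},\dotsc,g^{(n/2-1)}$ as well as $\tr_g g^{(n/2)}$, but leaves the trace-free part of $g^{(n/2)}$ free and the higher $g^{(j)}$ unconstrained; equivalently, two straight and normal ambient metrics for the same $g\in\kc$ agree up to a diffeomorphism fixing $\iota(\mG)$ to infinite order together with the addition of a section of $S^2T^\ast\cmG$ in $O^{+}(\rho^{n/2})$. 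Using diffeomorphism invariance of $\cI_{\cg}$ once more, it then suffices to show that $\iota^\ast\cI_{\cg}$ is unchanged when $g^{(n/2)}$ is altered by an arbitrary trace-free symmetric $2$-tensor (and the higher coefficients altered arbitrarily).

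The crux of the argument is the estimate: for a complete contraction $\cI_{\cg}$ of weight $w$ as above, $\iota^\ast\cI_{\cg}$ depends on $g_\rho$ only through $g^{(0)},\dotsc,g^{(m)}$ with $m\le -w/2-1$, and in the extreme case $m=-w/2$ only through $\tr_g g^{(-w/2)}$. The mechanism I would use is that the dilation field $T=\tau\partial_\tau$ of the ambient space satisfies $\cnabla T=\Id$; hence every component of $\cRm$ in a $\tau$-slot vanishes and $\cnabla_T$ acts diagonally on each $\cnabla^{N}\cRm$. In the normal form \eqref{eqn:straight-and-normal} one has $\cg^{\rho\rho}\big|_{\iota(\mG)}=0$, so under $\cg^{-1}$ a $\rho$-index can only be paired with a $\tau$-index, which then lands in a vanishing curvature slot; tracking which $\rho$-differentiations and $\rho$-indices can survive the complete contraction at $\rho=0$ then limits each factor $\cnabla^{N_i}\cRm$ to $N_i+2$ units of differentiation of $\cg$, of which only a bounded number become $\rho$-orders, and summing against $\sum_i(N_i+2)=-w$ produces $m\le -w/2-1$, with the surviving top order being a $g$-trace of $g^{(-w/2)}$ at the boundary. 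This bookkeeping is exactly the input taken from \cite{FeffermanGraham2012}.

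With the estimate in hand the conclusion is immediate: since $w\ge -n-2$, we get $-w/2-1\le n/2$. If the inequality is strict, $\iota^\ast\cI_{\cg}$ involves only the canonical coefficients $g^{(0)},\dotsc,g^{(n/2-1)}$, so modifying $g^{(n/2)}$ and beyond changes nothing; if $w=-n-2$, then $\iota^\ast\cI_{\cg}$ can see $g^{(n/2)}$ only through $\tr_g g^{(n/2)}$, which the trace-free ambiguity does not touch. Either way $\iota^\ast\cI_{\cg}$ is independent of the choice of $\cg$, as claimed. The main obstacle, and the only genuinely delicate point, is the $\rho$-jet-order estimate of the third paragraph, especially the clean isolation of the $g$-trace refinement at the threshold weight $w=-n-2$ --- this refinement is precisely what makes $-n-2$, rather than a larger value, the correct bound.
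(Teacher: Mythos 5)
First, note that the paper does not prove this lemma at all: it is quoted from Fefferman--Graham, so your proposal is in effect a reconstruction of their argument (Theorem~2.9(B) for the shape of the ambiguity plus the scalar-invariant counting behind their Proposition~9.1), and your skeleton --- reduce to even $n$, reduce the ambiguity to a trace-free change of $g^{(n/2)}:=\tfrac{1}{(n/2)!}\partial_\rho^{n/2}g_\rho|_{\rho=0}$ together with arbitrary higher coefficients and diffeomorphisms fixing $\iota(\mG)$ to infinite order, then bound which Taylor coefficients $g^{(m)}$ the restriction $\iota^\ast\cI_{\cg}$ can see --- is indeed the right route, and the ingredients you cite ($\cnabla T=\Id$, vanishing of curvature components in a $T$-slot, $\cg^{\rho\rho}=0$ along $\iota(\mG)$) are the correct mechanism.

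The genuine gap is in the ``crux estimate,'' which is both misstated and not proved, and the missing part is exactly what makes $-n-2$ (rather than $-n$) the correct bound. As written the estimate is internally inconsistent: ``depends only on $g^{(0)},\dots,g^{(m)}$ with $m\le -w/2-1$'' and ``in the extreme case $m=-w/2$ only through $\tr_g g^{(-w/2)}$'' cannot both be meant, and in your final paragraph you use a third version (at $w=-n-2$, dependence on $g^{(n/2)}$ only through its trace). Moreover, the counting you sketch never invokes the standing hypothesis $\ell\ge 2$ in the paper's definition of an ambient scalar Riemannian invariant, and without it the bound $m\le -w/2-1$ is false: the weight $-2$ scalar $\cR$ already sees $\tr_g g^{(1)}$, so your ``summing against $\sum_i(N_i+2)=-w$'' cannot by itself yield that bound. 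Using $\ell\ge 2$ (each curvature factor costs at least two units of the dilation-homogeneity budget) one does get $m\le -w/2-1$; but at the threshold $w=-n-2$ this count still permits, in a two-factor contraction, monomials such as the complete contraction of $g^{(n/2)}$ against $g^{(1)}$ (equivalently against the Schouten tensor), which detects precisely the trace-free, ambiguous part of $g^{(n/2)}$. So the assertion that only $\tr_g g^{(n/2)}$ can survive at the threshold is not a consequence of any order/weight bookkeeping; it is the nontrivial cancellation established in Fefferman--Graham (and it is the reason Proposition~\ref{dependence}(ii.2) carries the proviso $\ell\ge j+2$). Since you supply no argument for it beyond attributing the bookkeeping to the reference, your write-up proves the lemma only for $w\ge -n$ and, at $w=-n-2$, reduces to citing the statement being proved. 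To close the gap you would need the actual analysis of how an $O^{+}(\rho^{n/2})$ perturbation of $\cg$ enters the components of $\cnabla^{N}\cRm$ at $\rho=0$, and why in a complete contraction of weight $-n-2$ with at least two curvature factors every such contribution cancels.
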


Bailey, Eastwood, and Graham ~\cite[Theorem~A]{BaileyEastwoodGraham1994} showed that {\itshape every conformally invariant scalar of weight $w>-n$ is a Weyl invariant}.

\subsection{Conformally invariant polydifferential operators}
\label{subsec:ovsienko-redou}

Fix $k,n \in \bN$.
An ambient \emph{polydifferential operator} $\cD$ of weight $-2k$ is an assignment to each ambient space $(\cmG^{n+2},\cg)$ of a linear map
\begin{equation*}
	\cD^{\cg} \colon \cmE[w_1] \otimes \dotsm \otimes \cmE[w_j] \to \cmE[w_1 + \dotsm + w_j - 2k]
\end{equation*}
such that $\cD^{\cg}( \cu_1 \otimes \dotsm \otimes \cu_j)$ is a linear combination of complete contractions of
\begin{equation}
	\label{eqn:polydifferential-precontraction}
	\cnabla^{N_1}\cu_1 \otimes \dotsm \otimes \cnabla^{N_j}\cu_j \otimes \cnabla^{N_{j+1}}\cRm \otimes \dotsm \otimes \cnabla^{N_\ell}\cRm
\end{equation}
with $\ell=j$ or $\ell \geq j+2$.
Necessarily the powers $N_1,\dotsc,N_\ell$ satisfy
\begin{equation*}
	\sum_{i=1}^\ell N_i + 2\ell - 2j = 2k.
\end{equation*}
The \emph{total order} of such a contraction is $\sum_{i=1}^j N_i$.
We say that $\cD$ is \emph{tangential} if $\iota^\ast(\cD^{\cg}(\cu_1 \otimes \dotsm \otimes \cu_j))$ depends only on $\iota^\ast\cu_1,\dotsc,\iota^\ast\cu_j$ and $\cg$ modulo its ambiguity.
On each conformal manifold $(M^n,\kc)$, such an operator determines a \emph{conformally invariant polydifferential operator} 
\begin{equation*}
	D \colon \mE[w_1] \otimes \dotsm \otimes \mE[w_j] \to \mE[w_1 + \dotsm + w_j - 2k].
\end{equation*}

We now recall a condition on the total order of an ambient polydifferential operator that implies that it is independent of the ambiguity of $\cg$ (see \cite[Proposition~9.1]{FeffermanGraham2012}).

\begin{proposition}[Cf.~\cite{CaseYan2024}]
	\label{dependence}
	Let $(\cmG^{n+2},\cg)$ be a straight and normal ambient space and let $\cD$ be an ambient polydifferential operator of weight $-2k$.
	Suppose that
	\begin{enumerate}
		\item[(i)] $n$ is odd,
		\item[(ii.1)] $k \leq \frac{n}{2}$, or
		\item[(ii.2)] $k \leq \frac{n}{2} + 1$ and $\cD$ can be expressed as a linear combination of complete contractions of tensors of the form~\eqref{eqn:polydifferential-precontraction} with $\ell \geq j + 2$.
	\end{enumerate}
	Then $\cD$ is independent of the ambiguity of $\cg$.
\end{proposition}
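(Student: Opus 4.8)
The plan is to reduce both cases to a weight count combined with the Fefferman--Graham uniqueness theorem for the ambient metric, which is the strategy of the proof of \cite[Proposition~9.1]{FeffermanGraham2012} carried out for polydifferential operators as in \cite{CaseYan2024}. First I would dispose of the case where $n$ is odd: then any two straight and normal ambient metrics for $(M^n,\kc)$ agree to infinite order in $\rho$ modulo a dilation-equivariant diffeomorphism fixing $\iota(\mG)$, so there is no ambiguity affecting $\iota^\ast$ and the conclusion is immediate. Hence we may assume $n$ is even and work under hypothesis (ii.1) or (ii.2).

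Next I would set up the even-dimensional case. Fix a representative $g \in \kc$ and put every competing ambient metric in normal form relative to $g$, so $\cg = 2\rho\,d\tau^2 + 2\tau\,d\tau\,d\rho + \tau^2 g_\rho$ with $g_0 = g$, and similarly $\cg'$ with family $g'_\rho$. By \cite[Theorem~2.9(B)]{FeffermanGraham2012}, the Taylor coefficients $\partial_\rho^m g_\rho|_{\rho=0}$ are determined by $(M^n,\kc,g)$ for $m \leq \frac n2 - 1$, so $\cg' - \cg$ vanishes to order $\frac n2$ in $\rho$ and all of the ambiguity sits in the Taylor coefficients of order $\geq \frac n2$. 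Since the homogeneity spaces $\cmE[w_i]$ depend only on $\cmG$ and on the dilation $\delta_\lambda$, a single choice of $\cu_i \in \cmE[w_i]$ serves both $\cg$ and $\cg'$, and the claim reduces to showing that $\iota^\ast\bigl(\cD^{\cg}(\cu_1 \otimes \dotsm \otimes \cu_j)\bigr)$ involves $g_\rho$ only through its Taylor coefficients of order at most $\frac n2 - 1$.

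This is the weight count. Expanding a complete contraction of the form~\eqref{eqn:polydifferential-precontraction} in normal-form coordinates and using that $\cnabla$, $\cRm$, and the contractions with $\cg^{-1}$ are natural, its restriction to $\rho = 0$ becomes a universal expression in the tangential derivatives of the coefficients $\partial_\rho^m g_\rho|_{\rho=0}$ and of the $\partial_\rho^m\cu_i|_{\rho=0}$. One then bounds the largest $m$ that can occur: the homogeneity identity $\sum_{i=1}^{\ell} N_i + 2\ell - 2j = 2k$ caps the total number of derivatives, the special form of $\cg$ in normal form controls how many $\rho$-derivatives can be fed into the $g_\rho$-block (in particular $\iota^\ast\cRm$ depends on $g_0$ alone, so each of the $\ell-j$ curvature factors feeds the $\rho$-order only through its covariant derivatives), and tracking these shows that only coefficients with $m \leq k-1$ appear; under hypothesis (ii.2), where $\ell \geq j+2$ lowers the derivative budget to $\sum_i N_i = 2k - 2(\ell-j)$, the bound improves to $m \leq k-2$. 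Since $k \leq \frac n2$ gives $k-1 \leq \frac n2 - 1$ in case (ii.1) and $k \leq \frac n2 + 1$ gives $k-2 \leq \frac n2 - 1$ in case (ii.2), in both cases only the determined Taylor coefficients enter, whence $\iota^\ast\bigl(\cD^{\cg}(\cu_1 \otimes \dotsm \otimes \cu_j)\bigr) = \iota^\ast\bigl(\cD^{\cg'}(\cu_1 \otimes \dotsm \otimes \cu_j)\bigr)$.

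I expect the bookkeeping in the previous paragraph to be the main obstacle: one must verify that feeding a $\rho$-derivative into the $g_\rho$-block of the ambient metric always consumes enough of the weight budget, and extract the extra decrement in case (ii.2) from the presence of at least two genuine curvature factors whose ``bodies'' $\iota^\ast\cRm$ carry no dependence on the higher Taylor coefficients. This is precisely the content of the proof of \cite[Proposition~9.1]{FeffermanGraham2012} in the scalar setting and its adaptation to polydifferential operators in \cite{CaseYan2024}; everything else — the passage to normal form, the naturality of the contractions, and the final arithmetic — is formal.
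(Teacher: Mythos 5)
You should first note that the paper itself does not prove \cref{dependence}: it is recalled from \cite{CaseYan2024} (following \cite[Proposition~9.1]{FeffermanGraham2012}), so there is no in-paper argument to compare against. Your outline follows the same strategy as that literature — dispose of odd $n$ by uniqueness of the ambient metric to infinite order, then in even dimensions work in normal form, use that $\partial_\rho^m g_\rho|_{\rho=0}$ is determined for $m\le\frac{n}{2}-1$, and bound the Taylor order of $g_\rho$ entering $\iota^\ast\bigl(\cD^{\cg}(\cu_1\otimes\dotsm\otimes\cu_j)\bigr)$ — and the target bounds $m\le k-1$ in case (ii.1) and $m\le k-2$ in case (ii.2) are indeed the right ones, with the final arithmetic correct.

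However, the central counting step is not established, and the justification you sketch would fail. First, the claim that $\iota^\ast\cRm$ depends on $g_0$ alone is false: already at $\rho=0$ the curvature of the normal-form metric involves $\partial_\rho g_\rho|_{\rho=0}$ and $\partial_\rho^2 g_\rho|_{\rho=0}$ (e.g.\ the components $\cR_{\rho i\rho j}|_{\rho=0}$), so your accounting of how curvature factors ``feed the $\rho$-order'' — and hence your explanation of the improvement in case (ii.2) — is incorrect as stated. Second, and more seriously, a bare derivative-budget count does not yield $m\le k-1$: when $\ell=j$ a single factor $\cnabla^{N_i}\cu_i$ with $N_i=2k$ is allowed, and its components involve Taylor coefficients $\partial_\rho^m g_\rho|_{\rho=0}$ with $m$ up to roughly $2k-1$, far beyond $\frac{n}{2}-1$ when $k=\frac{n}{2}$. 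What rescues the count is the structure of the contraction along $\iota(\mG)$: since $\cg^{\rho\rho}=\cg^{\rho i}=0$ there, every $\rho$-index must pair with a $\tau$-index, and components in the $T$-direction are controlled by the straightness identities ($\cnabla T=\id$, $T\lrcorner\cRm=0$, $\cnabla_T$ acting by homogeneity), which trades the dangerous $\rho$-derivatives for determined data. This bookkeeping is exactly the content of \cite[Proposition~9.1]{FeffermanGraham2012} and of its polydifferential adaptation in \cite{CaseYan2024}, and your proposal defers it to those references rather than carrying it out; without it neither bound, nor in particular the gain from $\ell\ge j+2$ in case (ii.2), is proved.
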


Now let $D \colon \mE\bigl[-\frac{n-2k}{j+1}\bigr]^{\otimes j} \to \mE\bigl[-\frac{jn+2k}{j+1}\bigr]$ be a conformally invariant polydifferential operator.
Then for every compact conformal manifold $(M^n,\kc)$, the Dirichlet form $\kD \colon \mE\bigl[-\frac{n-2k}{j+1}\bigr]^{\otimes(j+1)} \to \bR$ determined by
\begin{equation*}
	\mathfrak{D}(u_0 \otimes \dotsm \otimes u_j) := \int_M u_0D(u_1 \otimes \dotsm \otimes u_j) \dvol ,
\end{equation*}
is conformally invariant.
We say that $D$ is \emph{formally self-adjoint} if $\kD$ is symmetric.
This implies that $D$ is itself symmetric.

We conclude this subsection by constructing the curved Ovsienko--Redou operators $D_{2k}$ and their linear analogues $D_{2k;\mI}$; see~\cite[Section~2]{CaseYan2024} or \cite[Lemma~6.1]{CaseLinYuan2022or} for more details.

\begin{lemma}
	\label{linear}
	Let $(M^n,\kc)$ be a conformal manifold and let $\cI \in \cmE[-2\ell]$ be an ambient scalar Riemannian invariant.
	Let $k \in \bN$ and if $n$ is even, we assume additionally that $k+\ell\le \frac{n}{2}+1-\delta_{0,\ell}$ with $\delta$ the Kronecker delta. Then
	\begin{equation*}
		\cD_{2k;\cI}(\cu) := \sum_{r+s=k} \frac{k!}{r!s!}\frac{(\ell+s-1)!(\ell+r-1)!}{(\ell-1)!^2} \cDelta^r\left( \cI \cDelta^s\cu \right)
	\end{equation*}
	defines a tangential differential operator $\cD_{2k;\cI} \colon \cmE\bigl[-\frac{n-2k-2\ell}{2}\bigr] \to \cmE\bigl[-\frac{n+2k+2\ell}{2}\bigr]$.
	In particular, the differential operator $D_{2k;\mI} \colon \mE\bigl[ -\frac{n-2k-2\ell}{2} \bigr] \to \mE\bigl[ -\frac{n+2k+2\ell}{2} \bigr]$ defined by
	\begin{align*}
		D_{2k;\mI}(\iota^\ast\cu) := \big(\cD_{2k;\cI}\cu\big) \circ \iota ,
	\end{align*}
	is conformally invariant.
\end{lemma}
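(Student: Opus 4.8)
The plan is to verify the two substantive points --- that the displayed sum has the asserted homogeneities, and that the resulting ambient operator $\cD_{2k;\cI}$ is tangential --- and then to read off conformal invariance of $D_{2k;\mI}$ from the general correspondence recalled just above, specialized to one input function. The homogeneity claim is pure weight bookkeeping: $\cDelta$ lowers the homogeneity weight by $2$ and multiplication by $\cI\in\cmE[-2\ell]$ lowers it by $2\ell$, so each summand $\cDelta^r(\cI\,\cDelta^s\cu)$ with $r+s=k$ lowers the weight of $\cu$ by $2k+2\ell$; feeding in $\cu\in\cmE\bigl[-\tfrac{n-2k-2\ell}{2}\bigr]$ produces $\cmE\bigl[-\tfrac{n+2k+2\ell}{2}\bigr]$. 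Tangentiality I would split into two independent assertions: (a) for a \emph{fixed} straight and normal ambient metric $\cg$, the function $\iota^\ast(\cD_{2k;\cI}\cu)$ depends only on $\iota^\ast\cu$; and (b) this common value does not depend on the ambiguity of $\cg$.

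For (a) I would work in the straight and normal coordinates \eqref{eqn:straight-and-normal} and replace $\cDelta$ by the conjugated operator $\cDelta_w$ of \eqref{Laplacian-expansion} --- legitimate since $w|_{\rho=0}=1$ --- so that $\iota^\ast\cD_{2k;\cI}$ is computed by the composition formula \eqref{eq:D-2k-I}, assembled from the operators $\mR_j=-2\rho\partial_\rho^2+2j\partial_\rho+\widetilde{\mM}(\rho)$. It then suffices to show $\iota^\ast\cD_{2k;\cI}(\rho\cw)=0$ for every $\cw$ of the relevant weight. The elementary engine is the commutation relation
\begin{align*}
	\mR_j\circ\mP_m \;=\; 2m(j-m+1)\,\mP_{m-1} \;+\; \mP_m\circ\mR_{j-2m}\qquad(m\ge 1),
\end{align*}
with $\mP_m$ multiplication by $\rho^m$, verified directly; iterating it normal-orders any composition so that all powers of $\rho$ --- the one coming from $\cw$, together with those produced by the Taylor tails of $\widetilde{\mM}(\rho)$ and of $\cI$ --- are pushed to the far left, beyond the reach of every $\mR$- and $\mD$-operator. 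Since $\iota^\ast$ then annihilates every term that retains a leftmost $\mP_m$ with $m\ge 1$, the vanishing of $\iota^\ast\cD_{2k;\cI}(\rho\cw)$ reduces, upon grouping the surviving terms by the composition of $\mM$-operators (and $\rho$-derivatives of $\cI$) that they apply, to one scalar identity per composition; each such identity is a terminating Gauss/Chu--Vandermonde summation satisfied by the coefficients $b_{r,s}=\frac{k!}{r!s!}\frac{(\ell+s-1)!(\ell+r-1)!}{(\ell-1)!^2}$. Equivalently, imposing tangentiality on the ansatz $\sum_{r+s=k}c_{r,s}\,\cDelta^r(\cI\,\cDelta^s\cu)$ forces a linear recursion on the $c_{r,s}$ whose solution space is one-dimensional, and one verifies that the stated $b_{r,s}$ lie in it. This hypergeometric verification is the step I expect to be the main obstacle: it is where the precise form of the coefficients is actually used, and it is a combinatorial identity of the same flavour as those needed later for the Juhl type formulas.

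For (b) I would invoke \cref{dependence} with $j=1$. As an ambient polydifferential operator $\cD_{2k;\cI}$ has weight $-2(k+\ell)$, so the parameter called ``$k$'' in that proposition is here $k+\ell$. If $\ell\ge 1$ then $\cI$ is a linear combination of complete contractions involving at least two factors of the ambient curvature, so every summand $\cDelta^r(\cI\,\cDelta^s\cu)$ is a linear combination of complete contractions of $\cnabla^{\bullet}\cu$ with at least two curvature factors; hence hypothesis (ii.2) applies, and \cref{dependence} gives independence of the ambiguity once $k+\ell\le\tfrac{n}{2}+1$ (when $n$ is even). If $\ell=0$ then $\cI$ is constant and $\cD_{2k;\cI}$ is a multiple of the ambient operator $\cDelta^k$ defining $P_{2k}$, so hypothesis (ii.1) applies and requires $k\le\tfrac{n}{2}$; no condition is needed when $n$ is odd. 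In each case this is precisely the assumption $k+\ell\le\tfrac{n}{2}+1-\delta_{0,\ell}$. Once $\cD_{2k;\cI}$ is known to be tangential, the prescription $D_{2k;\mI}(\iota^\ast\cu):=(\cD_{2k;\cI}\cu)\circ\iota$ is well defined, and conformal invariance of $D_{2k;\mI}$ is the $j=1$ instance of the general principle invoked at the outset.
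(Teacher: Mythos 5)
Your overall architecture is sound (the weight count, the reduction of tangentiality to $\iota^\ast\cD_{2k;\cI}(\rho\cw)=0$ for homogeneous $\cw$, the commutation relation $\mR_j\circ\mP_m=2m(j-m+1)\mP_{m-1}+\mP_m\circ\mR_{j-2m}$, which is indeed correct, and the use of \cref{dependence} with $j=1$ to handle the ambiguity, which accounts correctly for the hypothesis $k+\ell\le\frac{n}{2}+1-\delta_{0,\ell}$). But the decisive step is missing: after normal-ordering, you must show that the \emph{specific} coefficients $b_{r,s}=\frac{k!}{r!s!}\frac{(\ell+s-1)!(\ell+r-1)!}{(\ell-1)!^2}$ make every surviving coefficient at $\rho=0$ vanish, and you only assert that ``each such identity is a terminating Gauss/Chu--Vandermonde summation satisfied by the $b_{r,s}$,'' explicitly flagging it as the expected main obstacle. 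That assertion is exactly the content of the lemma --- everything else in your write-up is routine --- so as it stands this is a plan rather than a proof. Moreover the identification of the needed identities as Chu--Vandermonde is unsubstantiated and likely too optimistic: the closely analogous summations carried out in this paper (e.g.\ \eqref{eq:aux-sum-2}) require the Saalsch\"utzian ${}_3F_2$ evaluation \eqref{eq:P-S}, not a ${}_2F_1$ summation. A smaller caveat: for $\ell=0$ the coefficients $b_{r,s}$ degenerate (the $\Gamma(\ell)^2$ in the denominator), so your claim that $\cD_{2k;\cI}$ is then a multiple of $\cDelta^k$ needs to be stated as the appropriate limiting interpretation.

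For comparison: the paper itself gives no proof of this lemma, citing \cite[Section~2]{CaseYan2024} and \cite[Lemma~6.1]{CaseLinYuan2022or}, where tangentiality is established by a direct ambient argument, commuting $\cDelta$ past multiplication by $Q=\cg(T,T)$ on homogeneous functions rather than working in the Fefferman--Graham normal form as you propose. Your route can in fact be closed within this paper's own machinery: the vanishing you need is precisely the $K=0$, $N\ge 1$ specialization of \cref{generalized-Juhl-linear-formula} (with $U=V=k$, $L=\ell$, $f=\cI$), where the prefactor $\frac{1}{K+N}\prod_{n=0}^{U}(K+n)$ vanishes at $K=0$ while all other factors stay finite since $\ell\ge 1$; but that theorem's proof runs through \cref{combinatorial1} and Pfaff--Saalsch\"utz, which is the combinatorial input your proposal leaves unproven.
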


The curved Ovsienko--Redou operators arise by looking for tangential linear combinations of the operators $\cD_{2k-2s;\cDelta^{s}\cf}$.

\begin{lemma}\label{le:cD-2k}
	\label{ovsienko-redou}
	Let $(M^n,\kc)$ be a conformal manifold.
	Let $k \in \bN$ and
	if $n$ is even, we assume additionally that $k \leq \frac{n}{2}$.
	Then
	\begin{align*}
		\cD_{2k}(\cu \otimes \cv) := \sum_{r+s+t=k} a_{r,s,t} \cDelta^r\left( (\cDelta^s\cu)(\cDelta^t\cv) \right)
	\end{align*}
	with
	\begin{align*}
		a_{r,s,t} := \frac{k!}{r!s!t!}\frac{\Gamma\bigl(\frac{n+4k}{6}-r\bigr)\Gamma\bigl(\frac{n+4k}{6}-s\bigr)\Gamma\bigl(\frac{n+4k}{6}-t\bigr)}{\Gamma\bigl(\frac{n-2k}{6}\bigr)\Gamma\bigl(\frac{n+4k}{6}\bigr)^2}
	\end{align*}
	defines a tangential bidifferential operator $\cD_{2k} \colon \cmE\bigl[-\frac{n-2k}{3}\bigr]^{\otimes 2} \to \cmE\bigl[-\frac{2n+2k}{3}\bigr]$.
	In particular, the bidifferential operator $D_{2k} \colon \mE\bigl[ -\frac{n-2k}{3} \bigr]^{\otimes 2} \to \mE\bigl[ -\frac{2n+2k}{3} \bigr]$ defined by
	\begin{align*}
		D_{2k}( \iota^\ast\cu \otimes \iota^\ast\cv ) := \cD_{2k}( \cu \otimes \cv ) \circ \iota
	\end{align*}
	is conformally invariant.
\end{lemma}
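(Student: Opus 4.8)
The plan is to prove that $\cD_{2k}$ is \emph{tangential}: the mapping property $\cD_{2k}\colon\cmE\bigl[-\tfrac{n-2k}{3}\bigr]^{\otimes2}\to\cmE\bigl[-\tfrac{2n+2k}{3}\bigr]$ is an immediate weight count (each $\cDelta$ lowers weight by $2$ and $r+s+t=k$), and once tangentiality is known the induced operator $D_{2k}(\iota^\ast\cu\otimes\iota^\ast\cv):=\cD_{2k}(\cu\otimes\cv)\circ\iota$ is well defined and conformally invariant by the general principle recalled in \cref{subsec:ovsienko-redou}. By \cref{dependence}, $\cD_{2k}$ is independent of the ambiguity of $\cg$ (for even $n$ this uses that its total order $2k\le n$), so we may fix a straight and normal ambient metric \eqref{eqn:straight-and-normal}. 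Two homogeneous extensions of the same density differ by $Q$ times a density of weight lowered by $2$, where $Q\in\cmE[2]$ is a defining function for $\iota(\mG)$; and since $a_{r,s,t}=a_{r,t,s}$ makes $\cD_{2k}$ symmetric in its two arguments, tangentiality reduces to the single claim
\[
\iota^\ast\cD_{2k}\bigl((Q\cw)\otimes\cv\bigr)=0\qquad\text{for all }\cw\in\cmE\bigl[-\tfrac{n-2k}{3}-2\bigr]\text{ and }\cv\in\cmE\bigl[-\tfrac{n-2k}{3}\bigr].
\]

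First I would record how iterated ambient Laplacians interact with $Q$. In the flat model $[\cDelta,Q]=4\bE+2(n+2)$ with $\bE$ the Euler operator, so for $\ch\in\cmE[w]$ a short induction gives, for $j\ge1$,
\[
\cDelta^{\,j}(Q\ch)=Q\,\cDelta^{\,j}\ch+2j\,(2w+n+4-2j)\,\cDelta^{\,j-1}\ch .
\]
For a genuine straight and normal ambient metric this persists up to terms that are either divisible by $Q$ or $O(\rho^{n/2})$ (there is no error for odd $n$), and these are harmless here because at most $k\le\tfrac n2$ Laplacians are ever composed, so $\iota^\ast$ annihilates them. This curved bookkeeping — not the flat-model algebra — is the step that needs care, and it is precisely where the hypothesis $k\le\tfrac n2$ for even $n$ is used; it can be imported from the ambient machinery of Fefferman--Graham \cite{FeffermanGraham2012} and its use in \cite{CaseYan2024}.

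Now set $\gamma:=-\tfrac{n-2k}{3}$ and recall $L_k=\tfrac{n+4k}{6}$, so $2\gamma+n=2L_k$ and $\tfrac{n-2k}{6}=L_k-k$. Applying the identity with $\ch=\cw$ (weight $\gamma-2$) yields $\cDelta^{\,s}(Q\cw)=Q\,\cDelta^{\,s}\cw+\beta_s\,\cDelta^{\,s-1}\cw$ with $\beta_s:=4s(L_k-s)$ and the convention $\beta_0=0$. Multiplying by $\cDelta^{\,t}\cv$ and applying the outer $\cDelta^{\,r}$: since $r+s+t=k$, the weight of $(\cDelta^{\,s}\cw)(\cDelta^{\,t}\cv)$ makes the coefficient in the identity equal $2r\cdot2(r-L_k)$ — exactly as for the GJMS operators — and $\iota^\ast$ kills the $Q$-divisible part, so $\iota^\ast\cDelta^{\,r}\bigl(Q\,(\cDelta^{\,s}\cw)(\cDelta^{\,t}\cv)\bigr)=4r(r-L_k)\,\iota^\ast\cDelta^{\,r-1}\bigl((\cDelta^{\,s}\cw)(\cDelta^{\,t}\cv)\bigr)$. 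Substituting these into $\cD_{2k}\bigl((Q\cw)\otimes\cv\bigr)=\sum_{r+s+t=k}a_{r,s,t}\,\cDelta^{\,r}\bigl((\cDelta^{\,s}(Q\cw))(\cDelta^{\,t}\cv)\bigr)$ and reindexing by $(i,j,l)$ with $i+j+l=k-1$ turns $\iota^\ast\cD_{2k}\bigl((Q\cw)\otimes\cv\bigr)$ into
\[
\sum_{i+j+l=k-1}\Bigl[\,4(i+1)(i+1-L_k)\,a_{i+1,j,l}+\beta_{j+1}\,a_{i,j+1,l}\,\Bigr]\;\iota^\ast\cDelta^{\,i}\bigl((\cDelta^{\,j}\cw)(\cDelta^{\,l}\cv)\bigr).
\]

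Finally I would check that each bracket vanishes. Since $\beta_{j+1}=-4(j+1)(j+1-L_k)$, this amounts to the symmetry $(i+1)(i+1-L_k)\,a_{i+1,j,l}=(j+1)(j+1-L_k)\,a_{i,j+1,l}$, which follows at once from $a_{r,s,t}=\tfrac{k!}{r!\,s!\,t!}\tfrac{\Gamma(L_k-r)\Gamma(L_k-s)\Gamma(L_k-t)}{\Gamma(L_k-k)\Gamma(L_k)^2}$ and $\Gamma(z+1)=z\Gamma(z)$: both sides equal $-\tfrac{k!}{i!\,j!\,l!}\tfrac{\Gamma(L_k-i)\Gamma(L_k-j)\Gamma(L_k-l)}{\Gamma(L_k-k)\Gamma(L_k)^2}$. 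Hence $\iota^\ast\cD_{2k}\bigl((Q\cw)\otimes\cv\bigr)=0$, establishing tangentiality and with it the lemma. The one genuine obstacle is the curved correction terms in the commutation identity of the second paragraph; one can bypass them entirely by running the same cancellation with the second-order operators $\mR_j$ of \eqref{Laplacian-expansion}, for which the indicial behaviour $\mR_j(\rho^{N}g)=2N(j-N+1)\,\rho^{N-1}g\big|_{\rho=0}+O(\rho^{N})$ is exact and the weight restrictions are already built in.
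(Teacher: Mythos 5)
Your proposal is correct, and it supplies an argument that this paper itself does not give: \cref{ovsienko-redou} is stated as background, with the proof deferred to \cite[Section~2]{CaseYan2024} and \cite[Lemma~6.1]{CaseLinYuan2022or}, so there is no in-paper proof to compare against. Your route---reduce tangentiality, via bilinearity and the symmetry $a_{r,s,t}=a_{r,t,s}$, to the single vanishing $\iota^\ast\cD_{2k}\bigl((Q\cw)\otimes\cv\bigr)=0$, iterate the commutator of $\cDelta$ with $Q$, and observe that the resulting bracket vanishes because $(i+1)(i+1-L_k)\,a_{i+1,j,l}=(j+1)(j+1-L_k)\,a_{i,j+1,l}$---is precisely the standard tangentiality verification used in those references, and your weight bookkeeping ($\beta_s=4s(L_k-s)$ for the inner slot, $4r(r-L_k)$ for the outer application, using $2\gamma+n=2L_k$) and the $\Gamma$-function check are all accurate. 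One remark: the hedge in your second paragraph about curved correction terms is unnecessary. The normal-form ambient metric \eqref{eqn:straight-and-normal} is \emph{straight}, i.e.\ $\cnabla T=\operatorname{Id}$, so $dQ=2T^\flat$ and $\cDelta Q=2(n+2)$ hold exactly, whence $\cDelta(Q\ch)=Q\,\cDelta\ch+2(2w+n+2)\,\ch$ for $\ch\in\cmE[w]$ with no error term at all; the iterated identity you use is therefore exact for any straight pre-ambient metric, independently of Ricci-flatness, and the hypothesis $k\le\frac{n}{2}$ for even $n$ enters only through \cref{dependence} (independence of the ambiguity of $\cg$), exactly where you invoke it. Consequently the fallback through the operators $\mR_j$ in your closing sentence, while consistent with how the rest of the paper computes, is not needed for this lemma.
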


According to the above information, it can be seen that the operators
\begin{align}\label{generalized-GJMS}
	D_{M,L}(u):= \mR_{L+1-2M} \cdots \mR_{L-3} \mR_{L-1} (u)\Big\vert_{\rho=0}
\end{align}
that will play a fundamental role in our analysis may also be realized by 
\begin{align}\label{formal-GJMS}
	D_{M,L}(u)=\cDelta^M\left(\cu \right)\Big\vert_{\tau=1, \rho=0},
\end{align}
where $\cu(\tau,x,\rho)=\tau^{L-\frac{n}{2}}u(x)$ is a conformal density of weight $L-\frac{n}{2}$ on $\cmG$. However, operators defined in \eqref{formal-GJMS} may depend on the extension of $u$ on $M^n\times (-\epsilon, \epsilon)$. So it is more convenient to work with the definition \eqref{generalized-GJMS} which avoids the discussion on the tangential property.
\section{Diffindo}\label{diffindo}

In Subsection~\ref{sec:diffindo-intro}, we have pointed out that the key in our argument is to split the analyses of the inner and outer layers of operator compositions in $D_{2k}$ and $D_{2k;\mI}$. Particularly, what are produced from the inner layer are essentially $\rho^N$ or $\rho^N f$ with $N\in \bN$ and $f=f(\rho)$. Since the outer layer is simply of the form $\mR_{L+1-2M} \cdots \mR_{L-3} \mR_{L-1}$, we shall look at $D_{M,L}(\rho^N)$ and $D_{M,L}(\rho^N f)$ to cast the \emph{Diffindo}. In view of \eqref{eq:SAB-def}, we first need to evaluate $\mathcal{S}_{\mathbf{A},\mathbf{B},M,L}(\rho^N)$ and $\mathcal{S}_{\mathbf{A},\mathbf{B},M,L}(\rho^N f)$ for arbitrary sequences $\mathbf{A}=(A_1,\ldots,A_r)$ and $\mathbf{B}=(B_0,B_1,\ldots,B_r)$ of nonnegative integers such that
\begin{align*}
	r + \sum_{j=0}^r B_j = M.
\end{align*}

\subsection{Evaluation of $D_{M,L}(\rho^N)$}

We begin with $\mathcal{S}_{\mathbf{A},\mathbf{B},M,L}(\rho^N)$. To simplify our notation, we write
\begin{align}
	S_{\mathbf{A},\mathbf{B}}(N) = S_{\mathbf{A},\mathbf{B},M,L}(N) := \mathcal{S}_{\mathbf{A},\mathbf{B},M,L}(\rho^N).
\end{align}
The following result gives an explicit expression of $S_{\mathbf{A},\mathbf{B},M,L}(N)$.

\begin{lemma}\label{le:S(A,B)}
	For any nonnegative integer $N$,
	\begin{align}\label{eq:S(A,B)}
		S_{\mathbf{A},\mathbf{B},M,L}(N)& = \rho^{N+\sum\limits_{j=1}^r A_j - \sum\limits_{j=0}^r B_j} \prod_{i=0}^{r} \big(B_i!\big)^2 \\
		&\quad\times \scalebox{0.8}{$\left(\begin{array}{c}
				N+\sum\limits_{j=1}^i A_j - \sum\limits_{j=0}^{i-1} B_j\\[12pt]
				B_i
			\end{array}\right)$}
		\scalebox{0.8}{$\left(\begin{array}{c}
				L-N-2i-\sum\limits_{j=1}^i A_j - \sum\limits_{j=0}^{i-1} B_j\\[12pt]
				B_i
			\end{array}\right)$}.\notag
	\end{align}
	In addition, $S_{\mathbf{A},\mathbf{B},M,L}(N)$ vanishes if there exists a certain index $i$ such that
	\begin{align}\label{eq:S(A,B)=0-condition}
		N+\sum\limits_{j=1}^i A_j < \sum\limits_{j=0}^i B_j.
	\end{align}
\end{lemma}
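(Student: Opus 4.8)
The plan is to prove the formula by induction on the length $r$ of the sequence $\mathbf{A}$, or equivalently by peeling off the operators in $\mathcal{S}_{\mathbf{A},\mathbf{B},M,L}(\rho^N)$ from the innermost block of $\mD$'s outward. The crucial observation is that each operator $\mD_j$ and each $\mP_A$ acts simply on a monomial $\rho^m$ (times a function of $x$, which plays no role here): we have $\mP_A(\rho^m)=\rho^{m+A}$ and, since $\mD_j(\rho^m)=(-\rho\partial_\rho^2+j\partial_\rho)(\rho^m)=\bigl(-m(m-1)+jm\bigr)\rho^{m-1}=m(j-m+1)\rho^{m-1}$, a single $\mD_j$ lowers the power of $\rho$ by one and multiplies by $m(j-m+1)$.

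First I would record the effect of a consecutive block of $B$ copies of $\mD$ on $\rho^m$. Recalling that the omitted subscript on each $\mD$ is dictated by its position inside the composition $\mR_{L+1-2M}\cdots\mR_{L-1}$, the indices appearing in the $i$-th block (reading from the inside) decrease by $2$ from one $\mD$ to the next. Applying $B$ such operators to $\rho^m$ therefore produces $\rho^{m-B}$ times a product of $B$ factors of the form $m'(j'-m'+1)$, where both $m'$ (the current power) and $j'$ (the current index) decrease as we move outward. A direct bookkeeping of these factors shows the product telescopes into $\bigl(B!\bigr)^2$ times two binomial coefficients: one of the shape $\binom{m}{B}$ coming from the ``$m'$'' factors, and one of the shape $\binom{(\text{index-data})}{B}$ coming from the ``$j'-m'+1$'' factors. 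This is exactly the local building block that appears in \eqref{eq:S(A,B)}.

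Next I would iterate: starting from $\rho^N$, apply the innermost block $\underbrace{\mD\cdots\mD}_{B_0}$, then $\mP_{A_1}$, then $\underbrace{\mD\cdots\mD}_{B_1}$, and so on. After the block contributing $B_i$, the running power of $\rho$ is $N+\sum_{j=1}^{i}A_j-\sum_{j=0}^{i}B_j$ and the running $\mD$-index has been shifted appropriately; feeding these quantities into the single-block formula from the previous step yields precisely the $i$-th factor in the product \eqref{eq:S(A,B)}, with the arguments $N+\sum_{j=1}^{i}A_j-\sum_{j=0}^{i-1}B_j$ and $L-N-2i-\sum_{j=1}^{i}A_j-\sum_{j=0}^{i-1}B_j$ of the two binomial coefficients emerging from this accounting. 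The overall power of $\rho$ at the end is $N+\sum_{j=1}^{r}A_j-\sum_{j=0}^{r}B_j$, as claimed; here one uses the constraint $r+\sum_{j=0}^{r}B_j=M$ only implicitly, to know that the total number of $\mD$-operators is $M$ and hence that the index of the outermost $\mD$ is $L-1$ minus the appropriate even shift. The vanishing statement is immediate from this description: if at some intermediate stage the running power $N+\sum_{j=1}^{i}A_j-\sum_{j=0}^{i-1}B_j$ is strictly less than the number $B_i$ of $\mD$'s about to be applied, then one of the ``$m'$'' factors is zero (equivalently, the first binomial coefficient vanishes), so $S_{\mathbf{A},\mathbf{B},M,L}(N)=0$; rearranging this inequality gives $N+\sum_{j=1}^{i}A_j<\sum_{j=0}^{i}B_j$, which is \eqref{eq:S(A,B)=0-condition}.

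The main obstacle I anticipate is purely bookkeeping: correctly tracking how the subscript of each $\mD_j$ is determined by its position in $\mR_{L+1-2M}\cdots\mR_{L-1}$ once some of the $\mR$'s have been replaced by $\mM_{2(A+1)}\mP_A$ terms — that is, making sure the ``index-data'' in the second binomial coefficient really is $L-N-2i-\sum_{j=1}^{i}A_j-\sum_{j=0}^{i-1}B_j$ and not off by a shift. I would handle this by carefully setting up the indexing convention once (the $p$-th operator from the inside, whether a $\mD$ or a $\mP$ contributes, carries the same slot-index coming from $\mR_{L+1-2p}$), verifying it on the base case $r=0$ where $\mathcal{S}$ is just a pure block of $M=B_0$ consecutive $\mD$'s, and then letting the induction propagate the shift automatically. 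Once the indexing is pinned down, every step is an elementary manipulation of falling factorials into binomial coefficients.
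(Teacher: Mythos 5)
Your derivation of the product formula \eqref{eq:S(A,B)} is correct and is essentially the paper's own proof: both arguments rest on $\mD_\ell(\rho^n)=n(\ell+1-n)\rho^{n-1}$, the telescoping of a block of $b$ consecutive $\mD$'s acting on $\rho^n$ into $(b!)^2\binom{n}{b}\binom{\ell-n}{b}\rho^{n-b}$, and iteration over the blocks separated by the $\mP_{A_j}$'s; your induction on $r$ is just a formalization of the paper's ``repeatedly applying the above argument,'' and your bookkeeping of the exponent and of the shifted $\mD$-indices reproduces the two binomial arguments in \eqref{eq:S(A,B)}.

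However, your argument for the vanishing statement has a genuine gap. You claim that whenever the running exponent $m=N+\sum_{j=1}^{i}A_j-\sum_{j=0}^{i-1}B_j$ satisfies $m<B_i$, one of the factors $m,m-1,\dots,m-B_i+1$ vanishes, equivalently $\binom{m}{B_i}=0$. This is only true when $m\ge 0$: if $m$ is a negative integer, none of these factors is zero and the generalized binomial coefficient need not vanish (e.g.\ $\binom{-2}{3}=-4\neq 0$), so that block alone does not force $S_{\mathbf{A},\mathbf{B},M,L}(N)=0$. The hypothesis \eqref{eq:S(A,B)=0-condition} is only assumed for \emph{some} index $i$, and at that index the exponent entering the block may already be negative, so as written the step ``the first binomial coefficient vanishes'' is unjustified. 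The paper closes exactly this hole by taking $i$ to be the \emph{smallest} index for which \eqref{eq:S(A,B)=0-condition} holds and checking via minimality that $N+\sum_{j=1}^{i}A_j-\sum_{j=0}^{i-1}B_j\ge 0$ (it equals $N\ge 0$ when $i=0$, and is $\ge A_i\ge 0$ otherwise), so the relevant coefficient is $\binom{m}{B_i}$ with $0\le m<B_i$ and genuinely vanishes. Alternatively, you could argue that since the exponent starts at $N\ge 0$, increases by $A_j\ge 0$ at each $\mP_{A_j}$, and decreases by exactly one at each $\mD$, it can only become negative (as \eqref{eq:S(A,B)=0-condition} asserts it does after block $i$) by first equalling $0$ immediately before some $\mD$ is applied, at which moment the factor $m'=0$ kills the whole expression. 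Either observation repairs your proof; without one of them the vanishing claim does not follow.
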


\begin{proof}
	For arbitrary $\ell$ and $n$, we note that
	\begin{align*}
		\mD_\ell(\rho^n) = \rho^{n-1} \cdot n(\ell+1-n).
	\end{align*}
	Hence,
	\begin{align*}
		\mD_{\ell+1-2b}\cdots \mD_{\ell-3}\mD_{\ell-1} (\rho^n) &= \rho^{n-b}\cdot \prod_{k=0}^{b-1} (n-k)(\ell-n-k)\\
		&= \rho^{n-b}\cdot \big(b!\big)^2 \binom{n}{b}\binom{\ell-n}{b}.
	\end{align*}
	Repeatedly applying the above argument yields \eqref{eq:S(A,B)}.
	
	For the second part, it is trivial when $M=0$ since in this case no operator is acted on $\rho^N$. For $M\ge 1$, let $i$ be the smallest index such that \eqref{eq:S(A,B)=0-condition} holds. It follows that
	\begin{align*}
		B_i > N+\sum_{j=1}^i A_j - \sum_{j=0}^{i-1} B_j.
	\end{align*}
	Furthermore, if $i=0$,
	\begin{align*}
		N+\sum_{j=1}^i A_j - \sum_{j=0}^{i-1} B_j = N \ge 0.
	\end{align*}
	Otherwise, the fact that $i$ is the smallest index ensuring \eqref{eq:S(A,B)=0-condition} implies
	\begin{align*}
		N+\sum_{j=1}^i A_j - \sum_{j=0}^{i-1} B_j = A_i + \left(N + \sum_{j=1}^{i-1} A_j - \sum_{j=0}^{i-1} B_j\right) \ge A_i \ge 0.
	\end{align*}
	Hence, we have the vanishing of the binomial coefficient
	\begin{align*}
		\scalebox{0.8}{$\left(\begin{array}{c}
				N+\sum\limits_{j=1}^i A_j - \sum\limits_{j=0}^{i-1} B_j\\[12pt]
				B_i
			\end{array}\right)$},
	\end{align*}
	thereby implying that $S_{\mathbf{A},\mathbf{B}}(N)$ also vanishes.
\end{proof}

The above evaluation immediately leads us to an explicit expression of $D_{M,L}\circ \mP_N (u)$ for $u$ independent of $\rho$, which further gives $D_{M,L}(\rho^N)$ by taking $u\equiv 1$. More importantly, this result, as shown in the next corollary, serves as a generalization of Juhl's formula in Theorem \ref{Juhl-formula}.

\begin{corollary}\label{coro:D-u}
	The operator $D_{M,L}\circ \mP_N$ satisfies
	\begin{align}\label{generalized-Juhl-formula}
		&D_{M,L}\circ \mP_N(u)\\
		&\quad= \sum_{\mathbf{A}} \mM_{2(A_r+1)}\cdots \mM_{2(A_1+1)} (u)\cdot  (-1)^{M-N-r}\, 2^N \prod_{i=1}^r \frac{1}{(A_{i}!)^2}\notag\\
		&\quad\quad\times \big(M!\big)\prod_{n=0}^{M-1}(L-M-n)\prod_{i=1}^r \scalebox{0.75}{$\dfrac{1}{\sum\limits_{j=1}^i (A_{r+1-j}+1)}$}\prod_{i=1}^r \scalebox{0.75}{$\dfrac{1}{L-2M+\sum\limits_{j=1}^i (A_{r+1-j}+1)}$},\notag
	\end{align}
	where the summation runs over all sequences $\mathbf{A}=(A_{1},\ldots,A_{r})$ of nonnegative integers such that
	\begin{align}\label{eq:generalized-Juhl-formula-cond}
		r + \sum_{j=1}^r A_j = M-N, \quad\quad 0\leq N\le M.
	\end{align}
\end{corollary}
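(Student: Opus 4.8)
The plan is to reduce everything to the formula for $S_{\mathbf{A},\mathbf{B},M,L}(N)$ already supplied by \cref{le:S(A,B)}: first expand $\cD_{M,L}(\rho^N u)$ into the $\mD$/$\mP$ normal form of \eqref{eq:SAB-def}--\eqref{eq:S-prefactor}, then restrict to $\rho=0$, and finally carry out the surviving summation over the auxiliary sequences $\mathbf{B}$ by a single hypergeometric evaluation. Since $u$ does not depend on $\rho$ and the $\mM$-operators commute with the $\rho$-operators $\mD,\mP$, the normal form reads
\begin{align*}
	\cD_{M,L}(\rho^N u)=\sum_{\mathbf{A},\mathbf{B}}&(-1)^{\sum_j A_j}\,2^{-\sum_j A_j+\sum_j B_j}\Big(\prod_{i=1}^r\tfrac{1}{(A_i!)^2}\Big)\\
	&\times S_{\mathbf{A},\mathbf{B},M,L}(N)\,\mM_{2(A_r+1)}\cdots\mM_{2(A_1+1)}(u),
\end{align*}
summed over all compatible $(\mathbf{A},\mathbf{B})$ with $r+\sum_{j=0}^rB_j=M$. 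Substituting \eqref{eq:S(A,B)} and setting $\rho=0$ annihilates every term except those with $N+\sum_jA_j=\sum_jB_j$; combined with $r+\sum_jB_j=M$ this is exactly \eqref{eq:generalized-Juhl-formula-cond}, and it collapses the sign/power-of-two factor to $(-1)^{M-N-r}2^N$, now independent of $\mathbf{B}$. Hence \eqref{generalized-Juhl-formula} reduces to the claim that, for each $\mathbf{A}$ satisfying \eqref{eq:generalized-Juhl-formula-cond},
\begin{align*}
	\Sigma_{\mathbf{A}}&:=\sum_{\mathbf{B}}\prod_{i=0}^r(B_i!)^2\binom{N+\alpha_i-\beta_i}{B_i}\binom{L-N-2i-\alpha_i-\beta_i}{B_i}\\
	&\,=M!\prod_{n=0}^{M-1}(L-M-n)\prod_{i=1}^r\frac{1}{\sigma_i}\prod_{i=1}^r\frac{1}{L-2M+\sigma_i},
\end{align*}
where $\alpha_i=\sum_{j=1}^iA_j$, $\beta_i=\sum_{j=0}^{i-1}B_j$, $\sigma_i=\sum_{j=1}^i(A_{r+1-j}+1)$, and $\mathbf{B}=(B_0,\ldots,B_r)$ runs over nonnegative integer sequences with $\sum_jB_j=N+\alpha_r$.

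I would prove this by induction on the length $r$ of $\mathbf{A}$. Peeling off the innermost block---the first $B_0$ copies of $\mD$ followed by $\mP_{A_1}$---and applying \cref{le:S(A,B)} to it expresses $\Sigma_{\mathbf{A}}$ as $\sum_{B_0\ge0}(B_0!)^2\binom{N}{B_0}\binom{L-N}{B_0}$ times the value of $\Sigma_{(A_2,\ldots,A_r)}$ at the shifted parameters $M\mapsto M-B_0-1$, $L\mapsto L-2B_0-2$, $N\mapsto N-B_0+A_1$. A direct check shows $L-2M$ and $\sigma_1,\ldots,\sigma_{r-1}$ are invariant under these shifts, so the induction hypothesis lets the factors $\prod_{i<r}\sigma_i^{-1}$ and $\prod_{i<r}(L-2M+\sigma_i)^{-1}$ come out of the $B_0$-sum, and what remains is the single-variable identity
\begin{align*}
	\sum_{j=0}^{N}\frac{N!}{(N-j)!}\cdot\frac{\Gamma(L-N+1)}{\Gamma(L-N-j+1)}\cdot\Gamma(M-j)\,\Gamma(L-M-j)=\frac{M!\,\Gamma(L-M+1)}{(M-N)(L-M-N)}.
\end{align*}
Treating $L$ as an indeterminate and rewriting the Gamma-quotients as Pochhammer symbols, this is the statement that the terminating series ${}_3F_2(-N,\,N-L,\,1;\,1-M,\,1-L+M;\,1)$ is Saalsch\"utzian---the balance condition degenerates to $2-L=2-L$---and hence equals $\tfrac{M(L-M)}{(M-N)(L-M-N)}$ by the Pfaff--Saalsch\"utz theorem; one only needs to note that, because the sum terminates at $j=N<M$, none of the relevant Pochhammer symbols hit a zero. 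The base case $r=0$ forces $M=N$, where $\Sigma_{\mathbf{A}}=(N!)^2\binom{L-N}{N}=N!\prod_{n=0}^{N-1}(L-N-n)$ is immediate. Reassembling the prefactor then gives \eqref{generalized-Juhl-formula}, and as a consistency check one specializes $N=0$, $M=L=k$: using $k-\sigma_i=\sum_{j=1}^{r-i}(A_j+1)$ and cancelling the common zeros in $\prod_n(L-M-n)\prod_i(L-2M+\sigma_i)^{-1}$ recovers the right-hand side of \cref{Juhl-formula}.

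The expansion and the tracking of the shifted parameters $(M',L',N')$ are routine bookkeeping. The one genuinely substantive step is recognizing that, after the induction, the inner $\mathbf{B}$-sum collapses to a Saalsch\"utzian ${}_3F_2$ at unit argument---this is the ``evaluation of a hypergeometric series'' that replaces the Fefferman--Graham recursions and is what lets the unwieldy sum close up into the clean product. A minor technical wrinkle to handle along the way is that for the GJMS specialization $M=L=k$ several of the factors $L-2M+\sigma_i$ in \eqref{generalized-Juhl-formula} vanish while $\prod_n(L-M-n)$ vanishes as well, so the right-hand side must be read as the polynomial obtained after cancelling these zeros before setting $L=k$.
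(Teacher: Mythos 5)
Your proposal is correct and takes essentially the same route as the paper: the same expansion via \eqref{eq:SAB-def}--\eqref{eq:S-prefactor}, the same restriction at $\rho=0$ forcing \eqref{eq:generalized-Juhl-formula-cond}, and then the evaluation of the surviving $\mathbf{B}$-sum. Your induction-with-Pfaff--Saalsch\"utz argument is exactly the paper's appendix identity \eqref{eq:aux-sum-1} (which the paper invokes with the substitutions $\mathbf{A}\mapsto(N,A_1,\ldots,A_r)$, $r\mapsto r+1$, $X\mapsto L+2$, and proves by the same peeling induction reducing to a Saalsch\"utzian ${}_3F_2$), so you have merely inlined that lemma with $N$ kept as an explicit parameter rather than prepended to the sequence.
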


\begin{proof}
	We start by writing $\cD_{M,L}(\rho^N u)$ as
	\begin{align*}
		\cD_{M,L}(\rho^N u) & = \sum_{\mathbf{A}}\sum_{\mathbf{B}} \mM_{2(A_r+1)}\cdots \mM_{2(A_1+1)} \big(S_{\mathbf{A},\mathbf{B}}(N)u\big)\\
		&\quad\times (-1)^{\sum\limits_{j=1}^r A_j} 2^{-\sum\limits_{j=1}^r A_j + \sum\limits_{j=0}^r B_j} \prod_{i=1}^r \frac{1}{(A_{i}!)^2}.
	\end{align*}
	In view of \eqref{eq:B-cond}, $\mathbf{B}=(B_0,B_1,\ldots,B_r)$ is such that
	\begin{align}\label{eq:B-cond-1}
		r + \sum_{j=0}^r B_j = M.
	\end{align}
	In addition, since $D_{M,L}=\cD_{M,L}\big\vert_{\rho=0}$, we further require that the power of $\rho$ in $S_{\mathbf{A},\mathbf{B}}(N)$ reduces to zero. By \eqref{eq:S(A,B)}, we have
	\begin{align*}
		N+\sum_{j=1}^r A_j - \sum_{j=0}^r B_j = 0,
	\end{align*}
	so that $\mathbf{A}=(A_1,\ldots,A_r)$ satisfies
	\begin{align}\label{eq:A-cond-1}
		r + \sum_{j=1}^r A_j = M-N.
	\end{align}
	Running over nonnegative sequences $\mathbf{A}$ and $\mathbf{B}$ restricted as above and invoking \eqref{eq:S(A,B)}, we have
	\begin{align*}
		&D_{M,L}(\rho^N u)\\
		&\quad= \sum_{\mathbf{A}} \mM_{2(A_r+1)}\cdots \mM_{2(A_1+1)} (u)\cdot  (-1)^{M-N-r}\, 2^N \prod_{i=1}^r \frac{1}{(A_{i}!)^2}\\
		&\quad\quad\times \sum_{\mathbf{B}} \prod_{i=0}^{r} \big(B_i!\big)^2 \scalebox{0.8}{$\left(\begin{array}{c}
				N+\sum\limits_{j=1}^i A_j - \sum\limits_{j=0}^{i-1} B_j\\[12pt]
				B_i
			\end{array}\right)$}
		\scalebox{0.8}{$\left(\begin{array}{c}
				L-N-2i-\sum\limits_{j=1}^i A_j - \sum\limits_{j=0}^{i-1} B_j\\[12pt]
				B_i
			\end{array}\right)$}.
	\end{align*}
	For the inner summation on $\mathbf{B}$, we make use of \eqref{eq:aux-sum-1} with the substitutions:
	\begin{align*}
		\mathbf{A} &\mapsto (N,A_1,\ldots,A_r),\\
		r &\mapsto r+1,\\
		N &\mapsto M+1,\\
		X &\mapsto L+2.
	\end{align*}
	Hence,
	\begin{align*}
		&\sum_{\mathbf{B}} \prod_{i=0}^{r} \big(B_i!\big)^2 \scalebox{0.8}{$\left(\begin{array}{c}
				N+\sum\limits_{j=1}^i A_j - \sum\limits_{j=0}^{i-1} B_j\\[12pt]
				B_i
			\end{array}\right)$}
		\scalebox{0.8}{$\left(\begin{array}{c}
				L-N-2i-\sum\limits_{j=1}^i A_j - \sum\limits_{j=0}^{i-1} B_j\\[12pt]
				B_i
			\end{array}\right)$}\\
		&\quad = \big(M!\big)\prod_{n=0}^{M-1}(L-M-n)\prod_{i=1}^r \scalebox{0.75}{$\dfrac{1}{\sum\limits_{j=1}^i (A_{r+1-j}+1)}$}\prod_{i=1}^\ell \scalebox{0.75}{$\dfrac{1}{L-2M+\sum\limits_{j=1}^i (A_{r+1-j}+1)}$},
	\end{align*}
	where we have also used \eqref{eq:B-cond-1} and \eqref{eq:A-cond-1}. This finishes the proof of \eqref{generalized-Juhl-formula}.
\end{proof}

\subsection{Evaluation of $D_{M,L}(\rho^N f)$}

Now we consider $\mathcal{S}_{\mathbf{A},\mathbf{B},M,L}(\rho^N f)$ for general $f=f(\rho)$.

\begin{lemma}\label{le:S(A,B)-f}
	For any nonnegative integer $N$ and any smooth function $f=f(\rho)$,
	\begin{align}\label{eq:S(A,B)-f}
		&\mathcal{S}_{\mathbf{A},\mathbf{B},M,L}(\rho^N f)\\
		&\quad = \sum_{R\ge 0}\rho^{R+N+\sum\limits_{j=1}^r A_j - \sum\limits_{j=0}^r B_j} f^{(R)}\cdot \frac{1}{R!} \sum_{l=0}^R (-1)^{R-l} \binom{R}{l}\notag\\
		&\quad\quad\times \prod_{i=0}^{r} \big(B_i!\big)^2 \scalebox{0.8}{$\left(\begin{array}{c}
				N+r+\sum\limits_{j=1}^i A_j - \sum\limits_{j=0}^{i-1} B_j\\[12pt]
				B_i
			\end{array}\right)$}
		\scalebox{0.8}{$\left(\begin{array}{c}
				L-N-r-2i-\sum\limits_{j=1}^i A_j - \sum\limits_{j=0}^{i-1} B_j\\[12pt]
				B_i
			\end{array}\right)$},\notag
	\end{align}
	where $f^{(R)}$ is the $R$-th order derivative with respect to $\rho$. In addition, the above summand vanishes for all $R>2\sum\limits_{j=0}^r B_j$.
\end{lemma}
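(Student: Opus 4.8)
The plan is to avoid expanding $\mathcal{S}_{\mathbf{A},\mathbf{B},M,L}(\rho^N f)$ block by block, and instead to reduce \eqref{eq:S(A,B)-f} to the monomial case $f=\rho^m$, $m\in\bN$, where it is governed by \cref{le:S(A,B)}, and then to recognize the resulting identity as Newton's forward difference interpolation formula for a single polynomial.

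First I would repackage \cref{le:S(A,B)}. Writing $S^{A}_{i}:=\sum_{j=1}^{i}A_j$ and $S^{B}_{i}:=\sum_{j=0}^{i}B_j$ (so $S^{B}_{-1}=0$ and $S^{B}_{r}=M-r$), set
\begin{equation*}
	Q(t):=\prod_{i=0}^{r}(B_i!)^2\binom{t+S^{A}_{i}-S^{B}_{i-1}}{B_i}\binom{L-t-2i-S^{A}_{i}-S^{B}_{i-1}}{B_i},
\end{equation*}
a polynomial in $t$ (of degree $2\sum_{j=0}^{r}B_j$, by inspection of its top-degree terms). Then \cref{le:S(A,B)} states exactly that $S_{\mathbf{A},\mathbf{B},M,L}(N')=\rho^{\,N'+S^{A}_{r}-S^{B}_{r}}\,Q(N')$ for every $N'\in\bN$, the vanishing clause there being subsumed in the vanishing of a binomial factor of $Q$. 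The first key observation is that the inner alternating sum in \eqref{eq:S(A,B)-f} is, up to the factor $1/R!$, the $R$-th forward difference of $Q$ based at $N$:
\begin{equation*}
	\frac{1}{R!}\sum_{l=0}^{R}(-1)^{R-l}\binom{R}{l}\,Q(N+l)=\frac{1}{R!}\,(\Delta^{R}Q)(N),\qquad \Delta g(t):=g(t+1)-g(t).
\end{equation*}
Since $(\Delta^{R}Q)(N)=0$ whenever $R>\deg Q$, this already proves the asserted vanishing of the summand in \eqref{eq:S(A,B)-f} for $R>2\sum_{j}B_j$; in particular the right-hand side of \eqref{eq:S(A,B)-f} defines a differential operator in $f$ of order at most $2\sum_j B_j$.

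Next I would note that $f\mapsto\mathcal{S}_{\mathbf{A},\mathbf{B},M,L}(\rho^N f)$ is likewise a linear differential operator in $f$ with polynomial coefficients, of order at most $2\sum_j B_j$, being a composition of the second-order operators $\mD_\bullet$ (there are $\sum_j B_j$ of them), the zeroth-order multipliers $\mP_\bullet$, and $f\mapsto\rho^N f$. A linear differential operator of order $\le d$ with polynomial coefficients is determined by its values on $1,\rho,\dots,\rho^{d}$ (read off the coefficient functions successively), so it suffices to verify \eqref{eq:S(A,B)-f} for $f=\rho^{m}$ with $m\in\bN$. For such $f$ one has $\rho^N f=\rho^{N+m}$, so the left-hand side equals $S_{\mathbf{A},\mathbf{B},M,L}(N+m)=\rho^{\,N+m+S^{A}_{r}-S^{B}_{r}}\,Q(N+m)$ by \cref{le:S(A,B)}; on the right-hand side, using $f^{(R)}=\frac{m!}{(m-R)!}\rho^{m-R}$, all powers of $\rho$ combine to $\rho^{\,N+m+S^{A}_{r}-S^{B}_{r}}$ and one is left with $\rho^{\,N+m+S^{A}_{r}-S^{B}_{r}}\sum_{R\ge0}\binom{m}{R}(\Delta^{R}Q)(N)$. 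These agree by the forward difference interpolation formula $Q(N+m)=\sum_{R\ge0}\binom{m}{R}(\Delta^{R}Q)(N)$, valid for the polynomial $Q$ and all $m\in\bN$, which finishes the proof.

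I expect the only genuinely non-routine point to be the first one: spotting that the opaque alternating sum is a finite difference of a single polynomial $Q$, since this is what simultaneously yields the stated vanishing, the legitimacy of reducing to monomials, and the appearance of Newton interpolation; everything after it is bookkeeping with the partial sums $S^{A}_{i},S^{B}_{i}$. Should this clean route hit an unforeseen snag, I would fall back on a direct computation: expand $\mathcal{S}_{\mathbf{A},\mathbf{B},M,L}(\rho^N f)$ one $\mD$-block at a time via the twisted Leibniz rule $\mD_j(gf)=\mD_j(g)\,f+(jg-2\rho\,g')\,f'-\rho\,g\,f''$ and collapse the resulting nested sums with the combinatorial identities of \cref{sec:appendix} — correct, but considerably more laborious.
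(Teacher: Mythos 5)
Your proposal is correct, and it reaches the result by a route that is organized differently from the paper's, though both rest on the same two pillars: \cref{le:S(A,B)} and the observation that only $\sum_{j}B_j$ second-order operators $\mD$ act on $f$. (Note that you have silently read the ``$r$'' inside the binomial coefficients of the statement as the summation index ``$l$''; that is indeed the intended reading — the printed statement has a typo, as the paper's own proof and its later use in \cref{D-f-u} with $l=R$ confirm.) The paper first establishes the structural expansion $\mathcal{S}_{\mathbf{A},\mathbf{B}}(\rho^N f)=\sum_{R\ge0}c_R\,\rho^{R+N+\sum_j A_j-\sum_j B_j}f^{(R)}$, together with the bound $R\le 2\sum_j B_j$, directly from the explicit action $\mD_\ell(\rho^n f)=\rho^{n-1}f\cdot n(\ell+1-n)+\rho^n f'(\ell-2n)-\rho^{n+1}f''$, and then extracts $c_R$ by the test functions $f=\frac{1}{R!}(\rho-1)^R$ evaluated at $\rho=1$, which after a binomial expansion reduces to \cref{le:S(A,B)}. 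You instead package \cref{le:S(A,B)} into the single polynomial $Q$, recognize the claimed coefficient as $\frac{1}{R!}(\Delta^R Q)(N)$, obtain the vanishing for $R>2\sum_j B_j$ for free from $\deg Q=2\sum_j B_j$, reduce the identity to $f=\rho^m$ by the uniqueness of an order-$\le d$ operator given its values on $1,\rho,\dots,\rho^d$, and conclude with Newton's forward-difference formula; the paper's test-function trick and your Newton expansion are two faces of the same finite-difference computation. What the paper's organization buys is the sharper structural fact that each $f^{(R)}$ is accompanied by exactly one explicit power of $\rho$, which it reuses in the proof of \cref{D-f-u}; what yours buys is transparency of the combinatorics and an automatic proof of the vanishing clause. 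One cosmetic point: a priori the right-hand side could contain negative powers of $\rho$, so your determination-by-monomials step should be carried out on $\rho\neq0$ and then extended by continuity (equality with the polynomial-coefficient left-hand side forces those coefficients to vanish, exactly as the paper later argues for $c_R$); this is a one-line patch, not a gap.
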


\begin{proof}
	For arbitrary $\ell$ and $n$,
	\begin{align*}
		\mD_\ell(\rho^n f) = \rho^{n-1}f \cdot n(\ell+1-n) + \rho^n f'\cdot (\ell-2n) + \rho^{n+1}f''\cdot (-1).
	\end{align*}
	We observe that after applying the $\mD$-operator, the derivative order of $f$ changes by $i\in\{0,1,2\}$, and accordingly the power of $\rho$ changes by $i-1$. In $\mathcal{S}_{\mathbf{A},\mathbf{B}}$, we are to have $\sum_{j=0}^r B_j$ applications of $\mD$, and hence the derivative order of $f$ ranges over the interval $\big[0,2\sum_{j=0}^r B_j\big]$. This, in particular, confirms the second part of our result.
	
	Now we write
	\begin{align}\label{eq:S(A,B)-f-expansion}
		\mathcal{S}_{\mathbf{A},\mathbf{B}}(\rho^N f) = \sum_{R\ge 0} c_R\cdot \rho^{R+N+\sum\limits_{j=1}^r A_j - \sum\limits_{j=0}^r B_j} f^{(R)}.
	\end{align}
	For the evaluation of the coefficients $c_R$, our trick is to choose $f(\rho) = \frac{1}{R!}(\rho-1)^R$. The takeaway is that the expression
	\begin{align*}
		\partial^l_\rho\, \tfrac{1}{R!}(\rho-1)^R \Big\vert_{\rho=1}
	\end{align*}
	equals $1$ when $l=R$, and vanishes for all other $l$. Hence,
	\begin{align*}
		c_R &= \sum_{l\ge 0} c_l\cdot \partial^l_\rho\, \tfrac{1}{R!}(\rho-1)^R \Big\vert_{\rho=1}\\
		&= \sum_{l\ge 0} c_l\cdot \rho^{l+N+\sum\limits_{j=1}^r A_j - \sum\limits_{j=0}^r B_j} \partial^l_\rho\, \tfrac{1}{R!}(\rho-1)^R \Bigg\vert_{\rho=1}\\
		&= \mathcal{S}_{\mathbf{A},\mathbf{B}}\big(\rho^N \tfrac{1}{R!}(\rho-1)^R\big) \Big\vert_{\rho=1}\\
		&= \frac{1}{R!}\sum_{l=0}^R (-1)^{R-l} \binom{R}{l} \mathcal{S}_{\mathbf{A},\mathbf{B}}(\rho^{N+l}) \Bigg\vert_{\rho=1}.
	\end{align*}
	Invoking \eqref{eq:S(A,B)} gives the desired relation.
\end{proof}

As a consequence, we also arrive at an explicit expression of $D_{M,L}\circ \mP_N (fu)$ for $u$ independent of $\rho$, which reduces to $D_{M,L}(\rho^N f)$ by taking $u\equiv 1$.

\begin{corollary}\label{D-f-u}
	For any nonnegative integer $N$ and any smooth function $f=f(\rho)$,
	\begin{align}\label{eq:D-f-u}
		&D_{M,L}\circ \mP_N( f u)\\
		&= \sum_{R}\sum_{\mathbf{A}} \mM_{2(A_r+1)}\cdots \mM_{2(A_1+1)} (f^{(R)}u)\cdot  (-1)^{M-N-R-r}\, 2^{N+R} \frac{1}{R!}\prod_{i=1}^r \frac{1}{(A_{i}!)^2}\notag\\
		&\quad\times \big(M!\big)\prod_{n=0}^{M-1}(L-M-n)\prod_{i=1}^r \scalebox{0.75}{$\dfrac{1}{\sum\limits_{j=1}^i (A_{r+1-j}+1)}$}\prod_{i=1}^r \scalebox{0.75}{$\dfrac{1}{L-2M+\sum\limits_{j=1}^i (A_{r+1-j}+1)}$},\notag
	\end{align}
	where the summation runs over all nonnegative integers $R$ and all sequences $\mathbf{A}=(A_{1},\ldots,A_{r})$ of nonnegative integers such that
	\begin{align}\label{eq:R-cond}
		R + r + \sum_{j=1}^r A_j = M-N.
	\end{align}
	Here by abuse of notation, we read $f^{(R)}$ as $f^{(R)}\big|_{\rho=0}$.
\end{corollary}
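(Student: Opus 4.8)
The plan is to deduce Corollary~\ref{D-f-u} from Lemma~\ref{le:S(A,B)-f} by the same bookkeeping used in the proof of Corollary~\ref{coro:D-u}, keeping track of the extra derivative index $R$. First I would expand $\cD_{M,L}(\rho^N f u)$ as a sum over pairs of sequences $\mathbf{A}=(A_1,\ldots,A_r)$ and $\mathbf{B}=(B_0,\ldots,B_r)$ of nonnegative integers satisfying $r+\sum_{j=0}^r B_j=M$, attaching to each term $\mS_{\mathbf{A},\mathbf{B}}(\rho^N f u)$ the coefficient recorded in~\eqref{eq:S-prefactor}, namely $(-1)^{\sum A_j}2^{-\sum A_j+\sum B_j}\prod_i (A_i!)^{-2}$ times $\mM_{2(A_r+1)}\cdots\mM_{2(A_1+1)}$, with $u$ pulled outside the $\mD$- and $\mP$-operators since it is $\rho$-independent. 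Applying Lemma~\ref{le:S(A,B)-f} rewrites each $\mS_{\mathbf{A},\mathbf{B}}(\rho^N f)$ as a sum over $R\ge 0$ of $\rho^{R+N+\sum A_j-\sum B_j}f^{(R)}$ times the displayed binomial product; evaluation at $\rho=0$ forces the exponent to vanish, i.e.\ $R+N+\sum_{j=1}^r A_j=\sum_{j=0}^r B_j$, which combined with the constraint on $\mathbf{B}$ yields precisely $R+r+\sum_{j=1}^r A_j=M-N$, matching~\eqref{eq:R-cond}.

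The core computation is then the inner sum over $\mathbf{B}$ of
\[
\frac{1}{R!}\sum_{l=0}^R(-1)^{R-l}\binom{R}{l}\prod_{i=0}^r (B_i!)^2
\scalebox{0.8}{$\left(\begin{array}{c} N+r+\sum_{j=1}^i A_j-\sum_{j=0}^{i-1}B_j\\ B_i\end{array}\right)$}
\scalebox{0.8}{$\left(\begin{array}{c} L-N-r-2i-\sum_{j=1}^i A_j-\sum_{j=0}^{i-1}B_j\\ B_i\end{array}\right)$}.
\]
Here I would note that the $l$-sum can be pulled to the front and that the $\mathbf{B}$-sum is, after the shift $\mathbf{A}\mapsto(N+r,A_1,\ldots,A_r)$, $r\mapsto r+1$, $N\mapsto M+1-R$, $X\mapsto L+2$, exactly an instance of the combinatorial identity~\eqref{eq:aux-sum-1} from the appendix (the same identity invoked in Corollary~\ref{coro:D-u}); carrying out that substitution and simplifying using $r+\sum_{j=0}^r B_j=M$ and $R+r+\sum_{j=1}^r A_j=M-N$ produces the factor $(M!)\prod_{n=0}^{M-1}(L-M-n)$ together with the two products of reciprocals over $i$. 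The residual $l$-sum $\sum_{l=0}^R(-1)^{R-l}\binom{R}{l}2^{l}$ collapses to $1$ by the binomial theorem, accounting for the $2^{N+R}$ and the sign $(-1)^{M-N-R-r}$ after combining with the $(-1)^{\sum A_j}$ and $2^{-\sum A_j+\sum B_j}$ prefactors.

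Finally I would repackage: the operator $\mM_{2(A_r+1)}\cdots\mM_{2(A_1+1)}$ is applied to $f^{(R)}u$ rather than $u$ because the $f^{(R)}$ produced by Lemma~\ref{le:S(A,B)-f} sits inside the $\mD$/$\mP$ string but is a function of $\rho$ alone, so after setting $\rho=0$ it is the function $f^{(R)}\big|_{\rho=0}$ multiplying $u$, and the $\mM$-operators act on the product; this explains the $f^{(R)}u$ argument and the abuse-of-notation remark. Summing over all admissible $\mathbf{A}$ and $R$ gives~\eqref{eq:D-f-u}. The main obstacle I anticipate is purely bookkeeping: correctly threading the shift $N\mapsto M+1-R$ through the hypergeometric identity~\eqref{eq:aux-sum-1} so that the arguments $N+r+\sum A_j$ and $L-N-r-2i-\sum A_j$ (note the extra $+r$ relative to Lemma~\ref{le:S(A,B)}) line up with the hypotheses of that identity, and then verifying that the resulting products are independent of $R$ so that the $R$-sum factors cleanly as claimed. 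No genuinely new idea beyond Corollary~\ref{coro:D-u} and Lemma~\ref{le:S(A,B)-f} should be needed.
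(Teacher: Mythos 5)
Your overall skeleton (expand via \eqref{eq:S-prefactor}, invoke Lemma~\ref{le:S(A,B)-f}, impose the $\rho=0$ constraint to get \eqref{eq:R-cond}, then evaluate the $\mathbf{B}$-sum by \eqref{eq:aux-sum-1}) is the same as the paper's, but the crucial middle step --- evaluating the coefficient $c_R$, i.e.\ handling the alternating $l$-sum --- is wrong. Reading the display in Lemma~\ref{le:S(A,B)-f} literally (the ``$N+r$'' there is evidently a typo for ``$N+l$'', as its proof shows: the binomial product is $S_{\mathbf{A},\mathbf{B}}(N+l)\big|_{\rho=1}$), the product you wrote does not depend on $l$, so pulling the $l$-sum out gives $\sum_{l=0}^R(-1)^{R-l}\binom{R}{l}=(1-1)^R$, which kills every term with $R\ge 1$ and contradicts the corollary. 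Your rescue --- that a factor $2^l$ appears so the sum becomes $\sum_l(-1)^{R-l}\binom{R}{l}2^l=1$ --- has no source: the only power of $2$ in the bookkeeping is $2^{-\sum A_j+\sum B_j}$ from \eqref{eq:S-prefactor}, which under $\sum_{j=0}^r B_j=R+N+\sum_{j=1}^r A_j$ equals $2^{N+R}$ and is independent of $l$. The missing idea, which is exactly what the paper uses, is the vanishing statement in the second part of Lemma~\ref{le:S(A,B)}: once the $\rho=0$ constraint forces $R+N+\sum_j A_j=\sum_j B_j$, every $l<R$ satisfies $l+N+\sum_j A_j<\sum_j B_j$, so $S_{\mathbf{A},\mathbf{B}}(N+l)$ vanishes and the $l$-sum collapses to its single term $l=R$, giving $c_R=\frac{1}{R!}S_{\mathbf{A},\mathbf{B}}(N+R)\big|_{\rho=1}$, whose binomials carry $N+R$ (not a constant in $l$, and not $N+r$).

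This error propagates into your substitution in \eqref{eq:aux-sum-1}: the correct one is the Corollary~\ref{coro:D-u} substitution with $N$ replaced by $N+R$, namely $\mathbf{A}\mapsto(N+R,A_1,\ldots,A_r)$, $r\mapsto r+1$, identity-$N\mapsto M+1$ (since $(N+R+1)+r+\sum_j A_j=M+1$ by \eqref{eq:R-cond}), $X\mapsto L+2$, which is what produces $(M!)\prod_{n=0}^{M-1}(L-M-n)$ and makes the two reciprocal products manifestly independent of $R$. Your choices $\mathbf{A}\mapsto(N+r,A_1,\ldots,A_r)$ and $N\mapsto M+1-R$ are mutually inconsistent (the prepended sequence forces identity-$N=M+r+1-R$) and do not yield the stated factors. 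So the proposal as written has a genuine gap: it never invokes the vanishing part of Lemma~\ref{le:S(A,B)}, and the step that replaces it does not hold.
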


\begin{proof}
	Note that $\cD_{M,L}\circ \mP_N( f u)$ can be written as
	\begin{align*}
		&\cD_{M,L}\circ \mP_N(f u)\\
		&\quad = \sum_{\mathbf{A}}\sum_{\mathbf{B}} \mM_{2(A_r+1)}\cdots \mM_{2(A_1+1)} \big(\mathcal{S}_{\mathbf{A},\mathbf{B},M,L}(\rho^N f)u\big) \\
		&\quad\quad\times (-1)^{\sum\limits_{j=1}^r A_j} 2^{-\sum\limits_{j=1}^r A_j + \sum\limits_{j=0}^r B_j} \prod_{i=1}^r \frac{1}{(A_{i}!)^2}\\
		&\quad=\sum_{\mathbf{A}}\sum_{\mathbf{B}}\sum_{R} \mM_{2(A_r+1)}\cdots \mM_{2(A_1+1)}\left(\rho^{R+N+\sum\limits_{j=1}^r A_j - \sum\limits_{j=0}^r B_j} f^{(R)}(\rho) \,u\right)\\
		&\quad\quad\times (-1)^{\sum\limits_{j=1}^r A_j} 2^{-\sum\limits_{j=1}^r A_j + \sum\limits_{j=0}^r B_j} \prod_{i=1}^r \frac{1}{(A_{i}!)^2} \cdot c_R,
	\end{align*}
	where we have utilized \eqref{eq:S(A,B)-f-expansion}. Still, $\mathbf{B}=(B_0,B_1,\ldots,B_\ell)$ is such that
	\begin{align}\label{eq:B-cond-2}
		r + \sum_{j=0}^r B_j = M.
	\end{align}
	Meanwhile, we have shown in the proof of Lemma \ref{le:S(A,B)-f} that
	\begin{align*}
		c_R = \frac{1}{R!}\sum_{l=0}^R (-1)^{R-l} \binom{R}{l} S_{\mathbf{A},\mathbf{B}}(N+l) \bigg\vert_{\rho=1}.
	\end{align*}
	Thus, if $R$ is such that
	\begin{align*}
		R+N+\sum\limits_{j=1}^r A_j - \sum\limits_{j=0}^r B_j < 0,
	\end{align*}
	we must have the vanishing of $S_{\mathbf{A},\mathbf{B}}(N+l)$ for all $0\le l\le R$ by the second part of Lemma \ref{le:S(A,B)}, and thus the vanishing of $c_R$. Now for $D_{M,L}=\cD_{M,L}\big\vert_{\rho=0}$, it suffices to restrict
	\begin{align}\label{eq:R-cond-2}
		R+N+\sum\limits_{j=1}^r A_j - \sum\limits_{j=0}^r B_j=0
	\end{align}
	so that $\mathbf{A}=(A_1,\ldots,A_r)$ and $R$ satisfy
	\begin{align}\label{eq:A-R-cond-2}
		R + r + \sum_{j=1}^r A_j = M-N.
	\end{align}
	With the additional restriction for $R$ in \eqref{eq:R-cond-2}, we further find that when $l<R$,
	\begin{align*}
		l+N+\sum\limits_{j=1}^r A_j < \sum\limits_{j=0}^r B_j,
	\end{align*}
	which, in light of the second part of Lemma \ref{le:S(A,B)}, implies the vanishing of $S_{\mathbf{A},\mathbf{B}}(N+l)$ for these $l$. Hence, for $R$ restricted by \eqref{eq:R-cond-2}, we always have
	\begin{align*}
		c_R &= \frac{1}{R!} S_{\mathbf{A},\mathbf{B}}(N+R) \Big\vert_{\rho=1}\\
		&= \frac{1}{R!}\prod_{i=0}^{r} \big(B_i!\big)^2 \scalebox{0.8}{$\left(\begin{array}{c}
				N+R+\sum\limits_{j=1}^i A_j - \sum\limits_{j=0}^{i-1} B_j\\[12pt]
				B_i
			\end{array}\right)$}
		\scalebox{0.8}{$\left(\begin{array}{c}
				L-N-R-2i-\sum\limits_{j=1}^i A_j - \sum\limits_{j=0}^{i-1} B_j\\[12pt]
				B_i
			\end{array}\right)$},
	\end{align*}
	by invoking \eqref{eq:S(A,B)}. Now running over nonnegative integers $R$ and nonnegative sequences $\mathbf{A}$ and $\mathbf{B}$ as restricted by \eqref{eq:B-cond-2} and \eqref{eq:A-R-cond-2}, we have
	\begin{align*}
		&D_{M,L}\circ \mP_N( f u)\\
		&= \sum_R \sum_{\mathbf{A}} \mM_{2(A_r+1)}\cdots \mM_{2(A_1+1)} (f^{(R)}u)\cdot  (-1)^{M-N-R-r}\, 2^{N+R} \frac{1}{R!} \prod_{i=1}^r \frac{1}{(A_{i}!)^2}\\
		&\quad\times \sum_{\mathbf{B}} \prod_{i=0}^{r} \big(B_i!\big)^2 \scalebox{0.8}{$\left(\begin{array}{c}
				N+R+\sum\limits_{j=1}^i A_j - \sum\limits_{j=0}^{i-1} B_j\\[12pt]
				B_i
			\end{array}\right)$}
		\scalebox{0.8}{$\left(\begin{array}{c}
				L-N-R-2i-\sum\limits_{j=1}^i A_j - \sum\limits_{j=0}^{i-1} B_j\\[12pt]
				B_i
			\end{array}\right)$}.
	\end{align*}
	Note that the inner summation on $\mathbf{B}$ is exactly the one in the proof of Corollary \ref{coro:D-u} with $N$ replaced by $N+R$. The claimed result therefore follows.
\end{proof}
\section{Juhl's formula revisited}\label{sec:FG}

By taking $N= 0$ and $L=M=k$ in $D_{M,L}\circ \mP_N(u)$, we have
\begin{align}
	D_{k,k}\circ \mP_0(u) = \mR_{1-k}\mR_{3-k}\cdots \mR_{k-3}\mR_{k-1} (u)\Big\vert_{\rho=0} = P_{2k}(u),
\end{align}
which is exactly the GJMS operator of order $2k$. Applying this specialization to Corollary \ref{coro:D-u}, we immediately have Juhl's formula in Theorem \ref{Juhl-formula}.

Now a natural question is that {\itshape are there other formally self-adjoint GJMS operators?} We shall give an answer in the next theorem.

\begin{theorem}\label{th:GJMS-self-adj}
	$D_{M,L}\circ \mP_N$ is a formally self-adjoint operator if and only if $L=M+N$, which is the GJMS operator of order $2(M-N)$ up to a constant.
\end{theorem}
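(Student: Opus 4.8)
The plan is to read the operator off Corollary~\ref{coro:D-u}, which presents $D_{M,L}\circ\mP_N$ as a finite sum $\sum_{\mathbf{A}}c(\mathbf{A};L)\,\mM_{2(A_r+1)}\cdots\mM_{2(A_1+1)}$ over sequences $\mathbf{A}=(A_1,\dots,A_r)$ of nonnegative integers with $r+\sum_j A_j=M-N$, whose coefficient $c(\mathbf{A};L)$ is an explicit product carrying $\prod_{n=0}^{M-1}(L-M-n)$ in the numerator and the partial sums $S_i:=\sum_{j=1}^{i}(A_{r+1-j}+1)$ (with $S_r=M-N$) in the denominator. Two observations organize everything. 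First, each $\mM_{2(j+1)}$ is formally self-adjoint with respect to $\dvol_g$, so the formal adjoint of $\mM_{2(A_r+1)}\cdots\mM_{2(A_1+1)}$ is the composition attached to the reversed word $\mathbf{A}^{\mathrm{rev}}=(A_r,\dots,A_1)$; hence $D_{M,L}\circ\mP_N$ is formally self-adjoint once $c(\mathbf{A};L)=c(\mathbf{A}^{\mathrm{rev}};L)$ for every $\mathbf{A}$, and conversely this is forced once one knows the compositions $\mM_{2(A_r+1)}\cdots\mM_{2(A_1+1)}$ are linearly independent as universal operators. Second, the $S_i$ are distinct and lie in $\{1,\dots,M-N\}\subseteq\{1,\dots,M\}$, so each denominator factor $L-2M+S_i$ cancels the numerator factor with $n=M-S_i$; thus $c(\mathbf{A};L)$ is in fact a polynomial in $L$, and only the factor coming from $S_r$, namely $L-M-N$, is distinguished.

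For the ``if'' direction I put $L=M+N$: the factor $L-M-N$ then cancels precisely the numerator factor $n=N$, leaving $c(\mathbf{A};M+N)$ finite and nonzero, with surviving denominator equal (up to sign) to the product of all prefix partial sums $S_1,\dots,S_r$ of $(A_r+1,\dots,A_1+1)$ together with all \emph{proper} suffix partial sums $S_r-S_1,\dots,S_r-S_{r-1}$. The crucial point is that this multiset---$\{\text{full sum}\}\cup\{\text{proper prefix sums}\}\cup\{\text{proper suffix sums}\}$ of the word $(A_r+1,\dots,A_1+1)$---is invariant under reversal of the word, which gives $c(\mathbf{A};M+N)=c(\mathbf{A}^{\mathrm{rev}};M+N)$ and hence formal self-adjointness. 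Comparing the simplified coefficient term by term with Juhl's formula for $P_{2(M-N)}$ in Theorem~\ref{Juhl-formula}, the ratio turns out to be the $\mathbf{A}$-independent constant $\tfrac{2^{N}M!\,N!}{(M-N)!}$, so in fact $D_{M,M+N}\circ\mP_N=\tfrac{2^{N}M!\,N!}{(M-N)!}\,P_{2(M-N)}$, which is the stated identification with the GJMS operator (up to a nonzero constant) and re-establishes self-adjointness.

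For the ``only if'' direction I argue contrapositively. If $M-N\le 2$ every admissible word is a palindrome and the operator is automatically self-adjoint for all $L$, so the interesting range is $M-N\ge 3$; moreover for $L$ in the finite set $\{M,\dots,M+N-1\}$ one checks $c(\mathbf{A};L)=0$ for every $\mathbf{A}$, i.e.\ $D_{M,L}\circ\mP_N\equiv 0$, and these degenerate values are understood to be excluded. For $M-N\ge 3$ I test the single non-palindromic word $\mathbf{A}^{\ast}=(1,0,\dots,0)$ of length $M-N-1$; a short computation from Corollary~\ref{coro:D-u} gives, with $\Pi:=\prod_{s=M-N+1}^{M}(L-2M+s)$ and a common nonzero constant $\kappa$,
\[
c(\mathbf{A}^{\ast};L)=\kappa\,(M-N-1)(L-M-N-1)\,\Pi,\qquad c(\mathbf{A}^{\ast\,\mathrm{rev}};L)=\kappa\,(L-2M+1)\,\Pi .
\]
Equating these and cancelling $\Pi$ (nonzero off the degeneracy set) forces $(M-N-1)(L-M-N-1)=L-2M+1$, whose unique solution is $L=M+N$. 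Hence for $L\neq M+N$ one has $c(\mathbf{A}^{\ast};L)\neq c(\mathbf{A}^{\ast\,\mathrm{rev}};L)$, and by the linear independence recalled above $D_{M,L}\circ\mP_N$ is not formally self-adjoint.

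The step I expect to be the genuine obstacle is this linear independence: all the $\mM_{2(j+1)}$ are second-order, so many of the compositions share the same order and even the same principal symbol, and independence must be extracted from a generic-jet or subprincipal-symbol computation (it is implicit already in the meaningfulness of Juhl's and Fefferman--Graham's formulas). An alternative that sidesteps it entirely is the commutation identity $\mR_j\circ\mP_N=2N(j-N+1)\,\mP_{N-1}+\mP_N\circ\mR_{j-2N}$, which can be iterated to show directly that $D_{M,L}\circ\mP_N=C(M,N,L)\,\bigl(D_{M-N,\,L-2N}\circ\mP_0\bigr)$ with $C(M,N,M+N)\neq 0$, reducing both directions to the cleaner case $N=0$; the bookkeeping of the degenerate values of $L$ remains a minor but real nuisance in either route.
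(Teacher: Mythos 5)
Your argument is correct and follows essentially the same route as the paper's proof: read the coefficients off Corollary~\ref{coro:D-u}, observe that at $L=M+N$ the factors $L-2M+\sum_{j\le i}(A_{r+1-j}+1)$ become (negatives of) the complementary partial sums while the $i=r$ factor cancels against the numerator, so the coefficients are reversal-symmetric and reproduce Juhl's formula up to the constant $2^{N}M!\,N!/(M-N)!$, and conversely the symmetry $\mC_{\mathbf{A}}=\mC_{\mathbf{A}^{-1}}$ forces $L=M+N$. In fact your write-up is more detailed than the paper's own one-paragraph proof (explicit test word for the ``only if'' direction, the exact proportionality constant, and the degenerate cases $M-N\le 2$ and $L\in\{M,\dots,M+N-1\}$ where the literal ``only if'' needs the exclusions you state), and the linear-independence of the $\mM$-compositions that you flag as the remaining obstacle is assumed implicitly in the paper's argument as well.
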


\begin{proof}
	If $D_{M,L}$ is formally self-adjoint, from the self-adjointness of each $\mM_{2(N+1)}$, it follows that the coefficient $\mC_{\mathbf{A}}$ of $\mM_{2(A_r+1)}\cdots \mM_{2(A_1+1)}$ should satisfy $\mC_{\mathbf{A}}=\mC_{\mathbf{A}^{-1}}$ with $\mathbf{A}^{-1}=(A_r,\ldots,A_1)$, which implies that 
	\begin{align*}
		L=M+N.
	\end{align*}
	In this case, for $1\le i\le r-1$,
	\begin{align*}
		L-2M+\sum\limits_{j=1}^i (A_{r+1-j}+1)=-\sum\limits_{j=i+1}^r (A_{r+1-j}+1),
	\end{align*}
	and
	\begin{align*}
		L-2M+\sum\limits_{j=1}^r (A_{r+1-j}+1)=L-M-N
	\end{align*}
	will be cancelled by $\prod_{n=0}^{M-1}(L-M-n)$. The desired conclusion follows immediately.
\end{proof}
\section{Formal self-adjointness of $D_{2k}$}\label{sec:selfadj-2}

Recall from Section~\ref{sec:bg} that the curved Ovsienko--Redou operators $D_{2k}$ are determined ambiently by
\begin{align*}
    D_{2k}(u\otimes v):=\cD_{2k}(\cu \otimes \cv) \Big\vert_{\tau=1, \rho=0},
\end{align*}
where, in light of Lemma \ref{le:cD-2k},
\begin{align*}
	\cD_{2k}(\cu \otimes \cv) = \sum_{r+s+t=k} a_{r,s,t} \cDelta^r\left( (\cDelta^s\cu)(\cDelta^t\cv) \right)
\end{align*}
with
\begin{align*}
	\cu=\tau^{\gamma_k}u,\qquad  \cv=\tau^{\gamma_k}v.
\end{align*}
Here
\begin{align*}
	\gamma_k=-\frac{n}{3}+\frac{2k}{3}.
\end{align*}
For convenience, we further set
\begin{equation*}
	L_k:=\gamma_k+\frac{n}{2} = \frac{n}{6}+\frac{2k}{3},
\end{equation*}
which simplifies $a_{r,s,t}$ as
\begin{align*}
	a_{r,s,t} = \binom{k}{r,s,t}\frac{\Gamma(L_k-r)\Gamma(L_k-s)\Gamma(L_k-t)}{\Gamma(L_k-k)\Gamma(L_k)^2}.
\end{align*}
In view of \eqref{Laplacian-expansion}, we rewrite the $(\cDelta\vert_{\tau=1})$-operators in terms of the $\mR$-operators and arrive at
\begin{align}
	&D_{2k}(u \otimes v)\\
	&\qquad= \sum_{r+s+t=k} a_{r,s,t} \mR_{-L_k+1}\cdots \mR_{-L_k+2r-3}\mR_{-L_k+2r-1}\notag\\
	&\qquad\quad \big( \mR_{L_k+1-2s}\cdots \mR_{L_k-3}\mR_{L_k-1} (u)\mR_{L_k+1-2t}\cdots \mR_{L_k-3}\mR_{L_k-1} (v)\big)\Big\vert_{\rho=0}.\notag
\end{align}
This was already presented in \eqref{eq:D-2k}. It is also clear that $D_{2k}$ can be specialized from
\begin{align*}
	&D_{U,V,L,K^\ssfu,K^\ssfv}(u\otimes v)\\
	&\qquad:= \sum_{\substack{M^{\ssfu},M^{\ssfv},M'\ge 0\\ M^{\ssfu}+M^{\ssfv}+M'=U}} \frac{\Gamma(U+K^\ssfu+1)\Gamma(U+K^\ssfv+1)}{\Gamma(M^\ssfu+K^\ssfu+1)\Gamma(M^\ssfv+K^\ssfv+1)\Gamma(M'+1)}\notag\\
	&\qquad\ \quad\times \frac{\Gamma(L-M^\ssfu)\Gamma(L-M^\ssfv)\Gamma(L+V-M')}{\Gamma(L-U)\Gamma(L)^2}\notag\\
	&\qquad\ \quad\times D_{[M',-L-V+2M'],[M^{\ssfu},L-K^{\ssfu}],[M^{\ssfv},L-K^{\ssfv}]}(u\otimes v)\notag
\end{align*}
by observing that
\begin{align}
	D_{2k}(u\otimes v) = \frac{1}{k!}D_{k,0,L_k,0,0}(u\otimes v).
\end{align}

To expand $D_{U,V,L,K^\ssfu,K^\ssfv}$, we begin with the operators
\begin{align*}
	D_{[M',L'],[M^{\ssfu},L^{\ssfu}],[M^{\ssfv},L^{\ssfv}]}(u\otimes v) := \cD_{M',L'}\big(\cD_{M^{\ssfu},L^{\ssfu}}(u)\cD_{M^{\ssfv},L^{\ssfv}}(v)\big)\Big\vert_{\rho=0}.
\end{align*}
Recall that $u$ and $v$ are smooth functions independent of $\rho$ and let $N^{\ssfu}$ and $N^{\ssfv}$ be nonnegative integers.

For the inner layer, we have two multiplicands, and we evaluate them separately. By \eqref{eq:S-prefactor} and \eqref{eq:S(A,B)},
\begin{align*}
	&\mR_{L^{\ssfu}+1-2M^{\ssfu}} \cdots \mR_{L^{\ssfu}-3} \mR_{L^{\ssfu}-1} (\rho^{N^{\ssfu}} u)\\
	&\quad = \sum_{\mathbf{A}^{\ssfu}}\mM_{2(A_{r^{\ssfu}}^{\ssfu}+1)}\cdots \mM_{2(A_1^{\ssfu}+1)}(u) \cdot (-1)^{\sum\limits_{j=1}^{r^{\ssfu}} A_j^{\ssfu}} 2^{-\sum\limits_{j=1}^{r^{\ssfu}} A_j^{\ssfu} + \sum\limits_{j=0}^{r^{\ssfu}} B_j^{\ssfu}} \prod_{i=1}^{r^{\ssfu}} \frac{1}{(A_{i}^{\ssfu}!)^2}\\
	&\quad\quad\times\sum_{\mathbf{B}^{\ssfu}}  \prod_{i=0}^{r^{\ssfu}} \big(B_i^{\ssfu}!\big)^2 \scalebox{0.8}{$\left(\begin{array}{c}
			N^{\ssfu}+\sum\limits_{j=1}^i A_j^{\ssfu} - \sum\limits_{j=0}^{i-1} B_j^{\ssfu}\\[12pt]
			B_i^{\ssfu}
		\end{array}\right)$}
	\scalebox{0.8}{$\left(\begin{array}{c}
			L^{\ssfu}-N^{\ssfu}-2i-\sum\limits_{j=1}^i A_j^{\ssfu} - \sum\limits_{j=0}^{i-1} B_j^{\ssfu}\\[12pt]
			B_i^{\ssfu}
		\end{array}\right)$}\\
	&\quad\quad\times \rho^{N^{\ssfu}+\sum\limits_{j=1}^{r^{\ssfu}} A_j^{\ssfu} - \sum\limits_{j=0}^{r^{\ssfu}} B_j^{\ssfu}},
\end{align*}
in which the nonnegative sequence $\mathbf{B}^{\ssfu}=(B_0^{\ssfu},B_1^{\ssfu},\ldots,B_{r^{\ssfu}}^{\ssfu})$ is such that
\begin{align}\label{eq:B-cond-D2k-u}
	r^{\ssfu} + \sum_{j=0}^{r^{\ssfu}} B_j^{\ssfu} = M^{\ssfu},
\end{align}
in light of \eqref{eq:B-cond}. Similarly,
\begin{align*}
	&\mR_{L^{\ssfv}+1-2M^{\ssfv}} \cdots \mR_{L^{\ssfv}-3} \mR_{L^{\ssfv}-1} (\rho^{N^{\ssfv}} v)\\
	&\quad = \sum_{\mathbf{A}^{\ssfv}}\mM_{2(A_{r^{\ssfv}}^{\ssfv}+1)}\cdots \mM_{2(A_1^{\ssfv}+1)}(v) \cdot (-1)^{\sum\limits_{j=1}^{r^{\ssfv}} A_j^{\ssfv}} 2^{-\sum\limits_{j=1}^{r^{\ssfv}} A_j^{\ssfv} + \sum\limits_{j=0}^{r^{\ssfv}} B_j^{\ssfv}} \prod_{i=1}^{r^{\ssfv}} \frac{1}{(A_{i}^{\ssfv}!)^2}\\
	&\quad\quad\times\sum_{\mathbf{B}^{\ssfv}}  \prod_{i=0}^{r^{\ssfv}} \big(B_i^{\ssfv}!\big)^2 \scalebox{0.8}{$\left(\begin{array}{c}
			N^{\ssfv}+\sum\limits_{j=1}^i A_j^{\ssfv} - \sum\limits_{j=0}^{i-1} B_j^{\ssfv}\\[12pt]
			B_i^{\ssfv}
		\end{array}\right)$}
	\scalebox{0.8}{$\left(\begin{array}{c}
			L^{\ssfv}-N^{\ssfv}-2i-\sum\limits_{j=1}^i A_j^{\ssfv} - \sum\limits_{j=0}^{i-1} B_j^{\ssfv}\\[12pt]
			B_i^{\ssfv}
		\end{array}\right)$}\\
	&\quad\quad\times \rho^{N^{\ssfv}+\sum\limits_{j=1}^{r^{\ssfv}} A_j^{\ssfv} - \sum\limits_{j=0}^{r^{\ssfv}} B_j^{\ssfv}},
\end{align*}
where the nonnegative sequence $\mathbf{B}^{\ssfv}=(B_0^{\ssfv},B_1^{\ssfv},\ldots,B_{r^{\ssfv}}^{\ssfv})$ is such that
\begin{align}\label{eq:B-cond-D2k-v}
	r^{\ssfv} + \sum_{j=0}^{r^{\ssfv}} B_j^{\ssfv} = M^{\ssfv}.
\end{align}

For the outer layer, we are essentially looking at
\begin{align*}
	&\mR_{L'+1-2M'} \cdots \mR_{L'-3} \mR_{L'-1}\left(\rho^{N^{\ssfu}+N^{\ssfv}+\sum\limits_{j=1}^{r^{\ssfu}} A_j^{\ssfu} +\sum\limits_{j=1}^{r^{\ssfv}} A_j^{\ssfv} - \sum\limits_{j=0}^{r^{\ssfu}} B_j^{\ssfu} - \sum\limits_{j=0}^{r^{\ssfv}} B_j^{\ssfv}}\right)\Bigg|_{\rho=0}\\
	&\quad= \sum_{\mathbf{A}'} \mM_{2(A'_{r'}+1)}\cdots \mM_{2(A'_1+1)} (1)\\
	&\quad\quad\times  (-1)^{M'-N^{\ssfu}-N^{\ssfv}-\sum\limits_{j=1}^{r^{\ssfu}} A_j^{\ssfu} -\sum\limits_{j=1}^{r^{\ssfv}} A_j^{\ssfv} + \sum\limits_{j=0}^{r^{\ssfu}} B_j^{\ssfu} + \sum\limits_{j=0}^{r^{\ssfv}} B_j^{\ssfv}-r'}\\
	&\quad\quad\times 2^{N^{\ssfu}+N^{\ssfv}+\sum\limits_{j=1}^{r^{\ssfu}} A_j^{\ssfu} +\sum\limits_{j=1}^{r^{\ssfv}} A_j^{\ssfv} - \sum\limits_{j=0}^{r^{\ssfu}} B_j^{\ssfu} - \sum\limits_{j=0}^{r^{\ssfv}} B_j^{\ssfv}} \prod_{i=1}^{r'} \frac{1}{(A'_{i}!)^2}\notag\\
	&\quad\quad\times \big(M'!\big)\prod_{n=0}^{M'-1}(L'-M'-n)\prod_{i=1}^{r'} \scalebox{0.75}{$\dfrac{1}{\sum\limits_{j=1}^i (A'_{r'+1-j}+1)}$}\prod_{i=1}^{r'} \scalebox{0.75}{$\dfrac{1}{L'-2M'+\sum\limits_{j=1}^i (A'_{r'+1-j}+1)}$},
\end{align*}
in view of \eqref{generalized-Juhl-formula}. Here the nonnegative sequences $\mathbf{A}^{\ssfu}=(A_1^{\ssfu},\ldots,A_{r^{\ssfu}}^{\ssfu})$, $\mathbf{A}^{\ssfv}=(A_1^{\ssfv},\ldots,A_{r^{\ssfv}}^{\ssfv})$ and $\mathbf{A}'=(A'_1,\ldots,A'_{r'})$ are such that
\begin{align*}
	r' + \sum_{j=1}^{r'} A'_j = M'-N^{\ssfu}-N^{\ssfv}-\sum\limits_{j=1}^{r^{\ssfu}} A_j^{\ssfu} -\sum\limits_{j=1}^{r^{\ssfv}} A_j^{\ssfv} + \sum\limits_{j=0}^{r^{\ssfu}} B_j^{\ssfu} + \sum\limits_{j=0}^{r^{\ssfv}} B_j^{\ssfv},
\end{align*}
according to \eqref{eq:generalized-Juhl-formula-cond}. This is, in light of \eqref{eq:B-cond-D2k-u} and \eqref{eq:B-cond-D2k-v}, further equivalent to
\begin{align}\label{eq:D2k-cond}
	r^{\ssfu} + \sum\limits_{j=1}^{r^{\ssfu}} A_j^{\ssfu} + r^{\ssfv} + \sum\limits_{j=1}^{r^{\ssfv}} A_j^{\ssfv} + r' + \sum_{j=1}^{r'} A'_j = M^{\ssfu} + M^{\ssfv} + M' - N^{\ssfu} - N^{\ssfv}.
\end{align}

For convenience, we write
\begin{align}
	\mM_{\mathbf{A}',\mathbf{A}^{\ssfu},\mathbf{A}^{\ssfv}}(u,v) & := \mM_{2(A'_{r'}+1)}\cdots \mM_{2(A'_1+1)}\\
	&\ \quad \big(\mM_{2(A_{r^{\ssfu}}^{\ssfu}+1)}\cdots \mM_{2(A_1^{\ssfu}+1)}(u)\mM_{2(A_{r^{\ssfv}}^{\ssfv}+1)}\cdots \mM_{2(A_1^{\ssfv}+1)}(v)\big)\notag.
\end{align}
It follows that
\begin{align}
	&D_{[M',L'],[M^{\ssfu},L^{\ssfu}],[M^{\ssfv},L^{\ssfv}]}(\mP_{N^{\ssfu}}(u)\otimes \mP_{N^{\ssfv}}(v))\\
	&=\sum_{\mathbf{A}'}\sum_{\mathbf{A}^{\ssfu}}\sum_{\mathbf{A}^{\ssfv}} \mM_{\mathbf{A}',\mathbf{A}^{\ssfu},\mathbf{A}^{\ssfv}}(u,v)\notag\\
	&\times  (-1)^{N^{\ssfu}+N^{\ssfv}} 2^{N^{\ssfu}+N^{\ssfv}} \prod_{i=1}^{r^{\ssfu}} \frac{1}{(A_{i}^{\ssfu}!)^2} \prod_{i=1}^{r^{\ssfv}} \frac{1}{(A_{i}^{\ssfv}!)^2} \prod_{i=1}^{r'} \frac{1}{(A'_{i}!)^2} \prod_{i=1}^{r'} \scalebox{0.75}{$\dfrac{1}{\sum\limits_{j=1}^i (A'_{r'+1-j}+1)}$}\notag\\
	&\times (-1)^{M'-r'} \big(M'!\big)\prod_{n=0}^{M'-1}(L'-M'-n)\prod_{i=1}^{r'} \scalebox{0.75}{$\dfrac{1}{L'-2M'+\sum\limits_{j=1}^i (A'_{r'+1-j}+1)}$}\notag\\
	&\times\sum_{\mathbf{B}^{\ssfu}}  \prod_{i=0}^{r^{\ssfu}} (-1)^{B_i^{\ssfu}} \big(B_i^{\ssfu}!\big)^2 \scalebox{0.8}{$\left(\begin{array}{c}
					N^{\ssfu}+\sum\limits_{j=1}^i A_j^{\ssfu} - \sum\limits_{j=0}^{i-1} B_j^{\ssfu}\\[12pt]
					B_i^{\ssfu}
				\end{array}\right)$}
	\scalebox{0.8}{$\left(\begin{array}{c}
					L^{\ssfu}-N^{\ssfu}-2i-\sum\limits_{j=1}^i A_j^{\ssfu} - \sum\limits_{j=0}^{i-1} B_j^{\ssfu}\\[12pt]
					B_i^{\ssfu}
				\end{array}\right)$}\notag\\
	&\times\sum_{\mathbf{B}^{\ssfv}}  \prod_{i=0}^{r^{\ssfv}} (-1)^{B_i^{\ssfv}} \big(B_i^{\ssfv}!\big)^2 \scalebox{0.8}{$\left(\begin{array}{c}
					N^{\ssfv}+\sum\limits_{j=1}^i A_j^{\ssfv} - \sum\limits_{j=0}^{i-1} B_j^{\ssfv}\\[12pt]
					B_i^{\ssfv}
				\end{array}\right)$}
	\scalebox{0.8}{$\left(\begin{array}{c}
					L^{\ssfv}-N^{\ssfv}-2i-\sum\limits_{j=1}^i A_j^{\ssfv} - \sum\limits_{j=0}^{i-1} B_j^{\ssfv}\\[12pt]
					B_i^{\ssfv}
				\end{array}\right)$},\notag
\end{align}
where $\mathbf{A}'$, $\mathbf{A}^{\ssfu}$, $\mathbf{A}^{\ssfv}$, $\mathbf{B}^{\ssfu}$ and $\mathbf{B}^{\ssfv}$ are controlled by \eqref{eq:B-cond-D2k-u}, \eqref{eq:B-cond-D2k-v} and \eqref{eq:D2k-cond}.

Now we are in a position to expand $D_{U,V,L,K^\ssfu,K^\ssfv}(\mP_{N^\ssfu}(u)\otimes \mP_{N^\ssfv}(v))$.

\begin{theorem}\label{th:D-2k-general}
	Let $N^\ssfu$ and $N^\ssfv$ be nonnegative integers. Then
	\begin{align}\label{eq:D-2k-general-N}
		&D_{U,V,L,K^\ssfu,K^\ssfv}(\mP_{N^\ssfu}(u)\otimes \mP_{N^\ssfv}(v)) \\
		&\quad = \sum_{\mathbf{A}'}\sum_{\mathbf{A}^{\ssfu}}\sum_{\mathbf{A}^{\ssfv}} \mM_{2(A'_{r'}+1)}\cdots \mM_{2(A'_1+1)}\notag\\
		&\ \quad\quad \big(\mM_{2(A_{r^{\ssfu}}^{\ssfu}+1)}\cdots \mM_{2(A_1^{\ssfu}+1)}(u)\mM_{2(A_{r^{\ssfv}}^{\ssfv}+1)}\cdots \mM_{2(A_1^{\ssfv}+1)}(v)\big)\notag\\
		&\quad\quad\times (-1)^{N^{\ssfu}+N^{\ssfv}} 2^{N^{\ssfu}+N^{\ssfv}} \frac{1}{(K^\ssfu+N^\ssfu)(K^\ssfv+N^\ssfv)(L-N^\ssfu)(L-N^\ssfv)}\notag\\
		&\quad\quad\times \prod_{n=0}^{U} (K^\ssfu+n) \prod_{n=0}^{U} (K^\ssfv+n) \prod_{n=1}^{U} (L-n) \prod_{n=0}^{V-1}(L+n)\notag\\
		&\quad\quad\times \prod_{i=1}^{r^{\ssfu}} \frac{1}{(A_{i}^{\ssfu}!)^2}\prod_{i=1}^{r^\ssfu} \scalebox{0.75}{$\dfrac{1}{K^\ssfu+N^\ssfu+\sum\limits_{j=1}^i (A_{j}^\ssfu+1)}$}\prod_{i=1}^{r^\ssfu} \scalebox{0.75}{$\dfrac{1}{L-N^\ssfu-\sum\limits_{j=1}^i (A_{j}^\ssfu+1)}$}\notag\\
		&\quad\quad\times \prod_{i=1}^{r^{\ssfv}} \frac{1}{(A_{i}^{\ssfv}!)^2}\prod_{i=1}^{r^\ssfv} \scalebox{0.75}{$\dfrac{1}{K^\ssfv+N^\ssfv+\sum\limits_{j=1}^i (A_{j}^\ssfv+1)}$}\prod_{i=1}^{r^\ssfv} \scalebox{0.75}{$\dfrac{1}{L-N^\ssfv-\sum\limits_{j=1}^i (A_{j}^\ssfv+1)}$}\notag\\
		&\quad\quad\times \prod_{i=1}^{r'} \frac{1}{(A'_{i}!)^2}\prod_{i=1}^{r'} \scalebox{0.75}{$\dfrac{1}{\sum\limits_{j=1}^i (A'_{r'+1-j}+1)}$} \prod_{i=1}^{r'} \scalebox{0.75}{$\dfrac{1}{L+V-\sum\limits_{j=1}^i (A'_{r'+1-j}+1)}$},\notag
	\end{align}
	where the summation runs over all nonnegative sequences $\mathbf{A}^{\ssfu}=(A_1^{\ssfu},\ldots,A_{r^{\ssfu}}^{\ssfu})$, $\mathbf{A}^{\ssfv}=(A_1^{\ssfv},\ldots,A_{r^{\ssfv}}^{\ssfv})$ and $\mathbf{A}'=(A'_1,\ldots,A'_{r'})$ such that
	\begin{align}\label{eq:AAA-cond}
		r^{\ssfu} + \sum\limits_{j=1}^{r^{\ssfu}} A_j^{\ssfu} + r^{\ssfv} + \sum\limits_{j=1}^{r^{\ssfv}} A_j^{\ssfv} + r' + \sum_{j=1}^{r'} A'_j = U - N^{\ssfu} - N^{\ssfv}.
	\end{align}
\end{theorem}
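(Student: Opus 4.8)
The plan is to unwind the definition~\eqref{eq:D-general} of $D_{U,V,L,K^{\ssfu},K^{\ssfv}}$ in terms of the building blocks computed in \cref{diffindo} and then recognize the resulting multiple sum as the right-hand side of~\eqref{eq:D-2k-general-N}. Substituting into~\eqref{eq:D-general} the expression for $D_{[M',L'],[M^{\ssfu},L^{\ssfu}],[M^{\ssfv},L^{\ssfv}]}(\mP_{N^{\ssfu}}(u)\otimes\mP_{N^{\ssfv}}(v))$ assembled just before the statement --- which itself comes from~\eqref{eq:S-prefactor}, \eqref{eq:S(A,B)} and~\eqref{generalized-Juhl-formula} --- together with the prescribed parameters $L'=-L-V+2M'$, $L^{\ssfu}=L-K^{\ssfu}$, $L^{\ssfv}=L-K^{\ssfv}$, one obtains $D_{U,V,L,K^{\ssfu},K^{\ssfv}}(\mP_{N^{\ssfu}}(u)\otimes\mP_{N^{\ssfv}}(v))$ as a sum over $M^{\ssfu}+M^{\ssfv}+M'=U$ and over the sequences $\mathbf A^{\ssfu},\mathbf A^{\ssfv},\mathbf A',\mathbf B^{\ssfu},\mathbf B^{\ssfv}$. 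The composition $\mM_{\mathbf A',\mathbf A^{\ssfu},\mathbf A^{\ssfv}}(u,v)$ factors out of every combinatorial weight, and the constraints~\eqref{eq:B-cond-D2k-u}, \eqref{eq:B-cond-D2k-v} and~\eqref{eq:D2k-cond}, together with $M^{\ssfu}+M^{\ssfv}+M'=U$, are precisely~\eqref{eq:AAA-cond}; hence the whole task is to evaluate, for each admissible triple $(\mathbf A^{\ssfu},\mathbf A^{\ssfv},\mathbf A')$, the scalar coefficient of $\mM_{\mathbf A',\mathbf A^{\ssfu},\mathbf A^{\ssfv}}(u,v)$.

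I would extract the elementary factors first. Reading off the sign, the power of $2$, the products $\prod_i(A_i^{\ssfu}!)^{-2}\prod_i(A_i^{\ssfv}!)^{-2}\prod_i(A'_i!)^{-2}$, and the factor $\prod_{i=1}^{r'}\bigl(\sum_{j=1}^i(A'_{r'+1-j}+1)\bigr)^{-1}$ (this last one coming straight from~\eqref{generalized-Juhl-formula}), what remains is a triple sum $\Sigma$ over $M^{\ssfu}+M^{\ssfv}+M'=U$ of the Gamma-function weight of~\eqref{eq:D-general} against the two inner $\mathbf B$-sums and the leftover $M'$-block $(M'!)\prod_{n=0}^{M'-1}(L'-M'-n)\prod_{i=1}^{r'}\bigl(L'-2M'+\sum_{j=1}^i(A'_{r'+1-j}+1)\bigr)^{-1}$. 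For a fixed triple $(M^{\ssfu},M^{\ssfv},M')$ the inner $\mathbf B$-sums are evaluated by a combinatorial identity of Appendix~\ref{sec:appendix}, a variant of the one used in the proof of \cref{coro:D-u} in which the summation constraint $\sum_j B_j^{\ssfu}=M^{\ssfu}-r^{\ssfu}$ is no longer tied to $N^{\ssfu}+\sum_j A_j^{\ssfu}$ and an alternating sign $(-1)^{B_i^{\ssfu}}$ is present; this turns each inner sum into a ratio of Gamma functions and linear factors in $M^{\ssfu}$ (resp.\ $M^{\ssfv}$) and the partial sums of $\mathbf A^{\ssfu}$ (resp.\ $\mathbf A^{\ssfv}$).

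The heart of the proof is then to evaluate the remaining sum over $M^{\ssfu}+M^{\ssfv}+M'=U$, and this is the step I expect to be the main obstacle. Because the constraint couples the three indices, $\Sigma$ does not split over the three layers, and because the factors produced by the $\mathbf B$-sums are not in standard hypergeometric position, it is not a single Chu--Vandermonde summation --- this is exactly where Fefferman and Graham's recursion argument breaks down for $D_{2k}$. I would instead argue by induction on $U$, keeping $V$, $L$, $K^{\ssfu}$, $K^{\ssfv}$, $N^{\ssfu}$, $N^{\ssfv}$ and the three sequences $\mathbf A^{\ssfu},\mathbf A^{\ssfv},\mathbf A'$ free; this is precisely the reason for passing to the generalized operator~\eqref{eq:D-general} with that many parameters. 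Concretely, the Gamma ratios in~\eqref{eq:D-general} obey Pascal-type recursions that let one rewrite the sum at level $U$ in terms of sums at level $U-1$ with shifted parameters, to which the induction hypothesis applies; reassembling the pieces by the identities of Appendix~\ref{sec:appendix} yields a closed form. The base case $U=N^{\ssfu}+N^{\ssfv}$ forces $r^{\ssfu}=r^{\ssfv}=r'=0$ and all three sequences empty, so it is a one-line check.

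Finally, one matches the outcome against~\eqref{eq:D-2k-general-N}. The closed form for $\Sigma$ produces exactly $\prod_{n=0}^{U}(K^{\ssfu}+n)\prod_{n=0}^{U}(K^{\ssfv}+n)\prod_{n=1}^{U}(L-n)\prod_{n=0}^{V-1}(L+n)$; after substituting $L'-2M'=-L-V$ into the $M'$-block one obtains the reciprocal products $\prod_{i=1}^{r'}\bigl(L+V-\sum_{j=1}^i(A'_{r'+1-j}+1)\bigr)^{-1}$; the ``missing'' $i=0$ terms from the three layers are absorbed into the prefactor $\bigl((K^{\ssfu}+N^{\ssfu})(K^{\ssfv}+N^{\ssfv})(L-N^{\ssfu})(L-N^{\ssfv})\bigr)^{-1}$; and the surviving reciprocal products over partial sums of $\mathbf A^{\ssfu}$ and $\mathbf A^{\ssfv}$ become the four products involving $K^{\ssfu}+N^{\ssfu}$, $L-N^{\ssfu}$, $K^{\ssfv}+N^{\ssfv}$, $L-N^{\ssfv}$. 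Collecting the signs and powers of $2$ along the way (they assemble into $(-1)^{N^{\ssfu}+N^{\ssfv}}2^{N^{\ssfu}+N^{\ssfv}}$) gives~\eqref{eq:D-2k-general-N}.
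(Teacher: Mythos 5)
Your setup is the same as the paper's: unwind \eqref{eq:D-general} using the expansion of $D_{[M',-L-V+2M'],[M^{\ssfu},L-K^{\ssfu}],[M^{\ssfv},L-K^{\ssfv}]}(\rho^{N^\ssfu}u\otimes\rho^{N^\ssfv}v)$ assembled from \eqref{eq:S-prefactor}, \eqref{eq:S(A,B)} and \eqref{generalized-Juhl-formula}, factor out $\mM_{\mathbf{A}',\mathbf{A}^{\ssfu},\mathbf{A}^{\ssfv}}(u,v)$, and observe that the constraints collapse to \eqref{eq:AAA-cond}. But the heart of your argument has a genuine gap, in two places. First, you assert that for fixed $(M^{\ssfu},M^{\ssfv},M')$ each inner $\mathbf{B}$-sum is evaluated by ``a variant'' of the appendix identities in which the constraint $\sum_j B_j^{\ssfu}=M^{\ssfu}-r^{\ssfu}$ is decoupled from the partial sums of $\mathbf{A}^{\ssfu}$ and an alternating sign is present. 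No such identity appears in \cref{sec:appendix}, and it is doubtful that a clean product formula exists at fixed $M^{\ssfu}$: already for $r^{\ssfu}=1$ the sum is a generic terminating ${}_4F_3$-type series, not a Pfaff--Saalsch\"utz instance. Second, you defer the coupled sum over $M^{\ssfu}+M^{\ssfv}+M'=U$ to an induction on $U$ driven by unspecified ``Pascal-type recursions''; this is exactly the hard step, and no mechanism is given for why the level-$U$ sum reduces to level $U-1$ with shifted parameters or why the pieces reassemble into the claimed closed form. As it stands the proposal proves the easy bookkeeping and leaves the essential evaluation unproved.

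The paper's proof avoids both issues by never summing the $\mathbf{B}$-layers at fixed $(M^{\ssfu},M^{\ssfv},M')$ and never inducting on $U$. After rewriting the Gamma prefactor of \eqref{eq:D-general} as products of binomials, it absorbs the outer summation index into the $\mathbf{B}$-sequence: the sum over $M^{\ssfv}+M'=U-M^{\ssfu}$ jointly with $\mathbf{B}^{\ssfv}$ is encoded by the extended sequence $\widehat{\mathbf{B}}^{\ssfv}=(B_0^{\ssfv},\ldots,B_{r^{\ssfv}}^{\ssfv},M')$, and then the sum over $M^{\ssfu}$ jointly with $\mathbf{B}^{\ssfu}$ by $\widehat{\mathbf{B}}^{\ssfu}=(B_0^{\ssfu},\ldots,B_{r^{\ssfu}}^{\ssfu},U-M^{\ssfu})$. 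Each extended sum is exactly of the shape of \cref{combinatorial1}, i.e.\ \eqref{eq:aux-sum-2}, whose distinguished last entry $C_{r+1}$ carries the weights $\binom{X+C_{r+1}-1}{C_{r+1}}\binom{-Y+M}{C_{r+1}}$ coming from the Gamma ratios, while the remaining entries carry the signs $(-1)^{C_i}$ from the $\mathbf{B}$-sums; two applications of \eqref{eq:aux-sum-2} (with $X\mapsto L-U+M^{\ssfu}$, $Y\mapsto -K^{\ssfv}+1$ and then $X\mapsto L-U$, $Y\mapsto -K^{\ssfu}+1$) yield the closed form in one shot. The only induction in the paper is inside the proof of \eqref{eq:aux-sum-2} itself, on the length of $\mathbf{A}$ via Pfaff--Saalsch\"utz; the extra free parameters $V,K^{\ssfu},K^{\ssfv},N^{\ssfu},N^{\ssfv}$ serve to make those appendix identities applicable, not to support an induction on $U$. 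To repair your argument you would either have to prove the fixed-$M$ variant identity and your $U$-recursion from scratch, or, more efficiently, adopt the extension trick $\mathbf{B}\mapsto\widehat{\mathbf{B}}$ and reduce everything to \eqref{eq:aux-sum-2}.
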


\begin{proof}
	We apply our previous analysis to expand each
	\begin{align*}
		D_{[M',-L-V+2M'],[M^{\ssfu},L-K^{\ssfu}],[M^{\ssfv},L-K^{\ssfv}]}(\rho^{N^\ssfu}u \otimes \rho^{N^\ssfv}v).
	\end{align*}
	First, we note that the restriction \eqref{eq:AAA-cond} comes from \eqref{eq:D2k-cond} where we have also utilized the fact that $M^{\ssfu} + M^{\ssfv} + M' = U$. Next, it is easy to observe that
	\begin{align*}
		&\frac{\Gamma(U+K^\ssfu+1)\Gamma(U+K^\ssfv+1)}{\Gamma(M^\ssfu+K^\ssfu+1)\Gamma(M^\ssfv+K^\ssfv+1)\Gamma(M'+1)}\\
		&\times \frac{\Gamma(L-M^\ssfu)\Gamma(L-M^\ssfv)\Gamma(L+V-M')}{\Gamma(L-U)\Gamma(L)^2} \\
		&\qquad\qquad = L\cdot \frac{\Gamma(L+V-M')}{\Gamma(L)}\cdot \prod_{n=0}^{U} (K^\ssfv+n) \cdot \prod_{n=0}^{U-M^\ssfu} \frac{1}{K^\ssfv+n} \prod_{n=0}^{U-M^\ssfu} \frac{1}{L-n}\\
		&\qquad\qquad\quad\times \big((U-M^\ssfu)!\big)^2 \binom{L-M^\ssfu-1}{U-M^\ssfu}\binom{U+K^\ssfu}{U-M^\ssfu}\\
		&\qquad\qquad\quad\times \big(M'!\big)\binom{L-U+M^\ssfu+M'-1}{M'}\binom{U-M^\ssfu+K^\ssfv}{M'}.
	\end{align*}
	Also,
	\begin{align*}
		\frac{\Gamma(L+V-M')}{\Gamma(L)}\prod_{n=0}^{M'-1}(-L-V+M'-n) = (-1)^{M'} \prod_{n=0}^{V-1}(L+n).
	\end{align*}
	Hence, $D_{U,V,L,K^\ssfu,K^\ssfv}(\rho^{N^\ssfu}u \otimes \rho^{N^\ssfv}v)$ equals
	\begin{align*}
		&\sum_{\mathbf{A}'}\sum_{\mathbf{A}^{\ssfu}}\sum_{\mathbf{A}^{\ssfv}} \mM_{\mathbf{A}',\mathbf{A}^{\ssfu},\mathbf{A}^{\ssfv}}(u,v)\notag\\
		&\times (-1)^{N^{\ssfu}+N^{\ssfv}} 2^{N^{\ssfu}+N^{\ssfv}} L \cdot \prod_{n=0}^{U} (K^\ssfv+n)\prod_{n=0}^{V-1}(L+n) \notag\\
		&\times \prod_{i=1}^{r^{\ssfu}} \frac{1}{(A_{i}^{\ssfu}!)^2} \prod_{i=1}^{r^{\ssfv}} \frac{1}{(A_{i}^{\ssfv}!)^2} \prod_{i=1}^{r'} \frac{1}{(A'_{i}!)^2}\prod_{i=1}^{r'} \scalebox{0.75}{$\dfrac{1}{\sum\limits_{j=1}^i (A'_{r'+1-j}+1)}$} \prod_{i=1}^{r'} \scalebox{0.75}{$\dfrac{1}{L+V-\sum\limits_{j=1}^i (A'_{r'+1-j}+1)}$}\notag\\
		&\times\sum_{M^{\ssfu}=0}^{U} \big((U-M^\ssfu)!\big)^2 \binom{L-M^\ssfu-1}{U-M^\ssfu}\binom{U+K^\ssfu}{U-M^\ssfu} \prod_{n=0}^{U-M^\ssfu} \frac{1}{K^\ssfv+n} \prod_{n=0}^{U-M^\ssfu} \frac{1}{L-n} \\
		&\times\sum_{\mathbf{B}^{\ssfu}}  \prod_{i=0}^{r^{\ssfu}} (-1)^{B_i^{\ssfu}} \big(B_i^{\ssfu}!\big)^2 \scalebox{0.7}{$\left(\begin{array}{c}
				N^{\ssfu}+\sum\limits_{j=1}^i A_j^{\ssfu} - \sum\limits_{j=0}^{i-1} B_j^{\ssfu}\\[12pt]
				B_i^{\ssfu}
			\end{array}\right)$}
		\scalebox{0.7}{$\left(\begin{array}{c}
				L-K^{\ssfu}-N^{\ssfu}-2i-\sum\limits_{j=1}^i A_j^{\ssfu} - \sum\limits_{j=0}^{i-1} B_j^{\ssfu}\\[12pt]
				B_i^{\ssfu}
			\end{array}\right)$}\notag\\
		&\times\sum_{\substack{M^{\ssfv},M'\ge 0\\M^{\ssfv}+M'=U-M^{\ssfu}}}\big(M'!\big)^2\binom{L-U+M^\ssfu+M'-1}{M'}\binom{U-M^\ssfu+K^\ssfv}{M'}\\
		&\times \sum_{\mathbf{B}^{\ssfv}}  \prod_{i=0}^{r^{\ssfv}} (-1)^{B_i^{\ssfv}} \big(B_i^{\ssfv}!\big)^2\scalebox{0.7}{$\left(\begin{array}{c}
				N^{\ssfv}+\sum\limits_{j=1}^i A_j^{\ssfv} - \sum\limits_{j=0}^{i-1} B_j^{\ssfv}\\[12pt]
				B_i^{\ssfv}
			\end{array}\right)$}
		\scalebox{0.7}{$\left(\begin{array}{c}
				L-K^{\ssfv}-N^{\ssfv}-2i-\sum\limits_{j=1}^i A_j^{\ssfv} - \sum\limits_{j=0}^{i-1} B_j^{\ssfv}\\[12pt]
				B_i^{\ssfv}
			\end{array}\right)$}.
	\end{align*}
	Now we extend the sequence $\mathbf{B}^\ssfv$ to the following sequence of length $r^{\ssfv}+2$:
	\begin{align*}
		\widehat{\mathbf{B}}^{\ssfv} = (\widehat{B}_0^{\ssfv},\widehat{B}_1^{\ssfv},\ldots,\widehat{B}_{r^\ssfv}^{\ssfv},\widehat{B}_{r^\ssfv+1}^{\ssfv}) \mapsto (B_0^{\ssfv},B_1^{\ssfv},\ldots,B_{r^\ssfv}^{\ssfv},M').
	\end{align*}
	It is clear that
	\begin{align*}
		\sum_{j=0}^{r^\ssfv+1} \widehat{B}_j^{\ssfv} &= \sum_{j=0}^{r^\ssfv} B_j^{\ssfv} + M' = M^\ssfv + M' - r^\ssfv\\
		&= (U-M^\ssfu+1) - (r^\ssfv + 1),
	\end{align*}
	where we have applied \eqref{eq:B-cond-D2k-v} for the second equality. In \eqref{eq:aux-sum-2}, we replace $\mathbf{C}$ with $\widehat{\mathbf{B}}^{\ssfv}$ and make the following substitutions:
	\begin{align*}
		\mathbf{A} &\mapsto (N^\ssfv,A_1^\ssfv,\ldots,A_{r^\ssfv}^\ssfv),\\
		r &\mapsto r^\ssfv+1,\\
		M &\mapsto U-M^\ssfu+1,\\
		X &\mapsto L-U+M^\ssfu,\\
		Y &\mapsto -K^\ssfv+1.
	\end{align*}
	It follows that
	\begin{align*}
		&\sum_{\substack{M^{\ssfv},M'\ge 0\\M^{\ssfv}+M'=U-M^{\ssfu}}} \big(M'!\big)^2 \binom{L-U+M^\ssfu+M'-1}{M'}\binom{U-M^\ssfu+K^\ssfv}{M'} \\
		&\times\sum_{\mathbf{B}^{\ssfv}}  \prod_{i=0}^{r^{\ssfv}} (-1)^{B_i^{\ssfv}} \big(B_i^{\ssfv}!\big)^2\scalebox{0.7}{$\left(\begin{array}{c}
				N^{\ssfv}+\sum\limits_{j=1}^i A_j^{\ssfv} - \sum\limits_{j=0}^{i-1} B_j^{\ssfv}\\[12pt]
				B_i^{\ssfv}
			\end{array}\right)$}
		\scalebox{0.7}{$\left(\begin{array}{c}
				L-K^{\ssfv}-N^{\ssfv}-2i-\sum\limits_{j=1}^i A_j^{\ssfv} - \sum\limits_{j=0}^{i-1} B_j^{\ssfv}\\[12pt]
				B_i^{\ssfv}
			\end{array}\right)$}
	\end{align*}
	equals
	\begin{align*}
		&\frac{1}{(K^\ssfv+N^\ssfv)(L-N^\ssfv)} \prod_{n=0}^{U-M^\ssfu} (K^\ssfv+n)\prod_{n=0}^{U-M^\ssfu} (L-n)\\
		&\times \prod_{i=1}^{r^\ssfv} \scalebox{0.75}{$\dfrac{1}{K^\ssfv+N^\ssfv+\sum\limits_{j=1}^i (A_{j}^\ssfv+1)}$}\prod_{i=1}^{r^\ssfv} \scalebox{0.75}{$\dfrac{1}{L-N^\ssfv-\sum\limits_{j=1}^i (A_{j}^\ssfv+1)}$}.
	\end{align*}
	Therefore, $D_{U,V,L,K^\ssfu,K^\ssfv}(\rho^{N^\ssfu}u \otimes \rho^{N^\ssfv}v)$ further equals
	\begin{align*}
		&\sum_{\mathbf{A}'} \sum_{\mathbf{A}^{\ssfu}}\sum_{\mathbf{A}^{\ssfv}} \mM_{\mathbf{A}',\mathbf{A}^{\ssfu},\mathbf{A}^{\ssfv}}(u,v)\notag\\
		&\times (-1)^{N^{\ssfu}+N^{\ssfv}} 2^{N^{\ssfu}+N^{\ssfv}} \frac{L}{(K^\ssfv+N^\ssfv)(L-N^\ssfv)}\prod_{n=0}^{U} (K^\ssfv+n) \prod_{n=0}^{V-1}(L+n) \\
		&\times \prod_{i=1}^{r^{\ssfu}} \frac{1}{(A_{i}^{\ssfu}!)^2} \prod_{i=1}^{r^{\ssfv}} \frac{1}{(A_{i}^{\ssfv}!)^2} \prod_{i=1}^{r'} \frac{1}{(A'_{i}!)^2} \notag\\
		&\times \prod_{i=1}^{r'} \scalebox{0.75}{$\dfrac{1}{\sum\limits_{j=1}^i (A'_{r'+1-j}+1)}$} \prod_{i=1}^{r'} \scalebox{0.75}{$\dfrac{1}{L+V-\sum\limits_{j=1}^i (A'_{r'+1-j}+1)}$}\notag\\
		&\times \prod_{i=1}^{r^\ssfv} \scalebox{0.75}{$\dfrac{1}{K^\ssfv+N^\ssfv+\sum\limits_{j=1}^i (A_{j}^\ssfv+1)}$}\prod_{i=1}^{r^\ssfv} \scalebox{0.75}{$\dfrac{1}{L-N^\ssfv-\sum\limits_{j=1}^i (A_{j}^\ssfv+1)}$}\\
		&\times\sum_{M^{\ssfu}=0}^{U} \big((U-M^\ssfu)!\big)^2 \binom{L-M^\ssfu-1}{U-M^\ssfu}\binom{U+K^\ssfu}{U-M^\ssfu} \\
		&\times\sum_{\mathbf{B}^{\ssfu}}  \prod_{i=0}^{r^{\ssfu}} (-1)^{B_i^{\ssfu}} \big(B_i^{\ssfu}!\big)^2 \scalebox{0.7}{$\left(\begin{array}{c}
				N^{\ssfu}+\sum\limits_{j=1}^i A_j^{\ssfu} - \sum\limits_{j=0}^{i-1} B_j^{\ssfu}\\[12pt]
				B_i^{\ssfu}
			\end{array}\right)$}
		\scalebox{0.7}{$\left(\begin{array}{c}
				L-K^{\ssfu}-N^{\ssfu}-2i-\sum\limits_{j=1}^i A_j^{\ssfu} - \sum\limits_{j=0}^{i-1} B_j^{\ssfu}\\[12pt]
				B_i^{\ssfu}
			\end{array}\right)$}.
	\end{align*}
	This time we extend the sequence $\mathbf{B}^\ssfu$ to the following sequence of length $r^{\ssfu}+2$:
	\begin{align*}
		\widehat{\mathbf{B}}^{\ssfu} = (\widehat{B}_0^{\ssfu},\widehat{B}_1^{\ssfu},\ldots,\widehat{B}_{r^\ssfu}^{\ssfu},\widehat{B}_{r^\ssfu+1}^{\ssfu}) \mapsto (B_0^{\ssfu},B_1^{\ssfu},\ldots,B_{r^\ssfu}^{\ssfu},U-M^\ssfu).
	\end{align*}
	Then by \eqref{eq:B-cond-D2k-u},
	\begin{align*}
		\sum_{j=0}^{r^\ssfu+1} \widehat{B}_j^{\ssfu} &= \sum_{j=0}^{r^\ssfu} B_j^{\ssfu} + (U-M^\ssfu) = M^\ssfu + (U-M^\ssfu) - r^\ssfu\\
		&= (U+1) - (r^\ssfu + 1).
	\end{align*}
	Let us replace $\mathbf{C}$ with $\widehat{\mathbf{B}}^{\ssfu}$ in \eqref{eq:aux-sum-2} and make the following substitutions:
	\begin{align*}
		\mathbf{A} &\mapsto (N^\ssfu,A_1^\ssfu,\ldots,A_{r^\ssfu}^\ssfu),\\
		r &\mapsto r^\ssfu+1,\\
		M &\mapsto U+1,\\
		X &\mapsto L-U,\\
		Y &\mapsto -K^\ssfu+1.
	\end{align*}
	Thus,
	\begin{align*}
		&\sum_{M^{\ssfu}=0}^{U} \big((U-M^\ssfu)!\big)^2 \binom{L-M^\ssfu-1}{U-M^\ssfu}\binom{U+K^\ssfu}{U-M^\ssfu} \\
		&\times \sum_{\mathbf{B}^{\ssfu}} \prod_{i=0}^{r^{\ssfu}} (-1)^{B_i^{\ssfu}} \big(B_i^{\ssfu}!\big)^2 \scalebox{0.7}{$\left(\begin{array}{c}
				N^{\ssfu}+\sum\limits_{j=1}^i A_j^{\ssfu} - \sum\limits_{j=0}^{i-1} B_j^{\ssfu}\\[12pt]
				B_i^{\ssfu}
			\end{array}\right)$}
		\scalebox{0.7}{$\left(\begin{array}{c}
				L-K^{\ssfu}-N^{\ssfu}-2i-\sum\limits_{j=1}^i A_j^{\ssfu} - \sum\limits_{j=0}^{i-1} B_j^{\ssfu}\\[12pt]
				B_i^{\ssfu}
			\end{array}\right)$}
	\end{align*}
	equals
	\begin{align*}
		&\frac{1}{(K^\ssfu+N^\ssfu)(L-N^\ssfu)} \prod_{n=0}^{U} (K^\ssfu+n)\prod_{n=0}^{U} (L-n)\\
		&\times \prod_{i=1}^{r^\ssfu} \scalebox{0.75}{$\dfrac{1}{K^\ssfu+N^\ssfu+\sum\limits_{j=1}^i (A_{j}^\ssfu+1)}$}\prod_{i=1}^{r^\ssfu} \scalebox{0.75}{$\dfrac{1}{L-N^\ssfu-\sum\limits_{j=1}^i (A_{j}^\ssfu+1)}$}.
	\end{align*}
	Substituting this relation into the expression of $D_{U,V,L,K^\ssfu,K^\ssfv}(\rho^{N^\ssfu}u \otimes \rho^{N^\ssfv}v)$ derived earlier yields the desired result.
\end{proof}

We are interested in for which choice of parameters the operator
\begin{align*}
	D_{U,V,L,K^\ssfu,K^\ssfv}(\rho^{N^\ssfu}u \otimes \rho^{N^\ssfv}v)
\end{align*}
is formally self-adjoint. Let $\mC_{(\mathbf{A}', \mathbf{A}^\ssfu,\mathbf{A}^\ssfv)}$ be the coefficient of the monomial
\begin{align*}
	&\mM_{2(A'_{r'}+1)}\cdots \mM_{2(A'_1+1)}\notag\\
	&\qquad\big(\mM_{2(A_{r^{\ssfu}}^{\ssfu}+1)}\cdots \mM_{2(A_1^{\ssfu}+1)}(u)\mM_{2(A_{r^{\ssfv}}^{\ssfv}+1)}\cdots \mM_{2(A_1^{\ssfv}+1)}(v)\big).
\end{align*}
Then these coefficients should satisfy
\begin{align*}
    \mC_{(\mathbf{A}', \mathbf{A}^\ssfu,\mathbf{A}^\ssfv)} = \mC_{(\mathbf{A}', \mathbf{A}^\ssfv, \mathbf{A}^\ssfu)}&=\mC_{((\mathbf{A}^\ssfu)^{-1}, (\mathbf{A}')^{-1}, \mathbf{A}^\ssfv)}=\mC_{((\mathbf{A}^\ssfv)^{-1}, (\mathbf{A}')^{-1}, \mathbf{A}^\ssfu)}\\
    &= \mC_{((\mathbf{A}^\ssfu)^{-1}, \mathbf{A}^\ssfv, (\mathbf{A}')^{-1})}=\mC_{((\mathbf{A}^\ssfv)^{-1}, \mathbf{A}^\ssfu, (\mathbf{A}')^{-1})}.
\end{align*}
Recalling also that $N^\ssfu$, $N^\ssfv$ and $V$ are nonnegative integers, the above imply that $K^\ssfu=K^\ssfv=0$, $N^\ssfu=N^\ssfv=0$ and $V=0$. 

\begin{corollary}\label{formally-self-adjointness-2}
We have
	\begin{align}\label{eq:D-2k-general-uv}
		&D_{U,0,L,0,0}(u\otimes v) \\
		&\quad = \sum_{\mathbf{A}'}\sum_{\mathbf{A}^{\ssfu}}\sum_{\mathbf{A}^{\ssfv}} \mM_{2(A'_{r'}+1)}\cdots \mM_{2(A'_1+1)}\notag\\
		&\ \quad\quad \big(\mM_{2(A_{r^{\ssfu}}^{\ssfu}+1)}\cdots \mM_{2(A_1^{\ssfu}+1)}(u)\mM_{2(A_{r^{\ssfv}}^{\ssfv}+1)}\cdots \mM_{2(A_1^{\ssfv}+1)}(v)\big)\notag\\
		&\quad\quad\times \frac{(U!)^2}{L^2}\prod_{n=1}^U (L-n)\prod_{i=1}^{r^{\ssfu}} \frac{1}{(A_{i}^{\ssfu}!)^2} \prod_{i=1}^{r^{\ssfv}} \frac{1}{(A_{i}^{\ssfv}!)^2} \prod_{i=1}^{r'} \frac{1}{(A'_{i}!)^2}\notag\\
		&\quad\quad\times \prod_{i=1}^{r^\ssfu} \scalebox{0.75}{$\dfrac{1}{\sum\limits_{j=1}^i (A_{j}^\ssfu+1)}$}\prod_{i=1}^{r^\ssfu} \scalebox{0.75}{$\dfrac{1}{L-\sum\limits_{j=1}^i (A_{j}^\ssfu+1)}$} \prod_{i=1}^{r^\ssfv} \scalebox{0.75}{$\dfrac{1}{\sum\limits_{j=1}^i (A_{j}^\ssfv+1)}$}\prod_{i=1}^{r^\ssfv} \scalebox{0.75}{$\dfrac{1}{L-\sum\limits_{j=1}^i (A_{j}^\ssfv+1)}$}\notag\\
		&\quad\quad\times \prod_{i=1}^{r'} \scalebox{0.75}{$\dfrac{1}{\sum\limits_{j=1}^i (A'_{r'+1-j}+1)}$} \prod_{i=1}^{r'} \scalebox{0.75}{$\dfrac{1}{L-\sum\limits_{j=1}^i (A'_{r'+1-j}+1)}$},\notag
	\end{align}
	where the summation runs over all nonnegative sequences $\mathbf{A}^{\ssfu}=(A_1^{\ssfu},\ldots,A_{r^{\ssfu}}^{\ssfu})$, $\mathbf{A}^{\ssfv}=(A_1^{\ssfv},\ldots,A_{r^{\ssfv}}^{\ssfv})$ and $\mathbf{A}'=(A'_1,\ldots,A'_{r'})$ such that
	\begin{align*}
		r^{\ssfu} + \sum\limits_{j=1}^{r^{\ssfu}} A_j^{\ssfu} + r^{\ssfv} + \sum\limits_{j=1}^{r^{\ssfv}} A_j^{\ssfv} + r' + \sum_{j=1}^{r'} A'_j = U.
	\end{align*}
	In particular, $D_{U,0,L,0,0}$ is formally self-adjoint.
\end{corollary}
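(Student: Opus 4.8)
The plan is to obtain \eqref{eq:D-2k-general-uv} as the specialization of Theorem~\ref{th:D-2k-general} at $N^\ssfu = N^\ssfv = 0$, $V = 0$ and $K^\ssfu = K^\ssfv = 0$, and then to read formal self-adjointness off the resulting symmetric coefficients. First I would set $N^\ssfu = N^\ssfv = 0$ in \eqref{eq:D-2k-general-N}; since $\mP_0$ is the identity operator, the left side becomes $D_{U,V,L,K^\ssfu,K^\ssfv}(u\otimes v)$ and the index constraint \eqref{eq:AAA-cond} becomes $r^\ssfu + \sum A_j^\ssfu + r^\ssfv + \sum A_j^\ssfv + r' + \sum A'_j = U$. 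Setting $V = 0$ turns $\prod_{n=0}^{V-1}(L+n)$ into the empty product $1$ and replaces $L+V$ by $L$ in the primed telescoping factor, matching \eqref{eq:D-2k-general-uv}. It then remains to set $K^\ssfu = K^\ssfv = 0$.

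The one computation needing care is this last substitution: with $N^\ssfu = 0$ the factor $\frac{1}{K^\ssfu + N^\ssfu}$ becomes $\frac{1}{K^\ssfu}$, which is singular at $K^\ssfu = 0$, and likewise for $K^\ssfv$. However both sides of \eqref{eq:D-2k-general-N} are rational in $K^\ssfu, K^\ssfv$ (the left side is in fact polynomial in these variables for fixed integers $U,V\geq 0$), and the apparent poles cancel identically: $\frac{1}{K}\prod_{n=0}^U(K+n) = \prod_{n=1}^U(K+n)$, whose value at $K = 0$ is $U!$. Combining the two such factors with $\frac{1}{(L-N^\ssfu)(L-N^\ssfv)}\big|_{N^\ssfu=N^\ssfv=0} = \frac{1}{L^2}$ and with $\prod_{n=1}^U(L-n)$ produces the prefactor $\frac{(U!)^2}{L^2}\prod_{n=1}^U(L-n)$. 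The remaining telescoping products $\prod_i\frac{1}{K^\bullet + N^\bullet + \sum_{j=1}^i(A_j^\bullet+1)}$ are already regular at $K^\bullet = N^\bullet = 0$ because each partial sum is $\geq 1$, and they become the products displayed in \eqref{eq:D-2k-general-uv}. So after performing this cancellation one simply evaluates, and \eqref{eq:D-2k-general-uv} follows.

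For formal self-adjointness I would use that each $\mM_{2(N+1)}$ is formally self-adjoint, so in the Dirichlet form $\int_M u_0\, D_{U,0,L,0,0}(u_1\otimes u_2)\,\dvol$ one may integrate by parts to move the outer block $\mM_{2(A'_{r'}+1)}\cdots\mM_{2(A'_1+1)}$ onto $u_0$, turning it into $\mM_{2(A'_1+1)}\cdots\mM_{2(A'_{r'}+1)}$; this rewrites the form as $\sum_{\mathbf{A}',\mathbf{A}^\ssfu,\mathbf{A}^\ssfv}\mC_{(\mathbf{A}',\mathbf{A}^\ssfu,\mathbf{A}^\ssfv)}\int_M \mM_{(\mathbf{A}')^{-1}}(u_0)\,\mM_{\mathbf{A}^\ssfu}(u_1)\,\mM_{\mathbf{A}^\ssfv}(u_2)\,\dvol$, where $\mM_{\mathbf{A}}$ abbreviates $\mM_{2(A_r+1)}\cdots\mM_{2(A_1+1)}$ and $\mathbf{A}^{-1}$ the reversed sequence. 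Hence it suffices to check that $\mC_{(\mathbf{A}',\mathbf{A}^\ssfu,\mathbf{A}^\ssfv)}$ in \eqref{eq:D-2k-general-uv} is invariant under the six relabelings recorded just before the corollary; reindexing the sum then shows the Dirichlet form is symmetric in $(u_0,u_1,u_2)$. I would verify this invariance by inspection: the prefactor $\frac{(U!)^2}{L^2}\prod_{n=1}^U(L-n)$ does not involve the sequences; the factors $\prod_i\frac{1}{(A_i^\bullet!)^2}$ are insensitive to reversal and to the $\ssfu\leftrightarrow\ssfv$ swap; and the crucial point is that in \eqref{eq:D-2k-general-uv} the $\ssfu$- and $\ssfv$-blocks carry the partial sums $\sum_{j=1}^i(A_j^\bullet+1)$ of the sequence itself, whereas the primed block carries the partial sums $\sum_{j=1}^i(A'_{r'+1-j}+1)$ of the \emph{reversed} sequence. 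Thus swapping a $\bullet$-block with the primed block while simultaneously reversing both sequences leaves every partial sum, hence every factor of the form $\frac{1}{P(L-P)}$, unchanged; this gives $\mC_{(\mathbf{A}',\mathbf{A}^\ssfu,\mathbf{A}^\ssfv)} = \mC_{(\mathbf{A}',\mathbf{A}^\ssfv,\mathbf{A}^\ssfu)}$ and $\mC_{(\mathbf{A}',\mathbf{A}^\ssfu,\mathbf{A}^\ssfv)} = \mC_{((\mathbf{A}^\ssfu)^{-1},(\mathbf{A}')^{-1},\mathbf{A}^\ssfv)}$, and the remaining four relabelings are compositions of these two. (The asymmetric ingredients of the general formula — the shifts $N^\bullet$, the factors $\frac{1}{K^\bullet+N^\bullet}$, and $\prod_{n=0}^{V-1}(L+n)$ — have all disappeared at our special values, which is why self-adjointness holds here and, by the analysis preceding the corollary, only here.)

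The step I expect to be the main obstacle is the bookkeeping in the second paragraph: tracking the $\tfrac{0}{0}$ cancellation at $K^\ssfu = K^\ssfv = 0$ and the various $\Gamma$-to-finite-product conversions so that the constant $\frac{(U!)^2}{L^2}\prod_{n=1}^U(L-n)$ emerges exactly as stated. The symmetry verification in the third paragraph is the conceptual content of the ``in particular'' clause but is routine once the reversed-partial-sum structure of the primed block is isolated.
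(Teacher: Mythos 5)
Your proposal is correct and follows essentially the same route as the paper: Corollary \ref{formally-self-adjointness-2} is obtained by evaluating Theorem \ref{th:D-2k-general} at $N^{\ssfu}=N^{\ssfv}=V=K^{\ssfu}=K^{\ssfv}=0$ (the cancellation $\frac{1}{K}\prod_{n=0}^{U}(K+n)=\prod_{n=1}^{U}(K+n)\to U!$ being legitimate since both sides are polynomial/regular in $K^{\ssfu},K^{\ssfv}$), and self-adjointness is read off from the invariance of the coefficients $\mC_{(\mathbf{A}',\mathbf{A}^{\ssfu},\mathbf{A}^{\ssfv})}$ under the six relabelings listed before the corollary. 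Your write-up merely makes explicit the specialization bookkeeping and the reversed-partial-sum symmetry check that the paper leaves implicit.
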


Finally, we recall that
\begin{align*}
	D_{2k}(u\otimes v) = \frac{1}{k!}D_{k,0,L_k,0,0}(u\otimes v),
\end{align*}
and hence complete the proof of Theorem \ref{JUhl-formula-ovsienko-redou}.
\section{Formal self-adjointness of $D_{2k;\mathcal{I}}$}\label{sec:selfadj-1}

According to \eqref{eq:D-2k-I}, the operator $D_{2k;\mathcal{I}}$ is a specialization of 
\begin{align*}
	D_{U,V,L,K;f}(u) &
	:= \sum_{\substack{M,M'\ge 0\\M+M'=U}} \binom{U+K}{M'}\frac{\Gamma(L+M')\Gamma(L+V-M')}{\Gamma(L)^2}\\
	&\ \quad\times D_{[M',-L-V+2M'],[M,L-K+U];f}(u)
\end{align*}
by taking $U=V=k$, $L=\ell$, $K=0$ and replacing $f$ with $\cI$.

In view of this, the main objective of this section is to derive an explicit expansion of the operator $D_{U,V,L,K;f}$. To achieve this goal, we first need to look into the operators
\begin{align*}
	D_{[M',L'],[M,L];f}(u) :=  \cD_{M',L'}\big(f\,\cD_{M,L}(u)\big)\Big\vert_{\rho=0}.
\end{align*}
Recall that $u$ is a smooth function independent of $\rho$ and that $N$ is a nonnegative integer.

For the inner layer, we may use \eqref{eq:S-prefactor} and \eqref{eq:S(A,B)} to get
\begin{align*}
	&\mR_{L+1-2M} \cdots \mR_{L-3} \mR_{L-1} (\rho^N u)\\
	&\quad = \sum_{\mathbf{A}}\mM_{2(A_r+1)}\cdots \mM_{2(A_1+1)}(u) \cdot (-1)^{\sum\limits_{j=1}^r A_j} 2^{-\sum\limits_{j=1}^r A_j + \sum\limits_{j=0}^r B_j} \prod_{i=1}^r \frac{1}{(A_{i}!)^2}\\
	&\quad\quad\times\sum_{\mathbf{B}}  \prod_{i=0}^{r} \big(B_i!\big)^2 \scalebox{0.8}{$\left(\begin{array}{c}
			N+\sum\limits_{j=1}^i A_j - \sum\limits_{j=0}^{i-1} B_j\\[12pt]
			B_i
		\end{array}\right)$}
	\scalebox{0.8}{$\left(\begin{array}{c}
			L-N-2i-\sum\limits_{j=1}^i A_j - \sum\limits_{j=0}^{i-1} B_j\\[12pt]
			B_i
		\end{array}\right)$}\\
	&\quad\quad\times \rho^{N+\sum\limits_{j=1}^r A_j - \sum\limits_{j=0}^r B_j}.
\end{align*}
Here the nonnegative sequence $\mathbf{B}=(B_0,B_1,\ldots,B_r)$ is such that
\begin{align}\label{eq:B-cond-3}
	r + \sum_{j=0}^r B_j = M,
\end{align}
which comes from \eqref{eq:B-cond}.

For the outer layer, we are essentially looking at
\begin{align*}
	&\mR_{L'+1-2M'} \cdots \mR_{L'-3} \mR_{L'-1}\left(\rho^{N+\sum\limits_{j=1}^r A_j - \sum\limits_{j=0}^r B_j} f\right)\Bigg|_{\rho=0}\\
	&\quad= \sum_{R}\sum_{\mathbf{A}'} \mM_{2(A'_{r'}+1)}\cdots \mM_{2(A'_1+1)} (f^{(R)})\\
	&\quad\quad\times  (-1)^{M'-N-\sum\limits_{j=1}^r A_j + \sum\limits_{j=0}^r B_j-R-r'}\, 2^{N+\sum\limits_{j=1}^r A_j - \sum\limits_{j=0}^r B_j+R} \frac{1}{R!}\prod_{i=1}^{r'} \frac{1}{(A'_{i}!)^2}\notag\\
	&\quad\quad\times \big(M'!\big)\prod_{n=0}^{M'-1}(L'-M'-n)\prod_{i=1}^{r'} \scalebox{0.75}{$\dfrac{1}{\sum\limits_{j=1}^i (A'_{r'+1-j}+1)}$}\prod_{i=1}^{r'} \scalebox{0.75}{$\dfrac{1}{L'-2M'+\sum\limits_{j=1}^i (A'_{r'+1-j}+1)}$},
\end{align*}
in view of \eqref{eq:D-f-u}. Here the nonnegative integer $R$ and nonnegative sequences $\mathbf{A}=(A_1,\ldots,A_r)$ and $\mathbf{A}'=(A'_1,\ldots,A'_{r'})$ are such that
\begin{align*}
	R + r' + \sum_{j=1}^{r'} A'_j = M'-N-\sum\limits_{j=1}^r A_j + \sum\limits_{j=0}^r B_j,
\end{align*}
according to \eqref{eq:R-cond}. This is, in light of \eqref{eq:B-cond-3}, further equivalent to
\begin{align}\label{eq:R-cond-3}
	R + r + \sum\limits_{j=1}^r A_j + r' + \sum_{j=1}^{r'} A'_j = M + M' - N.
\end{align}

It follows from the above discussion that
\begin{align}\label{eq:D-f-level-2}
	&D_{[M',L'],[M,L];f}\circ \mP_N( u)\\
	&= \sum_{R}\sum_{\mathbf{A}'}\sum_{\mathbf{A}} \mM_{2(A'_{r'}+1)}\cdots \mM_{2(A'_1+1)} \big(f^{(R)}\mM_{2(A_r+1)}\cdots \mM_{2(A_1+1)}(u)\big)\notag\\
	&\quad\times  (-1)^{N+R}\, 2^{N+R} \frac{1}{R!}\prod_{i=1}^r \frac{1}{(A_{i}!)^2}\prod_{i=1}^{r'} \frac{1}{(A'_{i}!)^2}\prod_{i=1}^{r'} \scalebox{0.75}{$\dfrac{1}{\sum\limits_{j=1}^i (A'_{r'+1-j}+1)}$}\notag\\
	&\quad\times (-1)^{M'-r'}\big(M'!\big)\prod_{n=0}^{M'-1}(L'-M'-n)\prod_{i=1}^{r'} \scalebox{0.75}{$\dfrac{1}{L'-2M'+\sum\limits_{j=1}^i (A'_{r'+1-j}+1)}$}\notag\\
	&\quad\times\sum_{\mathbf{B}}  \prod_{i=0}^{r} (-1)^{B_i} \big(B_i!\big)^2 \scalebox{0.8}{$\left(\begin{array}{c}
					N+\sum\limits_{j=1}^i A_j - \sum\limits_{j=0}^{i-1} B_j\\[12pt]
					B_i
				\end{array}\right)$}
	\scalebox{0.8}{$\left(\begin{array}{c}
					L-N-2i-\sum\limits_{j=1}^i A_j - \sum\limits_{j=0}^{i-1} B_j\\[12pt]
					B_i
				\end{array}\right)$},\notag
\end{align}
where $R$, $\mathbf{A}'$, $\mathbf{A}$ and $\mathbf{B}$ are controlled by \eqref{eq:B-cond-3} and \eqref{eq:R-cond-3}.

Now we are ready to produce an explicit expression of $D_{U,V,L,K;f}(\rho^N u)$.

\begin{theorem}\label{generalized-Juhl-linear-formula}
	Let $N$ be a nonnegative integer. Then
	\begin{align}\label{eq:D-general-N}
		&D_{U,V,L,K;f}\circ \mP_N( u) \\
		&\quad = \sum_{R}\sum_{\mathbf{A}'}\sum_{\mathbf{A}} \mM_{2(A'_{r'}+1)}\cdots \mM_{2(A'_1+1)} \big(f^{(R)}\mM_{2(A_r+1)}\cdots \mM_{2(A_1+1)}(u)\big)\notag\\
		&\quad\quad\times (-1)^{N+R}\, 2^{N+R} \frac{1}{R!} \frac{1}{(K+N)(L+U-N)}\notag\\
		&\quad\quad\times \prod_{n=0}^U (K+n) \prod_{n=0}^U (L+n) \prod_{n=0}^{V-1}(L+n)\notag\\
		&\quad\quad\times \prod_{i=1}^r \frac{1}{(A_{i}!)^2}\prod_{i=1}^{r} \scalebox{0.75}{$\dfrac{1}{K+N+\sum\limits_{j=1}^i (A_{j}+1)}$}\prod_{i=1}^{r} \scalebox{0.75}{$\dfrac{1}{L+U-N-\sum\limits_{j=1}^i (A_{j}+1)}$}\notag\\
		&\quad\quad\times \prod_{i=1}^{r'} \frac{1}{(A'_{i}!)^2}\prod_{i=1}^{r'} \scalebox{0.75}{$\dfrac{1}{\sum\limits_{j=1}^i (A'_{r'+1-j}+1)}$}\prod_{i=1}^{r'} \scalebox{0.75}{$\dfrac{1}{L+V-\sum\limits_{j=1}^i (A'_{r'+1-j}+1)}$}, \notag
	\end{align}
	where the summation runs over all nonnegative integers $R$ and all sequences $\mathbf{A}=(A_{1},\ldots,A_{r})$ and $\mathbf{A}'=(A'_1,\ldots,A'_{r'})$ of nonnegative integers such that
	\begin{align}\label{eq:AAR-cond}
		R + r + \sum\limits_{j=1}^r A_j + r' + \sum_{j=1}^{r'} A'_j  = U - N.
	\end{align}
	Here by abuse of notation, we read $f^{(R)}$ as $f^{(R)}\big|_{\rho=0}$.
\end{theorem}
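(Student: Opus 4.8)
The plan is to insert the expansion \eqref{eq:D-f-level-2} of $D_{[M',L'],[M,L];f}\circ\mP_N$ into the definition \eqref{generalized-linear-operator} of $D_{U,V,L,K;f}$, taking $L'=-L-V+2M'$ and replacing the inner parameter $L$ by $L-K+U$, and then to collapse the resulting double sum over the outer index $M$ (so $M'=U-M$) and the inner sequence $\mathbf{B}$ into a single explicit product by one application of the combinatorial identity \eqref{eq:aux-sum-2}. This runs parallel to the proof of \cref{th:D-2k-general}, but is lighter: there is a single inner factor $\cD_{M,L}(u)$, hence a single $\mathbf{B}$-sum and no twisted composition to contend with.

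First I would clear the scalar prefactors. Since $L'=-L-V+2M'$, one has $\prod_{n=0}^{M'-1}(L'-M'-n)=\prod_{n=0}^{M'-1}(-L-V+M'-n)=(-1)^{M'}\,\Gamma(L+V)/\Gamma(L+V-M')$, so the factor $\Gamma(L+V-M')$ appearing in \eqref{generalized-linear-operator} cancels and the whole $V$-dependence pulls out as $\prod_{n=0}^{V-1}(L+n)$; likewise $L'-2M'+\sum_{j=1}^i(A'_{r'+1-j}+1)=-\bigl(L+V-\sum_{j=1}^i(A'_{r'+1-j}+1)\bigr)$, so the corresponding product from \eqref{eq:D-f-u} becomes the $\mathbf{A}'$-product in \eqref{eq:D-general-N} up to a factor $(-1)^{r'}$; and the $(M'!)$ produced by \eqref{eq:D-f-u} cancels the $\Gamma(M'+1)$ sitting in the multinomial coefficient. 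Tracking the signs — $(-1)^{M'-r'}$ from \eqref{eq:D-f-level-2}, the $(-1)^{M'}$ and $(-1)^{r'}$ just produced, and the $(-1)^{B_i}$ retained inside the $\mathbf{B}$-sum — all $M'$-parity cancels, leaving the overall sign $(-1)^{N+R}$. Hence, for each fixed $R,\mathbf{A}',\mathbf{A}$, constrained by $R+r+\sum_{j=1}^rA_j+r'+\sum_{j=1}^{r'}A'_j=U-N$ (this is \eqref{eq:R-cond-3} together with $M+M'=U$, and is exactly \eqref{eq:AAR-cond}), the scalar multiplying the inner $\mathbf{B}$-sum is $\dfrac{\Gamma(U+K+1)}{\Gamma(M+K+1)}\cdot\dfrac{\Gamma(L+M')}{\Gamma(L)}=(M'!)^2\binom{U+K}{M'}\binom{L+M'-1}{M'}$, besides the $V$-factor, the $\mathbf{A}'$- and $\mathbf{A}$-factorials, and $2^{N+R}/R!$.

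Next I would carry out the collapse. The remaining sum runs over $M$ and over $\mathbf{B}=(B_0,\dots,B_r)$ with $r+\sum_{j=0}^rB_j=M$ (this is \eqref{eq:B-cond-3}) and $M'=U-M$. As in \cref{th:D-2k-general}, append $M'$ as a new last entry to obtain $\widehat{\mathbf{B}}=(B_0,\dots,B_r,M')$, which satisfies $(r+1)+\sum_{j=0}^{r+1}\widehat{B}_j=U+1$; the two binomials $\binom{U+K}{M'}$ and $\binom{L+M'-1}{M'}$ are precisely the pair of binomial factors attached to this new entry in \eqref{eq:aux-sum-2}, and $(M'!)^2$ is its squared-factorial weight. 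Applying \eqref{eq:aux-sum-2} with substitutions of the same shape as in the proof of \cref{th:D-2k-general} — namely $\mathbf{A}\mapsto(N,A_1,\dots,A_r)$, $r\mapsto r+1$, $M\mapsto U+1$, and the two remaining scalar parameters $X,Y$ read off from the inner parameter $L-K+U$ and from $K$ — collapses the double sum into $\dfrac{1}{(K+N)(L+U-N)}\prod_{n=0}^U(K+n)\prod_{n=0}^U(L+n)$ times $\prod_{i=1}^r\frac{1}{K+N+\sum_{j=1}^i(A_j+1)}\prod_{i=1}^r\frac{1}{L+U-N-\sum_{j=1}^i(A_j+1)}$. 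Reinstating $\prod_{n=0}^{V-1}(L+n)$, the sign $(-1)^{N+R}$, the factor $2^{N+R}/R!$, the $\mathbf{A}$- and $\mathbf{A}'$-factorials, and the $\mathbf{A}'$-product $\prod_{i=1}^{r'}\frac{1}{\sum_{j=1}^i(A'_{r'+1-j}+1)}\prod_{i=1}^{r'}\frac{1}{L+V-\sum_{j=1}^i(A'_{r'+1-j}+1)}$ coming from \eqref{eq:D-f-u}, one obtains \eqref{eq:D-general-N}.

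The only genuine difficulty is bookkeeping: controlling the several $\Gamma$-quotients and the competing sign conventions, and — the one delicate point — checking that appending $M'$ to $\mathbf{B}$ reproduces exactly the binomial pattern demanded by \eqref{eq:aux-sum-2}, i.e. that $(M'!)^2\binom{U+K}{M'}\binom{L+M'-1}{M'}$ is the $i=r+1$ term of that identity under the chosen substitutions, so that the $M$-sum and the $\mathbf{B}$-sum really merge. This is the same mechanism already used in \cref{th:D-2k-general}, so no new idea is needed; the computation is simply shorter here, with one inner layer in place of a twisted pair. Finally, specializing $U=V=k$, $L=\ell$, $K=0$, $N=0$ and $f=\cI$ in \eqref{eq:D-general-N} recovers $D_{2k;\mathcal I}$; the resulting formula is symmetric under reversing the $\mathbf{A}$- and $\mathbf{A}'$-strings, which yields the asserted formal self-adjointness.
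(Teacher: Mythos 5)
Your proposal is correct and follows essentially the same route as the paper: insert \eqref{eq:D-f-level-2} into \eqref{generalized-linear-operator}, absorb the $\Gamma$-prefactors and signs so the $V$-dependence pulls out as $\prod_{n=0}^{V-1}(L+n)$, append $M'$ to $\mathbf{B}$ to form $\widehat{\mathbf{B}}$ of length $r+2$, and collapse the $M'$- and $\mathbf{B}$-sums by a single application of \eqref{eq:aux-sum-2} with $\mathbf{A}\mapsto(N,A_1,\dots,A_r)$, $r\mapsto r+1$, $M\mapsto U+1$ (the paper takes $X\mapsto L$, $Y\mapsto -K+1$, which is exactly the choice your binomial matching forces). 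The bookkeeping you describe, including the $(M'!)^2\binom{U+K}{M'}\binom{L+M'-1}{M'}$ identification of the $i=r+1$ term, is the paper's $\Pi_{M',\mathbf{B}}$ computation verbatim.
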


\begin{proof}
	We directly use the previous analysis to evaluate each
	\begin{align*}
		D_{[M',-L-V+2M'],[M,L-K+U];f}(\rho^N u).
	\end{align*}
	Firstly, the restriction \eqref{eq:AAR-cond} simply comes from \eqref{eq:R-cond-3}. Meanwhile, by \eqref{eq:D-f-level-2}, we know that
	\begin{align*}
		\binom{U+K}{M'}\frac{\Gamma(L+M')\Gamma(L+V-M')}{\Gamma(L)^2}D_{[M',-L-V+2M'],[M,L-K+U];f}(\rho^N u)
	\end{align*}
	equals
	\begin{align*}
		&\sum_{R}\sum_{\mathbf{A}'}\sum_{\mathbf{A}} \mM_{2(A'_{r'}+1)}\cdots \mM_{2(A'_1+1)} \big(f^{(R)}\mM_{2(A_r+1)}\cdots \mM_{2(A_1+1)}(u)\big)\notag\\
		&\times  (-1)^{N+R}\, 2^{N+R} \frac{1}{R!} \prod_{n=0}^{V-1}(L+n)\prod_{i=1}^r \frac{1}{(A_{i}!)^2}\prod_{i=1}^{r'} \frac{1}{(A'_{i}!)^2}\\
		&\times\prod_{i=1}^{r'} \scalebox{0.75}{$\dfrac{1}{\sum\limits_{j=1}^i (A'_{r'+1-j}+1)}$}\prod_{i=1}^{r'} \scalebox{0.75}{$\dfrac{1}{L+V-\sum\limits_{j=1}^i (A'_{r'+1-j}+1)}$}\cdot \sum_{\mathbf{B}} \Pi_{M',\mathbf{B}},
	\end{align*}
	where
	\begin{align*}
		\Pi_{M',\mathbf{B}}&:= \big(M'!\big)^2 \binom{L+M'-1}{M'}\binom{U+K}{M'}\prod_{i=0}^{r} (-1)^{B_i} \big(B_i!\big)^2\\
		&\ \quad\times \scalebox{0.8}{$\left(\begin{array}{c}
				N+\sum\limits_{j=1}^i A_j - \sum\limits_{j=0}^{i-1} B_j\\[12pt]
				B_i
			\end{array}\right)$}
		\scalebox{0.8}{$\left(\begin{array}{c}
				L-K+U-N-2i-\sum\limits_{j=1}^i A_j - \sum\limits_{j=0}^{i-1} B_j\\[12pt]
				B_i
			\end{array}\right)$}.
	\end{align*}
	In view of the outer summation on $M'$, we are left to analyze
	$$\sum_{M'}\sum_{\mathbf{B}} \Pi_{M',\mathbf{B}}.$$
	Recall from \eqref{eq:B-cond-3} that $\mathbf{B}=(B_0,B_1,\ldots,B_r)$ is such that
	\begin{align*}
		r + \sum_{j=0}^r B_j = M.
	\end{align*}
	Hence, we extend $\mathbf{B}$ to a new sequence $\widehat{\mathbf{B}}$ of length $r+2$:
	\begin{align*}
		\widehat{\mathbf{B}} = (\widehat{B}_0,\widehat{B}_1\ldots, \widehat{B}_{r}, \widehat{B}_{r+1}) \mapsto (B_0,B_1,\ldots,B_r,M').
	\end{align*}
	In particular,
	\begin{align*}
		\sum_{j=0}^{r+1} \widehat{B}_j &= \sum_{j=0}^r B_j + M' = M + M' - r \\
		&= (U+1) - (r+1).
	\end{align*}
	Writing the summation $\sum_{M'}\sum_{\mathbf{B}} \Pi_{M',\mathbf{B}}$ in terms of $\widehat{\mathbf{B}}$, we see that
	\begin{align*}
		&\sum_{\widehat{\mathbf{B}}} \big(\widehat{B}_{r+1}!\big)^2 \binom{L+\widehat{B}_{r+1}-1}{\widehat{B}_{r+1}}\binom{U+K}{\widehat{B}_{r+1}}\prod_{i=0}^{r} (-1)^{\widehat{B}_i} \big(\widehat{B}_i!\big)^2\\
		&\times \scalebox{0.8}{$\left(\begin{array}{c}
				N+\sum\limits_{j=1}^i A_j - \sum\limits_{j=0}^{i-1} \widehat{B}_j\\[12pt]
				\widehat{B}_i
			\end{array}\right)$}
		\scalebox{0.8}{$\left(\begin{array}{c}
				L-K+U-N-2i-\sum\limits_{j=1}^i A_j - \sum\limits_{j=0}^{i-1} \widehat{B}_j\\[12pt]
				\widehat{B}_i
			\end{array}\right)$}
	\end{align*}
	equals
	\begin{align*}
		&\frac{1}{(K+N)(L+U-N)} \prod_{n=0}^U (K+n) \prod_{n=0}^U (L+n)\\
		&\times \prod_{i=1}^{r} \scalebox{0.75}{$\dfrac{1}{K+N+\sum\limits_{j=1}^i (A_{j}+1)}$}\prod_{i=1}^{r} \scalebox{0.75}{$\dfrac{1}{L+U-N-\sum\limits_{j=1}^i (A_{j}+1)}$},
	\end{align*}
	where we have applied \eqref{eq:aux-sum-2} with the substitutions:
	\begin{align*}
		\mathbf{A} &\mapsto (N,A_1,\ldots,A_r),\\
		r &\mapsto r+1,\\
		M &\mapsto U+1,\\
		X &\mapsto L,\\
		Y &\mapsto -K+1.
	\end{align*}
	Thus, our proof is complete.
\end{proof}

Now we consider the case that $D_{U,V,L,K;f}\circ \mP_N$ is formally self-adjoint. In this circumstance, the coefficient $\mC_{(\mathbf{A}', \mathbf{A})}$ of each monomial $$\mM_{2(A'_{r'}+1)}\cdots \mM_{2(A'_1+1)} \big(f^{(R)}\mM_{2(A_r+1)}\cdots \mM_{2(A_1+1)}(u)\big)$$ should satisfy $\mC_{(\mathbf{A}', \mathbf{A})}=\mC_{(\mathbf{A}^{-1}, (\mathbf{A}')^{-1})}$, which implies that $K=0$, $N=0$ and $V=U$. 

\begin{corollary}\label{formal-self-adjointness}
	We have
	\begin{align}
		&D_{U,U,L,0;f}(u) \\
		&\quad = \sum_{R}\sum_{\mathbf{A}'}\sum_{\mathbf{A}} \mM_{2(A'_{r'}+1)}\cdots \mM_{2(A'_1+1)} \big(f^{(R)}\mM_{2(A_r+1)}\cdots \mM_{2(A_1+1)}(u)\big)\notag\\
		&\quad\quad\times (-1)^{R}\, 2^{R}\, \frac{U!}{R!}\, \prod_{n=0}^{U-1} (L+n)^2 \prod_{i=1}^r \frac{1}{(A_{i}!)^2} \prod_{i=1}^{r'} \frac{1}{(A'_{i}!)^2}\notag\\
		&\quad\quad\times \prod_{i=1}^{r} \scalebox{0.75}{$\dfrac{1}{\sum\limits_{j=1}^i (A_{j}+1)}$}\prod_{i=1}^{r} \scalebox{0.75}{$\dfrac{1}{L+U-\sum\limits_{j=1}^i (A_{j}+1)}$} \notag\\
		&\quad\quad\times \prod_{i=1}^{r'} \scalebox{0.75}{$\dfrac{1}{\sum\limits_{j=1}^i (A'_{r'+1-j}+1)}$}\prod_{i=1}^{r'} \scalebox{0.75}{$\dfrac{1}{L+U-\sum\limits_{j=1}^i (A'_{r'+1-j}+1)}$},\notag
	\end{align}
	where the summation runs over all nonnegative integers $R$ and all sequences $\mathbf{A}=(A_{1},\ldots,A_{r})$ and $\mathbf{A}'=(A'_1,\ldots,A'_{r'})$ of nonnegative integers such that
	\begin{align*}
		R + r + \sum\limits_{j=1}^r A_j + r' + \sum_{j=1}^{r'} A'_j  = U.
	\end{align*}
	In particular, $D_{U,U,L,0;f}$ is formally self-adjoint.
\end{corollary}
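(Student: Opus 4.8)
The plan is to obtain \cref{formal-self-adjointness} as a direct specialization of \cref{generalized-Juhl-linear-formula}, followed by an inspection of the symmetry of the resulting coefficients; essentially all the analytic work has already been done in proving \cref{generalized-Juhl-linear-formula}.

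First I would set $N=0$ and $V=U$ in \eqref{eq:D-general-N}. The right-hand side of \eqref{eq:D-general-N} is a rational function of $K$ which is regular at $K=0$: the apparent pole $\tfrac{1}{K+N}=\tfrac1K$ is cancelled by the factor $\prod_{n=0}^U(K+n)$, whose $n=0$ term is $K$ itself. Concretely $\tfrac1K\prod_{n=0}^U(K+n)=\prod_{n=1}^U(K+n)\to U!$ as $K\to 0$, while $\tfrac{1}{L+U}\prod_{n=0}^U(L+n)=\prod_{n=0}^{U-1}(L+n)$; together with the factor $\prod_{n=0}^{V-1}(L+n)=\prod_{n=0}^{U-1}(L+n)$ this collapses the scalar prefactor to $(-1)^R2^R\tfrac{U!}{R!}\prod_{n=0}^{U-1}(L+n)^2$. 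The remaining products in \eqref{eq:D-general-N} reduce, under $N=0$ and $V=U$, to exactly the products displayed in \cref{formal-self-adjointness}, and the constraint \eqref{eq:AAR-cond} becomes $R+r+\sum_j A_j+r'+\sum_j A'_j=U$. This yields the asserted formula.

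For the formal self-adjointness I would write the coefficient of the monomial $\mM_{2(A'_{r'}+1)}\cdots\mM_{2(A'_1+1)}\big(f^{(R)}\mM_{2(A_r+1)}\cdots\mM_{2(A_1+1)}(u)\big)$ as $g(R)\,h(\mathbf{A})\,h^\ast(\mathbf{A}')$, where $g(R)=(-1)^R2^R\tfrac{U!}{R!}\prod_{n=0}^{U-1}(L+n)^2$ depends on $R$ alone, $h(\mathbf{A})=\prod_{i=1}^r\tfrac1{(A_i!)^2}\prod_{i=1}^r\big(\sum_{j=1}^i(A_j+1)\big)^{-1}\prod_{i=1}^r\big(L+U-\sum_{j=1}^i(A_j+1)\big)^{-1}$, and $h^\ast(\mathbf{A}')$ is the same product with $A_j$ replaced throughout by $A'_{r'+1-j}$. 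The key point is that $h^\ast(\mathbf{B})=h(\mathbf{B}^{-1})$ for every finite sequence $\mathbf{B}$: passing to the reversed sequence $\mathbf{B}^{-1}$ turns $\sum_{j=1}^i(B_j+1)$ into $\sum_{j=1}^i(B_{r+1-j}+1)$ and leaves $\prod_i(B_i!)^{-2}$ unchanged. Hence the coefficient is invariant under the transposition $(\mathbf{A}',\mathbf{A})\mapsto(\mathbf{A}^{-1},(\mathbf{A}')^{-1})$, and the index constraint is invariant under it as well. Since every $\mM_{2(N+1)}$ is formally self-adjoint (as recorded just after \eqref{Laplacian-expansion}) and multiplication by the function $f^{(R)}\big|_{\rho=0}$ on $M$ is self-adjoint, the formal adjoint (with respect to the pairing $\int_M\cdot\,\cdot\,\dvol$) of the $(\mathbf{A}',\mathbf{A},R)$-term is exactly the $(\mathbf{A}^{-1},(\mathbf{A}')^{-1},R)$-term; combining this with the coefficient symmetry shows that the Dirichlet form $\int_M u_0\,D_{U,U,L,0;f}(u_1)\,\dvol$ is symmetric in $u_0$ and $u_1$, which is the asserted formal self-adjointness.

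I do not expect a genuine obstacle here once \cref{generalized-Juhl-linear-formula} is available. The two points demanding a little care are (i) justifying the evaluation at $K=0$ in spite of the superficial $\tfrac1K$ in \eqref{eq:D-general-N}, which is settled by the cancellation noted above (and by the fact that \eqref{eq:D-general-N} is an identity of rational functions in $K$), and (ii) recognizing that the reversed-index product attached to $\mathbf{A}'$ is literally the $\mathbf{A}$-product evaluated on the reversed sequence — this is exactly what makes the transposition symmetry, and hence the self-adjointness, transparent.
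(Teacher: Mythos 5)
Your proposal is correct and follows essentially the same route as the paper: the formula is obtained by specializing \eqref{eq:D-general-N} at $N=0$, $K=0$, $V=U$ (with the $\tfrac{1}{K+N}$ factor cancelled identically against the $n=0$ term of $\prod_{n=0}^{U}(K+n)$), and the self-adjointness follows from the coefficient symmetry $\mC_{(\mathbf{A}',\mathbf{A})}=\mC_{(\mathbf{A}^{-1},(\mathbf{A}')^{-1})}$ together with the formal self-adjointness of the $\mM$-operators, exactly as in the discussion preceding the corollary. Your explicit factorization $g(R)\,h(\mathbf{A})\,h(\,(\mathbf{A}')^{-1})$ just spells out what the paper leaves implicit.
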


The above result matches the shape of the operators $D_{2k;\mI}$ according to our earlier discussion:
\begin{align*}
	D_{2k;\mI}(u) = D_{k,k,\ell,0;\cI}(u),
\end{align*}
thereby closing the proof of Theorem \ref{JUhl-formula-linear}.
\appendix
\section{Combinatorial identities}\label{sec:appendix}

In this Appendix, we prove two combinatorial identities required for our arguments in the main context, and these identities are closely related to the evaluation of hypergeometric series. To begin with, we recall that the \emph{Pochhammer symbol} is defined for $n\in\mathbb{N}\cup\{\infty\}$,
\begin{align*}
	(a)_n := \prod_{k=0}^{n-1} (a+k),
\end{align*}
and that the \emph{hypergeometric series} is defined by
\begin{align*}
	{}_{r}F_{s} \left(\begin{matrix}
		a_1,\ldots,a_r\\
		b_1,\ldots,b_s
	\end{matrix};z\right) := \sum_{n\ge 0}\frac{(a_1)_n\cdots (a_r)_n}{(b_1)_n \cdots (b_s)_n}\frac{z^n}{n!}.
\end{align*}

We need the \emph{Pfaff--Saalsch\"utz summation formula}~\cite[p.~69, eq.~(2.2.8)]{AndrewsAskeyRoy1999}.

\begin{lemma}[Pfaff--Saalsch\"utz]
	\begin{align}\label{eq:P-S}
		{}_{3}F_{2}\left(\begin{matrix}
			-n,a,b\\
			c,1+a+b-c-n
		\end{matrix};1\right) = \frac{(c-a)_n (c-b)_n}{(c)_n (c-a-b)_n}.
	\end{align}
\end{lemma}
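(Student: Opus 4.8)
The plan is to derive \eqref{eq:P-S} by comparing power-series coefficients in Euler's transformation formula for the Gauss hypergeometric function; since $n$ is fixed and both sides of \eqref{eq:P-S} are rational functions of $a,b,c$, this is legitimate and, once Euler's transformation is granted, purely a matter of bookkeeping with Pochhammer symbols.

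First I would recall Euler's transformation
\[
(1-z)^{a+b-c}\,{}_{2}F_{1}(a,b;c;z)={}_{2}F_{1}(c-a,c-b;c;z),\qquad |z|<1,
\]
which is classical and may itself be obtained by applying Pfaff's transformation ${}_{2}F_{1}(a,b;c;z)=(1-z)^{-a}{}_{2}F_{1}\bigl(a,c-b;c;\tfrac{z}{z-1}\bigr)$ twice; for the present purpose it can simply be cited. Expanding $(1-z)^{a+b-c}=\sum_{m\ge 0}\frac{(c-a-b)_{m}}{m!}z^{m}$ and forming the Cauchy product on the left, then reading off the coefficient of $z^{n}$ on each side, yields the finite identity
\[
\sum_{k=0}^{n}\frac{(a)_{k}(b)_{k}}{(c)_{k}\,k!}\cdot\frac{(c-a-b)_{n-k}}{(n-k)!}=\frac{(c-a)_{n}(c-b)_{n}}{(c)_{n}\,n!}.
\]

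Next I would convert the $k$-sum into a terminating ${}_{3}F_{2}$ at $1$ using only elementary Pochhammer manipulations: from $(-n)_{k}=(-1)^{k}n!/(n-k)!$ one has $1/(n-k)!=(-1)^{k}(-n)_{k}/n!$, while the reflection identity $(x)_{k}=(-1)^{k}(1-x-k)_{k}$ with $x=c-a-b+n-k$, together with $(c-a-b)_{n-k}\,(c-a-b+n-k)_{k}=(c-a-b)_{n}$, gives $(c-a-b)_{n-k}=(c-a-b)_{n}\big/\bigl[(-1)^{k}(1+a+b-c-n)_{k}\bigr]$. Substituting these, the signs cancel, and after multiplying through by $n!\big/(c-a-b)_{n}$ the displayed identity becomes
\[
\sum_{k=0}^{n}\frac{(-n)_{k}(a)_{k}(b)_{k}}{(c)_{k}(1+a+b-c-n)_{k}\,k!}=\frac{(c-a)_{n}(c-b)_{n}}{(c)_{n}(c-a-b)_{n}},
\]
which is exactly \eqref{eq:P-S}.

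I do not expect a genuinely hard step here: the only nontrivial input is Euler's (equivalently Pfaff's) transformation, which we prefer to quote. Should a self-contained treatment be wanted, the main obstacle would be re-proving that transformation; alternatively, one can note that for fixed $n$ both sides of \eqref{eq:P-S} are rational in $a,b,c$ of controlled degree and check equality on sufficiently many specializations (for instance $a$ and $b$ negative integers), where the claim reduces to iterated applications of the Chu--Vandermonde summation.
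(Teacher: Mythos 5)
Your argument is correct: Euler's transformation, the Cauchy-product coefficient extraction, and the Pochhammer manipulations $1/(n-k)!=(-1)^k(-n)_k/n!$ and $(c-a-b)_{n-k}=(c-a-b)_n/\bigl[(-1)^k(1+a+b-c-n)_k\bigr]$ all check out, and the rationality-in-$(a,b,c)$ remark legitimately disposes of degenerate parameter values. The paper does not prove this lemma at all --- it simply cites Andrews--Askey--Roy (p.~69, eq.~(2.2.8)) --- and your derivation is essentially the standard proof given in that reference, so there is nothing further to reconcile.
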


The first combinatorial identity is as follows.

\begin{lemma}
	Let $\mathbf{A}=(A_{1},\ldots,A_{r})$ be a sequence (which may be empty) of nonnegative integers. Write $N=\sum_{i=1}^r (A_i+1)$. Then for any indeterminate $X$,
	\begin{align}\label{eq:aux-sum-1}
		&\sum_{\mathbf{B}} \prod_{i=1}^{r} \big(B_i!\big)^2 \scalebox{0.8}{$\left(\begin{array}{c}
			\sum\limits_{j=1}^i A_j - \sum\limits_{j=1}^{i-1} B_j\\[12pt]
			B_i
		\end{array}\right)$}
		\scalebox{0.8}{$\left(\begin{array}{c}
			X-2i-\sum\limits_{j=1}^i A_j - \sum\limits_{j=1}^{i-1} B_j\\[12pt]
			B_i
		\end{array}\right)$}\\
		&\qquad = \big(N!\big) \prod_{i=1}^r \scalebox{0.75}{$\dfrac{1}{\sum\limits_{j=1}^i (A_{r+1-j}+1)}$} \prod_{n=0}^{N-1}(X-N-n) \prod_{i=1}^r \scalebox{0.75}{$\dfrac{1}{X-2N+\sum\limits_{j=1}^i (A_{r+1-j}+1)}$},\notag
	\end{align}
	where the summation runs over sequences $\mathbf{B}=(B_1,\ldots,B_r)$ of nonnegative integers such that
	\begin{align*}
		\sum_{i=1}^\ell B_i = \sum_{i=1}^r A_i = N-r.
	\end{align*}
\end{lemma}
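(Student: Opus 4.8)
The plan is to prove \eqref{eq:aux-sum-1} by induction on the length $r$, each inductive step being a single application of the Pfaff--Saalsch\"utz summation \eqref{eq:P-S}. Throughout write $\mathcal{L}_r(A_1,\dots,A_r;X)$ and $\mathcal{R}_r(A_1,\dots,A_r;X)$ for the left- and right-hand sides of \eqref{eq:aux-sum-1}, set $N=\sum_{i=1}^r(A_i+1)$ and $\sigma_i=\sum_{j=1}^i(A_{r+1-j}+1)$, so that $\sigma_r=N$ and $\sigma_{r-1}=N-A_1-1$. The cases $r=0$ (both sides equal $1$) and $r=1$ (here the constraint forces $B_1=A_1$, so $\mathcal{L}_1=(A_1!)^2\binom{X-2-A_1}{A_1}$, which equals $\mathcal{R}_1$ after one gamma-function rearrangement) are immediate.

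For the inductive step, fix $r\ge 2$ and split off the innermost summation variable $B_1$. The factor indexed by $i=1$ is $(B_1!)^2\binom{A_1}{B_1}\binom{X-2-A_1}{B_1}$, which in particular restricts $0\le B_1\le A_1$. A careful but routine chase of the nested partial sums $\sum_{j=1}^iA_j$ and $\sum_{j=1}^{i-1}B_j$ through the remaining factors $2\le i\le r$, together with the relabelling $j=i-1$, shows that these factors assemble into exactly $\mathcal{L}_{r-1}\big(A_1+A_2-B_1,A_3,\dots,A_r;\,X-2-2B_1\big)$. Since $B_1\le A_1$ we have $A_1+A_2-B_1\ge A_2\ge 0$, so the reduced sequence is again a sequence of nonnegative integers, and the constraint $\sum B_i=N-r$ of the original sum restricts to precisely the correct constraint for the shorter sequence. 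By the inductive hypothesis,
\[
	\mathcal{L}_r(\mathbf{A};X)=\sum_{B_1=0}^{A_1}(B_1!)^2\binom{A_1}{B_1}\binom{X-2-A_1}{B_1}\,\mathcal{R}_{r-1}\big(A_1+A_2-B_1,A_3,\dots,A_r;\,X-2-2B_1\big).
\]

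Next I would insert the closed form of $\mathcal{R}_{r-1}$. For the reduced data one has $\widetilde N=N-1-B_1$, with reversed partial sums $\widetilde\sigma_i=\sigma_i$ for $i\le r-2$ and $\widetilde\sigma_{r-1}=\widetilde N$; the decisive point is that the weight $-2i$ in \eqref{eq:aux-sum-1} makes the combinations $(X-2-2B_1)-2\widetilde N+\widetilde\sigma_i$ and $(X-2-2B_1)-2\widetilde N+1$ independent of $B_1$, equal to $X-2N+\sigma_i$ and $X-2N+1$ respectively. After this collapse the only surviving $B_1$-dependence in the summand is $(N-2-B_1)!\,\Gamma(X-1-N-B_1)$, and absorbing the binomials through $(B_1!)^2\binom{A_1}{B_1}\binom{X-2-A_1}{B_1}=A_1!\,\Gamma(X-1-A_1)\big/\big[(A_1-B_1)!\,\Gamma(X-1-A_1-B_1)\big]$ turns the $B_1$-sum into a constant multiple of a terminating ${}_3F_2$ at argument $1$ with upper parameters $-A_1$, $A_1+2-X$, $1$ and lower parameters $2-N$, $N+2-X$. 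This series is Saalsch\"utzian (the sum of the lower parameters exceeds that of the upper by exactly $1$ --- again a consequence of the $-2i$ weight), and since $A_1\le N-2$ for $r\ge 2$ none of the Pochhammer symbols occurring in \eqref{eq:P-S} degenerates, so \eqref{eq:P-S} applies with $n=A_1$. Evaluating the resulting Pochhammer symbols gives $\frac{(N-1)!\,\Gamma(X-N)}{(N-1-A_1)(X-N-A_1-1)}$ for the $B_1$-sum; multiplying by the $B_1$-free prefactor and rewriting $\sigma_{r-1}=N-A_1-1$, $\sigma_r=N$, $X-2N+\sigma_{r-1}=X-N-A_1-1$, $X-2N+\sigma_r=X-N$, $\Gamma(X-N+1)=(X-N)\Gamma(X-N)$ then yields exactly $\mathcal{R}_r(\mathbf{A};X)$, completing the induction.

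The step I expect to be the main obstacle is the bookkeeping in the second paragraph: verifying that splitting off $B_1$ really does reproduce the same identity one length shorter, with first entry shifted to $A_1+A_2-B_1$ and $X$ shifted to $X-2-2B_1$. This amounts to tracking the telescoping partial sums through each binomial coefficient, which is purely mechanical but easy to get wrong; once it is in place, the Saalsch\"utzian structure of the remaining single sum --- and hence the appearance of the precise denominators $X-2N+\sigma_i$ in $\mathcal{R}_r$ --- is forced, and the rest is a routine manipulation of gamma functions.
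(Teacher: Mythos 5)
Your proposal is correct and follows essentially the same route as the paper's own proof: induction on $r$, splitting off the $B_1$-summation, applying the inductive hypothesis to the shortened sequence $(A_1+A_2-B_1,A_3,\dots,A_r)$ with the shifts $N\mapsto N-1-B_1$, $X\mapsto X-2-2B_1$, and then evaluating the surviving single sum as the terminating Saalsch\"utzian ${}_3F_2\bigl(-A_1,\,A_1+2-X,\,1;\,2-N,\,N+2-X;\,1\bigr)$ via \eqref{eq:P-S} — exactly the series the paper reduces to in \eqref{eq:aux-sum-1-step}. The only (harmless, in fact slightly tidier) deviation is that you verify $r=1$ separately and run the inductive step only for $r\ge 2$, which sidesteps the degenerate denominator $N-A_1-1=0$ that the paper's uniform step would encounter at $r=1$.
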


\begin{proof}
	We prove our result by induction on $r$, the length of the sequence $\mathbf{A}$. If $r=0$, then $\mathbf{A}$ is the empty sequence, and therefore the only choice of $\mathbf{B}$ is also the empty sequence. It follows that in this case, both sides of \eqref{eq:aux-sum-1} are identical to one.
	
	Now we assume that the relation is true for all sequences $\mathbf{A}$ of length $0,\ldots,r-1$ for a certain $r\ge 1$, and we are going to prove it for an arbitrary sequence $\mathbf{A}=(A_{1},\ldots,A_{r})$. To do so, we single out the summation on $B_1$ and get
	\begin{align*}
		\LHS\eqref{eq:aux-sum-1} &= \sum_{B_1} \big(B_1!\big)^2 \binom{A_1}{B_1} \binom{X-2-A_1}{B_1}\\
		&\quad\times \sum_{B_2,\ldots,B_r} \prod_{i=2}^{r} \big(B_i!\big)^2 \scalebox{0.8}{$\left(\begin{array}{c}
			\sum\limits_{j=1}^i A_j - \sum\limits_{j=1}^{i-1} B_j\\[12pt]
			B_i
		\end{array}\right)$}
		\scalebox{0.8}{$\left(\begin{array}{c}
			X-2i-\sum\limits_{j=1}^i A_j - \sum\limits_{j=1}^{i-1} B_j\\[12pt]
			B_i
		\end{array}\right)$}.
	\end{align*}
	For the inner summation, we apply the inductive assumption for the following sequence of length $r-1$:
	\begin{align*}
		\widehat{\mathbf{A}} = (A_1+A_2-B_1,A_3,A_4,\ldots, A_r),
	\end{align*}
	and invoke the following substitutions of variables:
	\begin{align*}
		r &\mapsto r-1,\\
		N &\mapsto N-B_1-1,\\
		X &\mapsto X-2B_1-2.
	\end{align*}
	Thus,
	\begin{align*}
		\sum_{B_2,\ldots,B_r} &= \big((N-B_1-1)!\big) \prod_{n=0}^{N-B_1-2}(X-N-B_1-1-n)\\
		&\quad\times \scalebox{0.8}{$\dfrac{1}{-B_1-1+\sum\limits_{j=1}^r (A_{r+1-j}+1)}$}\cdot \scalebox{0.8}{$\dfrac{1}{X-2N-B_1-1+\sum\limits_{j=1}^r (A_{r+1-j}+1)}$}\\
		&\quad\times \prod_{i=1}^{r-2} \scalebox{0.75}{$\dfrac{1}{\sum\limits_{j=1}^i (A_{r+1-j}+1)}$}  \prod_{i=1}^{r-2} \scalebox{0.75}{$\dfrac{1}{X-2N+\sum\limits_{j=1}^i (A_{r+1-j}+1)}$}.
	\end{align*}
	Now we have, with $N=\sum_{j=1}^r (A_j+1)$ in mind, that
	\begin{align*}
		\LHS\eqref{eq:aux-sum-1} &= \prod_{i=1}^{r-2} \scalebox{0.75}{$\dfrac{1}{\sum\limits_{j=1}^i (A_{r+1-j}+1)}$} \prod_{i=1}^{r-2} \scalebox{0.75}{$\dfrac{1}{X-2N+\sum\limits_{j=1}^i (A_{r+1-j}+1)}$}\\
		&\quad\times \sum_{B_1\ge 0} \big((N-B_1-2)!\big) \big(B_1!\big)^2 \binom{A_1}{B_1} \binom{X-2-A_1}{B_1}\\
		&\quad\times \prod_{n=N+B_1+2}^{2N-1}(X-n).
	\end{align*}
	It remains to prove that the above equals
	\begin{align*}
		\big(N!\big) \prod_{i=1}^r \scalebox{0.75}{$\dfrac{1}{\sum\limits_{j=1}^i (A_{r+1-j}+1)}$} \prod_{n=0}^{N-1}(X-N-n) \prod_{i=1}^r \scalebox{0.75}{$\dfrac{1}{X-2N+\sum\limits_{j=1}^i (A_{r+1-j}+1)}$},
	\end{align*}
	or equivalently,
	\begin{align}\label{eq:aux-sum-1-step}
		&\frac{(N-1)(X-N-1)}{(N-A_1-1)(X-N-A_1-1)}\\
		&\qquad\qquad= \sum_{B_1\ge 0} \big(B_1!\big)^2 \binom{A_1}{B_1} \binom{X-2-A_1}{B_1}\prod_{n=2}^{B_1}\frac{1}{(N-n)(X-N-n)}.\notag
	\end{align}
	We reformulate the right-hand side of the above in terms of the hypergeometric series and obtain
	\begin{align*}
		\RHS\eqref{eq:aux-sum-1-step} &= \sum_{B_1\ge 0} \frac{(-1)^{B_1} (-A_1)_{B_1} (-1)^{B_1} (-X+2+A_1)_{A_1}}{(-1)^{B_1} (-N+2)_{B_1} (-1)^{B_1} (-X+N+2)_{B_1}}\\
		&= {}_{3}F_{2}\left(\begin{matrix}
			-A_1,-X+2+A_1,1\\
			-X+N+2,-N+2
		\end{matrix};1\right)\\
		\text{\tiny (by \eqref{eq:P-S})}&= \frac{(N-A_1)_{A_1} (-X+N+1)_{A_1}}{(-X+N+2)_{A_1} (N-A_1-1)_{A_{1}}}\\
		&= \frac{(N-1)(-X+N+1)}{(-X+N+A_1+1)(N-A_1-1)},
	\end{align*}
	thereby arriving at the left-hand side of \eqref{eq:aux-sum-1-step}. This closes the required inductive argument.
\end{proof}

The next identity is of the same flavor but bears a more complicated sum side.

\begin{lemma}\label{combinatorial1}
	Let $\mathbf{A}=(A_{1},\ldots,A_{r})$ be a sequence (which may be empty) of nonnegative integers and let $M\ge r$ be an integer. Then for any indeterminates $X$ and $Y$,
	\begin{align}\label{eq:aux-sum-2}
		&\sum_{\mathbf{C}} \big(C_{r+1}!\big)^2 \binom{X+C_{r+1}-1}{C_{r+1}}\binom{-Y+M}{C_{r+1}}\prod_{i=1}^{r} (-1)^{C_i}\big(C_i!\big)^2\\
		&\times \scalebox{0.8}{%
			$\left(\begin{array}{c}
				\sum\limits_{j=1}^i A_j - \sum\limits_{j=1}^{i-1} C_j\\[12pt]
				C_i
			\end{array}\right)
			\left(\begin{array}{c}
				X+Y+M-2i-\sum\limits_{j=1}^i A_j - \sum\limits_{j=1}^{i-1} C_j\\[12pt]
				C_i
			\end{array}\right)$
		}\notag\\
		&\qquad\qquad\qquad\qquad\qquad\qquad = (-1)^{M-r} \prod_{n=0}^{M-1}(X+n) \prod_{i=1}^r \scalebox{0.65}{$\dfrac{1}{X+M-\sum\limits_{j=1}^i (A_{j}+1)}$}\notag\\
		&\qquad\qquad\qquad\qquad\qquad\qquad\quad\times \prod_{n=0}^{M-1}(Y-1-n) \prod_{i=1}^r \scalebox{0.65}{$\dfrac{1}{Y-\sum\limits_{j=1}^i (A_{j}+1)}$},\notag
	\end{align}
	where the summation runs over sequences $\mathbf{C}=(C_1,\ldots,C_{r+1})$ of nonnegative integers such that
	\begin{align*}
		\sum_{i=1}^{r+1} C_i = M-r.
	\end{align*}
\end{lemma}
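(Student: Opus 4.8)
The plan is to prove \eqref{eq:aux-sum-2} by induction on $r$, the length of $\mathbf A$, in the spirit of the proof of \eqref{eq:aux-sum-1} but with the Pfaff--Saalsch\"utz evaluation \eqref{eq:P-S} replacing the telescoping used there; I will assume throughout that the partition constraint $\sum_{i=1}^{r+1}C_i=M-r$ is compatible with all binomials being ``in range'' (e.g.\ $M\ge r+\sum_{j=1}^r A_j$, which holds in the applications and is preserved by the induction). For the base case $r=0$ the sequences $\mathbf A$ and $\mathbf C=(C_1)$ are forced with $C_1=M$, so the left-hand side collapses to $\bigl(M!\bigr)^2\binom{X+M-1}{M}\binom{-Y+M}{M}$; expanding the two binomials as $(X)_M/M!$ and $(-1)^M\prod_{j=1}^M(Y-j)/M!$ gives $(-1)^M(X)_M\prod_{j=1}^M(Y-j)$, which is exactly $\RHS\eqref{eq:aux-sum-2}$ with $r=0$.

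For the inductive step, assume \eqref{eq:aux-sum-2} for all sequences of length at most $r-1$ and single out the $C_1$-summation. The inner sum over $(C_2,\dots,C_{r+1})$, now constrained by $\sum_{i=2}^{r+1}C_i=(M-1-C_1)-(r-1)$, is to be recognized as the left-hand side of \eqref{eq:aux-sum-2} for the shifted data $\widehat{\mathbf A}=(A_1+A_2-C_1,\,A_3,\dots,A_r)$, $\widehat r=r-1$, $\widehat M=M-1-C_1$, $\widehat X=X$, $\widehat Y=Y-1-C_1$; when $r=1$ the sequence $\widehat{\mathbf A}$ is empty and the inner sum is the base case, and in general $\widehat A_1=A_1+A_2-C_1\ge A_2\ge 0$ because $\binom{A_1}{C_1}\ne 0$ forces $C_1\le A_1$. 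The point that makes this work is that $X$ and $Y$ enter \eqref{eq:aux-sum-2} in two distinct guises --- inside the terminal factor as $X$ and $-Y+M$, and inside the interior factors only through the combination $X+Y+M-2i$ --- so the three shifted parameters $\widehat X,\widehat Y,\widehat M$ must be chosen so that \emph{both} guises transform correctly at once; a short computation shows the displayed choice is the unique one that does. Feeding this into the inductive hypothesis and using $\widehat X+\widehat M-\sum_{j=1}^i(\widehat A_j+1)=X+M-\sum_{j=1}^{i+1}(A_j+1)$ and $\widehat Y-\sum_{j=1}^i(\widehat A_j+1)=Y-\sum_{j=1}^{i+1}(A_j+1)$ for $1\le i\le r-1$, one checks that all factors of $\RHS\eqref{eq:aux-sum-2}$ indexed by $i=2,\dots,r$, as well as the overall sign $(-1)^{M-r}$, come out free of $C_1$ and pull out of the $C_1$-sum.

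What then remains is the single identity
\begin{equation*}
	S:=\sum_{C_1}\bigl(C_1!\bigr)^2\binom{A_1}{C_1}\binom{X+Y+M-2-A_1}{C_1}\,(X)_{M-1-C_1}(Y-M)_{M-1-C_1}=\frac{(X)_M\,(Y-1)\,(Y-M)_{M-1}}{(X+M-A_1-1)(Y-A_1-1)}.
\end{equation*}
Rewriting $(X)_{M-1-C_1}=(X)_{M-1}/\bigl((-1)^{C_1}(2-M-X)_{C_1}\bigr)$, $(Y-M)_{M-1-C_1}=(Y-M)_{M-1}/\bigl((-1)^{C_1}(2-Y)_{C_1}\bigr)$ and $\bigl(C_1!\bigr)^2\binom{A_1}{C_1}\binom{X+Y+M-2-A_1}{C_1}=(-A_1)_{C_1}(A_1-X-Y-M+2)_{C_1}$ turns $S$ into $(X)_{M-1}(Y-M)_{M-1}$ times
\begin{equation*}
	{}_{3}F_{2}\!\left(\begin{matrix}-A_1,\ A_1-X-Y-M+2,\ 1\\ 2-M-X,\ 2-Y\end{matrix};1\right),
\end{equation*}
which is terminating and balanced (the numerator parameters sum to $3-X-Y-M$, one less than the denominator sum $4-X-Y-M$), so \eqref{eq:P-S} applies with $n=A_1$, $a=A_1-X-Y-M+2$, $b=1$, $c=2-M-X$ and evaluates it to $(Y-A_1)_{A_1}(1-M-X)_{A_1}\big/\bigl((2-M-X)_{A_1}(Y-A_1-1)_{A_1}\bigr)$. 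Telescoping each Pochhammer quotient collapses this to $(Y-1)(1-M-X)\big/\bigl((Y-A_1-1)(A_1+1-M-X)\bigr)$; recombining with $(X)_{M-1}(Y-M)_{M-1}$ and using $1-M-X=-(X+M-1)$, $A_1+1-M-X=-(X+M-A_1-1)$ yields the claimed value of $S$, hence \eqref{eq:aux-sum-2}.

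The main obstacle is the inductive step: identifying the substitutions $\widehat X=X$, $\widehat Y=Y-1-C_1$, $\widehat M=M-1-C_1$ so that the two roles of $(X,Y)$ are respected simultaneously, and then massaging the residual $C_1$-sum into a \emph{balanced} (Saalsch\"utzian) ${}_{3}F_{2}$; once it is in Saalsch\"utzian form, evaluating it via \eqref{eq:P-S} and cleaning up the Pochhammer bookkeeping is routine.
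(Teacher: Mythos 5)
Your proof is correct and follows essentially the same route as the paper's: induction on $r$ with the same base case, the same splitting off of the $C_1$-sum with shifted data $(\widehat{\mathbf A},\widehat M,\widehat X,\widehat Y)=\bigl((A_1+A_2-C_1,A_3,\ldots,A_r),\,M-1-C_1,\,X,\,Y-1-C_1\bigr)$, and the same reduction of the residual $C_1$-sum to a terminating Saalsch\"utzian ${}_{3}F_{2}$ evaluated by Pfaff--Saalsch\"utz, differing only in trivial bookkeeping (Pochhammer form, and choosing $c=2-M-X$ where the paper takes $c=2-Y$). Your standing hypothesis $M\ge r+\sum_j A_j$ is a reasonable (indeed necessary) addition: it is what ensures the $C_1$-sum terminates at $C_1=A_1$ before hitting the cutoff $C_1=M-r$, so that the ${}_{3}F_{2}$ evaluation really computes the constrained sum --- a point the paper leaves implicit even though its stated hypothesis $M\ge r$ alone does not suffice (e.g.\ $r=1$, $A_1=1$, $M=1$), while the stronger condition does hold, and is preserved by the induction, in all applications.
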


\begin{proof}
	We prove this identity by induction on the length $r$ of the sequence $\mathbf{A}$. First, if $r=0$ so that $\mathbf{A}$ is the empty sequence, then the only choice of $\mathbf{C}$ is $\mathbf{C}=(M)$. Thus
	\begin{align*}
		\LHS\eqref{eq:aux-sum-2} &= \big(M!\big)^2 \binom{X+M-1}{M} \binom{-Y+M}{M}\\
		& = \prod_{n=0}^{M-1} (X+M-1-n) \prod_{n=0}^{M-1} (-Y+M-n),
	\end{align*}
	which is exactly the right-hand side of \eqref{eq:aux-sum-2}.
	
	Now let us assume that the identity is true for all sequences $\mathbf{A}$ of length $0,\ldots,r-1$ for a certain $r\ge 1$, and we want to prove the relation for an arbitrary sequence $\mathbf{A}=(A_{1},\ldots,A_{r})$. Our starting point is to single out the summation on $C_1$, so as to get
	\begin{align*}
		&\LHS\eqref{eq:aux-sum-2} \\
		&\quad= \sum_{C_1}(-1)^{C_1} \big(C_1!\big)^2 \binom{A_1}{C_1} \binom{X+Y+M-2-A_1}{C_1}\\
		&\quad\quad \times \sum_{C_2,\ldots,C_{r+1}}\big(C_{r+1}!\big)^2 \binom{X+C_{r+1}-1}{C_{r+1}}\binom{-Y+M}{C_{r+1}}\\
		&\quad\quad \times \prod_{i=2}^{r} (-1)^{C_i}\big(C_i!\big)^2 \scalebox{0.8}{%
			$\left(\begin{array}{c}
				\sum\limits_{j=1}^i A_j - \sum\limits_{j=1}^{i-1} C_j\\[12pt]
				C_i
			\end{array}\right)
			\left(\begin{array}{c}
				X+Y+M-2i-\sum\limits_{j=1}^i A_j - \sum\limits_{j=1}^{i-1} C_j\\[12pt]
				C_i
			\end{array}\right)$
		}.
	\end{align*}
	For the inner summation, we may apply the inductive assumption for the sequence of length $r-1$:
	\begin{align*}
		\widehat{\mathbf{A}} = (A_1+A_2-C_1,A_3,A_4,\ldots, A_r).
	\end{align*}
	Furthermore, we make the following change of variables:
	\begin{align*}
		r &\mapsto r-1,\\
		M &\mapsto M-C_1-1,\\
		Y &\mapsto Y-C_1-1.
	\end{align*}
	Therefore,
	\begin{align*}
		\sum_{C_2,\ldots,C_{r+1}} &= (-1)^{M-r-C_1} \prod_{n=0}^{M-C_1-2}(X+n) \prod_{i=2}^r \scalebox{0.65}{$\dfrac{1}{X+M-\sum\limits_{j=1}^i (A_{j}+1)}$}\\
		&\quad\times \prod_{n=0}^{M-C_1-2}(Y-C_1-2-n) \prod_{i=2}^r \scalebox{0.65}{$\dfrac{1}{Y-\sum\limits_{j=1}^i (A_{j}+1)}$},
	\end{align*}
	which further gives us
	\begin{align*}
		\LHS\eqref{eq:aux-sum-2} &= (-1)^{M-r}\prod_{i=2}^r \scalebox{0.65}{$\dfrac{1}{X+M-\sum\limits_{j=1}^i (A_{j}+1)}$}\prod_{i=2}^r \scalebox{0.65}{$\dfrac{1}{Y-\sum\limits_{j=1}^i (A_{j}+1)}$}\\
		&\quad\times \sum_{C_1} \big(C_1!\big)^2 \binom{A_1}{C_1} \binom{X+Y+M-2-A_1}{C_1}\\
		&\quad\times\prod_{n=0}^{M-C_1-2}(X+n)\prod_{n=0}^{M-C_1-2}(Y-C_1-2-n).
	\end{align*}
	Now we are left to show that the above equals
	\begin{align*}
		(-1)^{M-r} \prod_{n=0}^{M-1}(X+n) \prod_{i=1}^r \scalebox{0.65}{$\dfrac{1}{X+M-\sum\limits_{j=1}^i (A_{j}+1)}$} \prod_{n=0}^{M-1}(Y-1-n) \prod_{i=1}^r \scalebox{0.65}{$\dfrac{1}{Y-\sum\limits_{j=1}^i (A_{j}+1)}$},
	\end{align*}
	or equivalently,
	\begin{align}\label{eq:aux-sum-2-step}
		&\frac{(X+M-1)(Y-1)}{(X+M-A_1-1)(Y-A_1-1)}\\
		&\qquad\qquad=\sum_{C_1\ge 0} \big(C_1!\big)^2 \binom{A_1}{C_1} \binom{X+Y+M-2-A_1}{C_1}\notag\\
		&\qquad\qquad\quad\times\prod_{n=M-C_1-1}^{M-2}(X+n)\prod_{n=2}^{B_1+1}(Y-n). \notag
	\end{align}
	Note that
	\begin{align*}
		\RHS\eqref{eq:aux-sum-2-step} &= \sum_{C_1\ge 0} \frac{(-1)^{C_1} (-A_1)_{C_1} (-1)^{C_1} (-X-Y-M+2+A_1)_{C_1}}{(-1)^{C_1} (-X-M+2)_{C_1} (-1)^{C_1} (-Y+2)_{C_1}}\\
		&= {}_{3}F_{2}\left(\begin{matrix}
			-A_1,-X-Y-M+2+A_1,1\\
			-Y+2,-X-M+2
		\end{matrix};1\right)\\
		\text{\tiny (by \eqref{eq:P-S})}&= \frac{(X+M-A_1)_{A_1} (-Y+1)_{A_1}}{(-Y+2)_{A_1} (X+M-A_1-1)_{A_{1}}}\\
		&= \frac{(X+M-1)(-Y+1)}{(X+M-A_1-1)(-Y+A_1+1)}.
	\end{align*}
	The above is the same as the left-hand side of \eqref{eq:aux-sum-2-step}, thereby completing the proof.
\end{proof}
\subsection*{Acknowledgements}

The authors would like to thank Jeffrey Case for useful discussions. ZY is supported by a Simons Travel Grant.

\bibliography{bib}

@preamble { "\newcommand{\noopsort}[1]{} " }

@book {AndrewsAskeyRoy1999,
    AUTHOR = {Andrews, George E. and Askey, Richard and Roy, Ranjan},
     TITLE = {Special functions},
    SERIES = {Encyclopedia of Mathematics and its Applications},
    VOLUME = {71},
 PUBLISHER = {Cambridge University Press, Cambridge},
      YEAR = {1999},
     PAGES = {xvi+664},
      ISBN = {0-521-62321-9; 0-521-78988-5},
   MRCLASS = {33-01 (33-02)},
  MRNUMBER = {1688958},
MRREVIEWER = {Bruce\ C.\ Berndt},
       DOI = {10.1017/CBO9781107325937},
       URL = {https://doi.org/10.1017/CBO9781107325937},
}

@article {BaileyEastwoodGraham1994,
    AUTHOR = {Bailey, Toby N. and Eastwood, Michael G. and Graham, C. Robin},
     TITLE = {Invariant theory for conformal and {CR} geometry},
   JOURNAL = {Ann. of Math. (2)},
  FJOURNAL = {Annals of Mathematics. Second Series},
    VOLUME = {139},
      YEAR = {1994},
    NUMBER = {3},
     PAGES = {491--552},
      ISSN = {0003-486X},
   MRCLASS = {53A30 (32C16 53A55)},
  MRNUMBER = {1283869},
MRREVIEWER = {William M. McGovern},
       DOI = {10.2307/2118571},
       URL = {https://doi.org/10.2307/2118571},
}

@article {CaseLinYuan2022or,
    AUTHOR = {Case, Jeffrey S. and Lin, Yueh-Ju and Yuan, Wei},
     TITLE = {Curved versions of the {O}vsienko--{R}edou operators},
   JOURNAL = {Int. Math. Res. Not. IMRN},
  FJOURNAL = {International Mathematics Research Notices. IMRN},
      YEAR = {2023},
    NUMBER = {19},
     PAGES = {16904--16929},
      ISSN = {1073-7928,1687-0247},
   MRCLASS = {53C18 (58J40)},
  MRNUMBER = {4651902},
       DOI = {10.1093/imrn/rnad053},
       URL = {https://doi.org/10.1093/imrn/rnad053},
}

@unpublished {CaseYan2024,
    AUTHOR = {Case, Jeffrey S. and Yan, Zetian},
     TITLE = {Formal self-adjointness of a family of conformally invariant bidifferential operators},
      YEAR = {preprint},
      NOTE = {arXiv:2405.09532},
}

@book {FeffermanGraham2012,
    AUTHOR = {Fefferman, Charles and Graham, C. Robin},
     TITLE = {The ambient metric},
    SERIES = {Annals of Mathematics Studies},
    VOLUME = {178},
 PUBLISHER = {Princeton University Press},
   ADDRESS = {Princeton, NJ},
      YEAR = {2012},
     PAGES = {x+113},
      ISBN = {978-0-691-15313-1},
   MRCLASS = {53Axx},
  MRNUMBER = {2858236},
}

@article {FeffermanGraham2013,
    AUTHOR = {Fefferman, Charles and Graham, C. Robin},
     TITLE = {Juhl's formulae for {GJMS} operators and {$Q$}-curvatures},
   JOURNAL = {J. Amer. Math. Soc.},
  FJOURNAL = {Journal of the American Mathematical Society},
    VOLUME = {26},
      YEAR = {2013},
    NUMBER = {4},
     PAGES = {1191--1207},
      ISSN = {0894-0347},
   MRCLASS = {53A30 (53A55)},
  MRNUMBER = {3073887},
MRREVIEWER = {Fr{\'e}d{\'e}ric Robert},
       DOI = {10.1090/S0894-0347-2013-00765-1},
       URL = {http://dx.doi.org/10.1090/S0894-0347-2013-00765-1},
}

@article {GrahamZworski2003,
    AUTHOR = {Graham, C. Robin and Zworski, Maciej},
     TITLE = {Scattering matrix in conformal geometry},
   JOURNAL = {Invent. Math.},
  FJOURNAL = {Inventiones Mathematicae},
    VOLUME = {152},
      YEAR = {2003},
    NUMBER = {1},
     PAGES = {89--118},
      ISSN = {0020-9910},
     CODEN = {INVMBH},
   MRCLASS = {58J50},
  MRNUMBER = {1965361},
MRREVIEWER = {Andrew W. Hassell},
       DOI = {10.1007/s00222-002-0268-1},
       URL = {http://dx.doi.org/10.1007/s00222-002-0268-1},
}

@book {Juhl2009,
    AUTHOR = {Juhl, Andreas},
     TITLE = {Families of conformally covariant differential operators,
              {$Q$}-curvature and holography},
    SERIES = {Progress in Mathematics},
    VOLUME = {275},
 PUBLISHER = {Birkh\"auser Verlag, Basel},
      YEAR = {2009},
     PAGES = {xiv+488},
      ISBN = {978-3-7643-9899-6},
   MRCLASS = {58J60 (11M36 35P25 53A30 53C21 53C25)},
  MRNUMBER = {2521913},
MRREVIEWER = {A. Rod Gover},
       DOI = {10.1007/978-3-7643-9900-9},
       URL = {http://dx.doi.org/10.1007/978-3-7643-9900-9},
}

@article {Juhl2013,
    AUTHOR = {Juhl, Andreas},
     TITLE = {Explicit formulas for {GJMS}-operators and {$Q$}-curvatures},
   JOURNAL = {Geom. Funct. Anal.},
  FJOURNAL = {Geometric and Functional Analysis},
    VOLUME = {23},
      YEAR = {2013},
    NUMBER = {4},
     PAGES = {1278--1370},
      ISSN = {1016-443X},
   MRCLASS = {53A30 (05A19 35R01 53A55 53B20 53C25 58J50)},
  MRNUMBER = {3077914},
MRREVIEWER = {A. Rod Gover},
       DOI = {10.1007/s00039-013-0232-9},
       URL = {http://dx.doi.org/10.1007/s00039-013-0232-9},
}

@article {Juhl2016,
    AUTHOR = {Juhl, Andreas},
     TITLE = {Heat kernel expansions, ambient metrics and conformal
              invariants},
   JOURNAL = {Adv. Math.},
  FJOURNAL = {Advances in Mathematics},
    VOLUME = {286},
      YEAR = {2016},
     PAGES = {545--682},
      ISSN = {0001-8708,1090-2082},
   MRCLASS = {58J35 (53A30)},
  MRNUMBER = {3415692},
MRREVIEWER = {Ivan\ G.\ Avramidi},
       DOI = {10.1016/j.aim.2015.09.015},
       URL = {https://doi.org/10.1016/j.aim.2015.09.015},
}

@unpublished {MazumdarVetois2020,
    AUTHOR = {Mazumdar, Saikat and V{\'e}tois, J{\'e}r{\^o}me},
     TITLE = {Existence results for the higher-order {$Q$}-curvature equation},
      YEAR = {\noopsort{2501}preprint},
      NOTE = {arXiv:2007.10180},
}

@article {OvsienkoRedou2003,
    AUTHOR = {Ovsienko, V. and Redou, P.},
     TITLE = {Generalized transvectants-{R}ankin-{C}ohen brackets},
   JOURNAL = {Lett. Math. Phys.},
  FJOURNAL = {Letters in Mathematical Physics},
    VOLUME = {63},
      YEAR = {2003},
    NUMBER = {1},
     PAGES = {19--28},
      ISSN = {0377-9017},
   MRCLASS = {58J70 (53A30)},
  MRNUMBER = {1967533},
MRREVIEWER = {Sofiane Bouarroudj},
       DOI = {10.1023/A:1022956710255},
       URL = {https://doi.org/10.1023/A:1022956710255},
}

@article {Paneitz1983,
    AUTHOR = {Paneitz, Stephen M.},
     TITLE = {A quartic conformally covariant differential operator for
              arbitrary pseudo-{R}iemannian manifolds (summary)},
   JOURNAL = {SIGMA Symmetry Integrability Geom. Methods Appl.},
  FJOURNAL = {SIGMA. Symmetry, Integrability and Geometry. Methods and
              Applications},
    VOLUME = {4},
      YEAR = {2008},
     PAGES = {Paper 036, 3},
      ISSN = {1815-0659},
   MRCLASS = {58J60 (58J70)},
  MRNUMBER = {2393291},
MRREVIEWER = {John Urbas},
       DOI = {10.3842/SIGMA.2008.036},
       URL = {http://dx.doi.org/10.3842/SIGMA.2008.036},
}

\end{document}